\newcommand{\form}[1]{{\langle #1 \rangle }}
\newcommand{\pfister}[1]{{\langle \! \langle #1 \rangle \! \rangle}}
\newcommand{\mydim}[1]{{\mathrm{dim}\!\; #1}}
\newcommand{\witti}[2]{{\mathfrak{i}_{#1}(#2)}}
\newcommand{\anispart}[1]{{#1_{\mathrm{an}}}}
\newcommand{\normform}[1]{{#1_{\mathrm{nor}}}}
\newtheorem{theorem}{Theorem}[section]
\newtheorem{lemma}[theorem]{Lemma}
\newtheorem{proposition}[theorem]{Proposition}
\newtheorem{corollary}[theorem]{Corollary}
\newtheorem{conjecture}[theorem]{Conjecture}
\theoremstyle{definition}
\newtheorem{definition}[theorem]{Definition}
\newtheorem{example}[theorem]{Example}
\newtheorem{problem}[theorem]{Problem}
\theoremstyle{remark}
\newtheorem{remark}[theorem]{Remark}
\newtheorem{remarks}[theorem]{Remarks}
\numberwithin{equation}{section}
\begin{document}

\title{Quasilinear quadratic forms and function fields of quadrics}
\author{Stephen Scully}
\address{Department of Mathematical and Statistical Sciences, University of Alberta, Edmonton AB T6G 2G1, Canada}
\email{stephenjscully@gmail.com}

\subjclass[2010]{11E04, 14E05}
\keywords{Quasilinear quadratic forms, function fields of quadrics, isotropy indices}

\maketitle

\begin{abstract} Let $p$ and $q$ be anisotropic quadratic forms of dimension $\geq 2$ over a field $F$. In a recent article, we formulated a conjecture describing the general constraints which the dimensions of $p$ and $q$ impose on the isotropy index of $q$ after scalar extension to the function field of $p$. This can be viewed as a generalization of Hoffmann's Separation Theorem which simultaneously incorporates and refines some well-known classical results on the Witt kernels of function fields of quadrics. Using algebro-geometric methods, it was shown that large parts of this conjecture hold in the case where the characteristic of $F$ is not 2. In the present article, we prove similar (in fact, slightly stronger) results in the case where $F$ has characteristic $2$ and $q$ is a so-called \emph{quasilinear} form. In contrast to the situation where $\mathrm{char}(F) \neq 2$, the methods used to treat this case are purely algebraic. \end{abstract}

\section{Introduction} \label{secintro} Let $F$ be a field, let $p$ and $q$ be a pair of anisotropic quadratic forms of dimension $\geq 2$ over $F$, and let $F(p)$ be the function field of the integral projective $F$-quadric defined by the vanishing of $p$. The problem of understanding the isotropy behaviour of $q$ after scalar extension to the field $F(p)$ is one which lies at the heart of many of the central problems in the algebraic theory of quadratic forms. Let $\witti{0}{q_{F(p)}}$ denote the \emph{isotropy index} (i.e., the maximal dimension of a totally isotropic subspace) of $q$ extended to $F(p)$. In \cite{Scully3}, we formulated following conjecture which aims to describe the general constraints which the dimensions of $p$ and $q$ impose on the integer $\witti{0}{q_{F(p)}}$:
\begin{conjecture}\label{mainconj} Let $p$ and $q$ be anisotropic quadratic forms of dimension $\geq 2$ over a field $F$, and let $s$ be the unique non-negative integer such that $2^s < \mathrm{dim}(p) \leq 2^{s+1}$. Set $k = \mathrm{dim}(q) - 2 \witti{0}{q_{F(p)}}$. Then $k \geq 0$ and
$$ \mathrm{dim}(q)  = a2^{s+1} + \epsilon $$
for some non-negative integer $a$ and integer $-k \leq \epsilon \leq k$.  \end{conjecture}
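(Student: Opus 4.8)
The plan is to reformulate the statement as a sharp lower bound for $\mydim{\anispart{(q_{F(p)})}}$, reduce to the case where $p$ is itself quasilinear, and then prove that bound by induction using the dictionary between anisotropic quasilinear forms over $F$ and finite-dimensional $F^2$-subspaces of $F$; throughout, for a quasilinear form $\phi$ I write $\widetilde\phi$ for its value set (an $F^2$-subspace of $F$, of dimension $\mydim\phi$ when $\phi$ is anisotropic). Since $q$ is quasilinear, so is $q_{F(p)}$, and a maximal totally isotropic subspace of a quasilinear form is precisely a linear complement of its anisotropic part; hence $\witti{0}{q_{F(p)}} = \mydim q - \mydim{\anispart{(q_{F(p)})}}$, so that $k = 2\,\mydim{\anispart{(q_{F(p)})}} - \mydim q$. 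Writing $\mydim q = a_0 2^{s+1} + r$ with $0 \le r < 2^{s+1}$, a short computation shows that the full assertion (the inequality $k \ge 0$ included) is equivalent to the single inequality
$$ \mydim{\anispart{(q_{F(p)})}} \ \ge\ a_0 2^s + \min(r,\,2^s) . $$

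First I would reduce to the case where $p$ is quasilinear; when the bilinear part of $p$ is nonzero this needs a separate analysis of the function field of $p$, built on the Hoffmann--Laghribi comparison of $F(p)$ with the function field of the quasilinear part of $p$, which I will not detail. So assume $p = \form{b_1,\dots,b_m}$ is quasilinear. Dehomogenising its defining equation and solving for one coordinate exhibits $F(p)$ as $L(\sqrt e)$, where $L = F(x_2,\dots,x_{m-1})$ is rational over $F$ of transcendence degree $m-2$ and $e = b_1^{-1}\big(b_2 x_2^2 + \cdots + b_{m-1}x_{m-1}^2 + b_m\big) \in L$. Over $L$ the form $q$ remains anisotropic, and for a quadratic extension one checks directly that the value set of $q_{L(\sqrt e)}$ is $\widetilde{q_L} + e\,\widetilde{q_L}$ as an $L^2$-subspace of $L$; since $[L(\sqrt e)^2 : L^2] = 2$ this gives $\mydim{\anispart{(q_{F(p)})}} = \tfrac12\dim_{L^2}(\widetilde{q_L} + e\,\widetilde{q_L}) \ge \tfrac12\dim_{L^2}\widetilde{q_L} = \mydim q /2$, so $k \ge 0$; and, by inclusion--exclusion, $\witti{0}{q_{F(p)}} = \tfrac12\dim_{L^2}(\widetilde{q_L} \cap e\,\widetilde{q_L})$.

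It remains to prove $\dim_{L^2}(\widetilde{q_L} \cap e\,\widetilde{q_L}) \le 2a_0 2^s + 2\max(0,\,r-2^s)$. The structural point is that, as $e^2 \in L^2$, the subspace $W \coloneqq \widetilde{q_L} \cap e\,\widetilde{q_L}$ is stable under multiplication by $e$ and is therefore the largest $F(p)^2 = L^2(e)$-submodule of $L$ contained in $\widetilde{q_L}$; so $\witti{0}{q_{F(p)}} = \dim_{F(p)^2}W$. I would bound $\dim_{F(p)^2}W$ by induction on $\mydim q$, the anisotropic case being trivial. In general, let $\nu(p)$ be the norm form of $p$ (a quasi-Pfister form with $\mydim{\nu(p)} = 2^t$ for some $t \ge s+1$, equality holding precisely when $p$ is a quasi-Pfister neighbour), and let $q_1 \subseteq q$ be the subform whose value set is the largest $F^2$-subspace of $\widetilde q$ that becomes, after scaling by a value of $q$, a module over the norm field $\widetilde{\nu(p)}$; write $q \cong q_1 \perp q_2$. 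For $q_1$ one uses that $\widetilde{\nu(p)}$-divisibility forces $\mydim{q_1}$ to be a multiple of $2^t$, hence of $2^{s+1}$, that $W$ contains the value set of $q_1$ over $L$, and that $(q_1)_{F(p)}$ is split as far as possible, contributing exactly $\tfrac12\mydim{q_1}$. For the complementary summand $q_2$, which admits no further $\widetilde{\nu(p)}$-divisibility, one combines the induction hypothesis with the quasilinear analogue of Hoffmann's bound on first higher isotropy indices (for anisotropic quasilinear $\phi$ with $2^u < \mydim\phi \le 2^{u+1}$ one has $\witti{0}{\phi_{F(\phi)}} \le \mydim\phi - 2^u$) to control its contribution by $\max(0,\,r-2^s)$.

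The principal obstacle is the arithmetic bookkeeping in the inductive step: one must show that the $\nu(p)$-divisible summand $q_1$ has dimension at most $2^{s+1}a_0$ and that the residual contribution of $q_2$ is at most $\max(0,\,r-2^s)$, so that together they do not exceed $a_0 2^s + \max(0,\,r-2^s)$; it is exactly here that the hypothesis $2^s < \mydim p \le 2^{s+1}$ and the $2$-adic structure of $\mydim q$ interact non-trivially, and the boundary configurations — $\mydim p$ equal to $2^s+1$ or $2^{s+1}$, and $p$ or $q_2$ a quasi-Pfister neighbour, where the first-index inequality is an equality — will require individual treatment. A further, more routine, obstacle is the reduction to the quasilinear case mentioned above, which rests on understanding $F(p)$ when $p$ has a non-trivial non-quasilinear part.
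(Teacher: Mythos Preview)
The statement you are attempting is Conjecture~\ref{mainconj}, which the paper does \emph{not} prove in full; it remains open even in the quasilinear case. The paper establishes only partial cases---most notably $k\le 2^{s-1}$ (Theorem~\ref{mainmain})---by a completely different route: it invokes the structural result Theorem~\ref{highersubformthm} (which produces an anisotropic $\tau$ over $F(p)$ with $\anispart{(\tau\otimes p_1)}\subset\anispart{(q_{F(p)})}$ and $\mydim\tau\ge\witti{0}{q_{F(p)}}$) and then runs an induction over the Knebusch splitting tower of $p$ via Lemma~\ref{leminductionlemma}, lowering both $\mydim p$ and the running bound on $k$ at each step. There is no induction on $\mydim q$.

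Your induction on $\mydim q$ via the decomposition $q\simeq q_1\perp q_2$, with $q_1$ the maximal $\normform{p}$-divisible part, has a genuine gap: nothing forces $q_1\neq 0$. Take $q=p$ with $2^s<\mydim p<2^{s+1}$; then $\mydim{\normform{p}}\ge 2^{s+1}>\mydim q$, so no nonzero subform of $q$ can be divisible by $\normform{p}$, whence $q_1=0$, $q_2=q$, and there is no dimension reduction at all. This is not a boundary accident: by Proposition~\ref{propquasihyp}, $\normform{p}$-divisibility of $q$ is forced only when $k=0$, whereas the substance of the conjecture lies in the range $0<k<2^s$. Even when $q_1\neq 0$, your step does not close: you would need $\witti{0}{(q_2)_{F(p)}}\le\max(0,r-2^s)$, but $\mydim{q_2}\pmod{2^{s+1}}$ bears no a priori relation to $r$, and the $\mathfrak i_1$-bound you cite concerns isotropy of a form over \emph{its own} function field, not over $F(p)$. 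What you flag as the ``principal obstacle'' is not arithmetic bookkeeping; it is the entire open content of the conjecture.

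A minor correction: the reduction to quasilinear $p$ is easier than you indicate. If $p$ is not quasilinear then $F(p)/F$ is separably generated, so the quasilinear form $q$ remains anisotropic over $F(p)$ by \cite[Prop.~5.3]{Hoffmann2}, and the assertion is vacuous (see the Remark at the end of \S\ref{secintro}).
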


\begin{remarks} \label{introrems}\begin{enumerate}[leftmargin=*] \item Note that we do not impose any assumption on the characteristic of $F$, and we permit $p$ and $q$ to be degenerate in the characteristic $2$ case.
\item The assertion is trivially true if $k \geq 2^s -1$, so we are interested in the case where $k \leq 2^s - 2$.
\item If $q$ is non-degenerate, then $\witti{0}{q_{F(p)}}$ is equal to the \emph{Witt index} of $q_{F(p)}$, i.e.,  half the dimension of its hyperbolic part. For degenerate forms, however, one has to distinguish between the isotropy index and Witt index. The reader is warned that our notation differs from \cite{EKM}, where $\mathfrak{i}_0$ is used to denote the Witt index.
\item If $q$ is non-degenerate, then the integer $k$ is just the dimension of the anisotropic part of $q_{F(p)}$. If $q$ is degenerate (in which case $\mathrm{char}(F) = 2$), this need not be true. Nevertheless, it is easy to check that $k \geq 0$ in all cases (see Lemma \ref{lempositived} below). \end{enumerate} \end{remarks}

It is not hard to see that Conjecture \ref{mainconj} is optimal, to the extent that there can be no further gaps in the possible values of $\mathrm{dim}(q)$ determined by $\witti{0}{q_{F(p)}}$ and $\mathrm{dim}(p)$ alone. Simple examples are given in \cite[Ex. 1.5]{Scully3} and Example \ref{exoptimal} below. From one point of view, the rough content of the conjecture is the following: The more isotropic $q$ becomes over the field $F(p)$, the closer $\mathrm{dim}(q)$ should be to being divisible by $2^{s+1}$. This does not capture the whole story, however; for example, when $\mathrm{dim}(q) \leq 2^s$, the conjecture asserts that $q$ remains anisotropic over $F(p)$, which is precisely the statement of the fundamental Separation Theorem originally discovered by Hoffmann in \cite{Hoffmann1}.

When $\mathrm{char}(F) \neq 2$, it was shown in \cite{Scully3} that Conjecture \ref{mainconj} holds if any of the following conditions are satisfied:
\begin{enumerate} \item $k < 2^{s-1}$;
\item $2^{s+1} - 2 \leq \mydim{p} \leq 2^{s+1}$;
\item $p$ is a Pfister neighbour;
\item $\mydim{p} \leq 16$;
\item $\mydim{q} \leq 2^{s+2} + 2^{s-1}$. \end{enumerate}
Perhaps most interesting here is the case where $k=0$, which tells us that if $q$ represents an element in the kernel of the natural restriction homomorphism $W(F) \rightarrow W(F(p))$ on Witt groups, then $\mathrm{dim}(q)$ is divisible by $2^{s+1}$. This new observation can be refined rather further (\cite{Scully1}), and perhaps raises some questions concerning the structure of the former kernel (about which very little is known in general).

The results of \cite{Scully3} are proved using methods from the theory of algebraic cycles, with the decisive tool being the action of Steenrod operations on the mod-2 Chow rings of certain smooth projective varieties. While the same ideas should, in principle, permit to produce analogous results in the case where $\mathrm{char}(F) = 2$ and $p$ and $q$ are non-degenerate, the absence of the mod-2 Steenrod operations in this setting renders this approach impractical at the present time. In fact, we currently have no practical approach to Conjecture \ref{mainconj} or other problems of its ilk for non-degenerate forms in characteristic $2$.\footnote{All recent advances in the characteristic $\neq 2$ which rely on the use of Steeenrod operations or algebraic cobordism theory remain open for non-degenerate forms in characteristic $2$.} The purpose of the present article is therefore to deal with a special class of degenerate forms in characteristic $2$ known as \emph{quasilinear} quadratic forms. Recall here that a \emph{quasilinear} quadratic form over a field of characteristic $2$ is one which is diagonalizable, i.e., isometric to a form of the shape $a_1x_1^2 + a_2 x_2^2 + \cdots + a_nx_n^2$, the $a_i$ being scalars in the field of definition. In characteristic $2$, the projective quadric defined by the vanishing of such a form is \emph{totally singular}, in the sense that it has no smooth points at all. Nevertheless, it was shown by Hoffmann and Laghribi (see, e.g., \cite{HoffmannLaghribi1}, \cite{HoffmannLaghribi2}, \cite{Hoffmann2}) that many aspects of the classical algebraic theory of quadratic forms can be extended to the study of quasilinear quadratic forms in characteristic $2$. This was elaborated upon in work of Totaro (\cite{Totaro1}) and the author (\cite{Scully0,Scully1,Scully2}), where quasilinear analogues of various non-trivial results of Karpenko, Merkurjev, Vishik and others on discrete invariants of quadratic forms in characteristic $\neq 2$ were studied, and proved in many cases. The methods employed here are of a rather different and more direct nature, and it is unclear to what extent they can be adapted to treat other cases. In fact, the quasilinear case is the only case in characteristic $2$ where some of these results are known.\footnote{For example, the analogue of Karpenko's theorem on the values of the first Witt index -- see \cite{Scully1}.} 

In the present work, we continue this theme by proving the following result towards the characteristic $2$ case of Conjecture \ref{mainconj}:

\begin{theorem} Assume, in the situation of Conjecture \ref{mainconj}, that $\mathrm{char}(F) = 2$ and $q$ is quasilinear. Then, the statement of the conjecture holds in the following cases:
\begin{enumerate} \item $k \leq 2^{s-1}$.
\item $2^{s+1} - 2^{s-2}-3 \leq \mathrm{dim}(p) \leq 2^{s+1}$.
\item $p$ is a quasi-Pfister neighbour.
\item $\mathrm{dim}(p) \leq 8$.
\item $2^N \leq \mathrm{dim}(q) \leq 2^N + 2^{s+1}$ for some non-negative integer $N$. 
\item $\mathrm{dim}(q) \leq 2^{s+2} + 2^{s+1}$.  \end{enumerate}
\end{theorem}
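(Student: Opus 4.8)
The assertion of Conjecture~\ref{mainconj} can be condensed into a single inequality. Write $n = \mydim{q}$ and let $\delta(n)$ denote the distance from $n$ to the nearest multiple of $2^{s+1}$; the conjecture asserts precisely that
\[
\witti{0}{q_{F(p)}} \;\leq\; \tfrac12\big(n - \delta(n)\big),
\]
the right-hand side being automatically a non-negative multiple of $2^s$. The plan is to prove this bound. I would first reduce to the case in which $p$ is itself quasilinear, using the standard calculus of function fields of quadrics in characteristic~$2$ and the fact that weakening the isotropy of $q_{F(p)}$ only enlarges $k$ and so only weakens the inequality above; this reduction requires some care, since the quasilinear part of $p$ may have strictly smaller dimension than $p$ itself, and hence a smaller associated~$s$. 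For quasilinear $p = \form{b_1,\dots,b_m}$ there is the transparent presentation $F(p) \cong E(\sqrt{c})$, where $E = F(t_1,\dots,t_{m-2})$ is rational over~$F$ and $c = b_m^{-1}\big(b_1 + b_2 t_1^2 + \dots + b_{m-1}t_{m-2}^2\big) \in E \smallsetminus E^2$; combined with the elementary identity $\witti{0}{\varphi_L} = \mydim{\varphi} - \dim_{L^2}\!\big(\textstyle\sum_i L^2 a_i\big)$, valid for any quasilinear $\varphi = \form{a_1,\dots,a_r}$ over any extension $L/F$, this yields
\[
\witti{0}{q_{F(p)}} \;=\; \tfrac12\,\dim_{E^2}\!\big(\overline{q}_E \cap c\,\overline{q}_E\big),
\]
where $\overline{q}_E \subseteq E$ is the $n$-dimensional $E^2$-span of the diagonal entries of $q$. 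The whole problem is thereby transposed into a question of $E^2$-linear algebra: control $\dim_{E^2}\!\big(\overline{q}_E \cap c\,\overline{q}_E\big)$ subject to the constraints that the anisotropy of $q$ places on $\overline{q}_E$ and that the anisotropy of $p$ places on $c$.

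The main argument then proceeds by induction on $\mydim{q}$. Its base consists of Hoffmann's Separation Theorem --- the case $\mydim{q} \leq 2^s$, which in the reformulation above is the assertion $\overline{q}_E \cap c\,\overline{q}_E = 0$ --- together with the small-dimensional ranges~(5) and~(6), in which $\mydim{q}$ is either small (case~(6)) or close to a power of $2$ (case~(5)), and where one argues directly, combining the Separation Theorem for $F$-forms of intermediate dimension with the Hoffmann--Laghribi subform theorem for quasilinear forms (\cite{HoffmannLaghribi1, HoffmannLaghribi2}) and the known values of the first higher isotropy index of a quasilinear form (the quasilinear analogue of Karpenko's theorem, \cite{Scully1}), that $\overline{q}_E \cap c\,\overline{q}_E$ cannot be too large. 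For the inductive step, when $\mydim{q}$ exceeds the bound of~(6), I would, after routine reductions isolating the cases where $q_{F(p)}$ is only mildly isotropic, produce an $F$-subform $q_0 \subseteq q$ with $\mydim{q_0} = \mydim{q} - 2^{s+1}$ and $\witti{0}{(q_0)_{F(p)}} = \witti{0}{q_{F(p)}} - 2^s$; since $k$ is then unchanged, the inductive hypothesis applied to $q_0$ gives the conclusion for~$q$. The key point enabling this is that, for quasilinear $q$, the anisotropic part $\anispart{(q_{F(p)})}$ is itself of the form $(q')_{F(p)}$ for a subform $q' \subseteq q$ defined over $F$ --- the $E^2$-span of a maximal $F(p)^2$-independent subset of the entries of $q$ --- so that the desired $q_0$ can be sought among $F$-subforms of $q$ and located by a counting argument with the spaces $\overline{q}_E$ and $c\,\overline{q}_E$.

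The remaining items sit at or near the boundary of this method. Item~(3), with $p$ a quasi-Pfister neighbour, is the most transparent: such a $p$ and its ambient quasi-Pfister form $\pi$ (of dimension $2^{s+1}$) have the same effect on the isotropy of quasilinear forms over their function fields, so one may take $p = \pi$; here $F(\pi)^2 \cap F$ is exactly the value field of $\pi$, the anisotropic part of $q_{F(\pi)}$ was determined completely by Hoffmann and Laghribi, and the predicted dimension count then follows via the quasilinear subform theorem. Item~(4) is absorbed into item~(1), since $\mydim{p} \leq 8$ forces $s \leq 2$ and hence $k \leq 2^{s-1}$ in every non-trivial case. Item~(2) exploits the fact that $\mydim{p}$ is then within $2^{s-2}+3$ of the power $2^{s+1}$, making $p$ a ``large'' quasi-Pfister neighbour and permitting a comparison with item~(3). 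Item~(1) --- the sharpest of the six, and the one improving (by the single value $k = 2^{s-1}$) on the corresponding characteristic~$\neq 2$ result --- demands the most delicate version of the counting argument of the previous paragraph, extracting from the hypothesis $k \leq 2^{s-1}$ the maximal possible structural information about $\overline{q}_E$.

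The principal obstacle, I expect, is precisely the descent step in the induction: recovering, from the single integer $\witti{0}{q_{F(p)}}$, enough information about $q$ over $F$ to split off a $2^{s+1}$-dimensional subform and recurse. In characteristic~$\neq 2$ this is exactly the point at which the action of Steenrod operations on mod-$2$ Chow rings was brought to bear in \cite{Scully3}; no such instrument is available here, so the full weight of the argument must rest on the combinatorics of $E^2$-subspaces of $E$ and on the value-set calculus for quasilinear forms. That substitute is strong enough for the ranges listed --- indeed, as the abstract records, for slightly more than its characteristic~$\neq 2$ counterpart --- but it degrades as $\mydim{p}$ moves away from a power of~$2$, which is the source of the restrictions in items~(1) and~(2); removing them altogether appears to require a genuinely new idea.
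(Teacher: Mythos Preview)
Your reformulation of the conjecture and the $E^2$-linear algebra translation (the identity $\witti{0}{q_{F(p)}} = \tfrac12\dim_{E^2}(\overline{q}_E \cap c\,\overline{q}_E)$) are correct and useful. However, the overall strategy diverges substantially from the paper's, and the central step of your plan has a genuine gap.

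Your proof rests on an induction on $\mydim{q}$, and specifically on producing an $F$-subform $q_0 \subset q$ of codimension $2^{s+1}$ with $\witti{0}{(q_0)_{F(p)}} = \witti{0}{q_{F(p)}} - 2^s$. You describe this as achievable ``by a counting argument with the spaces $\overline{q}_E$ and $c\,\overline{q}_E$'', but no such argument is supplied, and I do not see one. The difficulty is precisely that $q_0$ must be defined over $F$, not merely over $E$: you need an $F^2$-subspace of $\overline{q}_F$ whose $E^2$-extension meets $c\,\overline{q}_E$ in a space of prescribed dimension, and the element $c$ is transcendental over $F$. Nothing in the ambient linear algebra forces such an $F$-rational subspace to exist, and indeed this descent step is exactly where the problem's real content lies. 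You acknowledge this as ``the principal obstacle'', but the proposal does not indicate how to overcome it; the paper does not attempt anything of this kind.

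Your treatment of item~(2) is also incorrect: having $\mydim{p}$ close to $2^{s+1}$ does \emph{not} make $p$ a quasi-Pfister neighbour (a generic form of dimension $2^{s+1}-1$ is not one), so there is no direct comparison with item~(3). In the paper, (2) is obtained from an inequality (Corollary~\ref{cortomain}) which in turn depends on the structure of the Knebusch splitting tower of $p$ via Theorems~\ref{thmmaxsplitting} and~\ref{thmdivisibilityindices}.

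The paper's actual method runs orthogonally to yours: it inducts not on $\mydim{q}$ but over the Knebusch splitting tower of $p$, replacing the pair $(F,p)$ by $(F(p),p_1)$ and tracking how the bound on $k$ degrades. The engine is Theorem~\ref{highersubformthm} (from \cite{Scully2}), which produces an auxiliary anisotropic form $\tau$ over $F(p)$ with $\mydim{\tau} = \witti{0}{q_{F(p)}}$ and $\anispart{(\tau \otimes p_1)} \subset \anispart{(q_{F(p)})}$; one then controls $d(\tau)$ and $d(\anispart{(\tau\otimes p_1)})$ over $F(p)(p_1)$ via the technical Lemma~\ref{RefinedDlem}, and feeds this into the reformulation of Lemma~\ref{lemreformconj}. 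Items (5) and (6) are handled by a separate argument combining Theorem~\ref{highersubformthm} with the Separation Theorem, again without any induction on $\mydim{q}$.
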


The reader will note that the results proved here are very similar (in fact, slightly stronger) than those proved in characteristic $\neq 2$ using completely different methods (\cite{Scully3}). The proofs for the first three cases are given in \S \ref{secmainresults} below, and for the last three in \S \ref{secbeyond}. In all cases, the basic tool is \cite[Thm. 6.6]{Scully2}, which seems to have many interesting applications to the study of quasilinear quadratic forms (see \cite[\S 6]{Scully2}). 

Before proceeding, we make the following important remark:

\begin{remark} Suppose that $\mathrm{char}(F) = 2$. If, in the situation of Conjecture \ref{mainconj}, $q$ is quasilinear, then we can assume that $p$ is quasilinear as well. Indeed, if $p$ is not quasilinear, then the quadric $\lbrace p=0 \rbrace$ is generically smooth, and so the extendion $F(p)/F$ is separably generated. By \cite[Prop. 5.3]{Hoffmann2}, it follows that $q$ remains anisotropic over $F(p)$, and so the statement of the conjecture holds trivially. When we speak of the \emph{quasilinear case of Conjecture \ref{mainconj}}, we shall therefore mean the case where \emph{both} $p$ and $q$ are quasilinear.\end{remark}

\noindent {\bf Conventions.} All quadratic forms considered in this paper are finite-dimensional. Non-degeneracy and regularity of quadratic forms are defined as in \cite[pp. 42-43]{EKM}.

\section{Recollections on quasilinear quadratic forms}

For the rest of the paper, we fix an arbitrary field $F$ of characteristic $2$. Let $\varphi$ be a quasilinear quadratic form over $F$. The $F$-vector space on which $\varphi$ is defined will be denoted $V_\varphi$. Given a field extension $L/F$, we write $\varphi_L$ for the unique quasilinear quadratic form on $V_\varphi \otimes_F L$ which restricts to $\varphi$ on $V_{\varphi}$. If $\psi$ is another quasilinear quadratic form over $F$, then we say that $\psi$ is a \emph{subform} of (resp. is \emph{isomorphic} to) $\varphi$ if there exists an injective (resp. bijective) $F$-linear map $f \colon V_\psi \rightarrow V_\varphi$ such that $\varphi\big(f(v)\big) = \psi(v)$ for all $v \in V_\psi$; in this case, we write $\psi \subset \varphi$ (resp. $\psi \simeq \varphi$). If $\psi \simeq a\psi$ for some $a \in F^*$, then we say that $\psi$ and $\varphi$ are \emph{similar}. The \emph{orthogonal sum} $\psi \perp \varphi$ and \emph{tensor product} $\psi \otimes \varphi$ are defined in the obvious way. We say that $\varphi$ is \emph{divisible} by $\psi$ if it is isomorphic to the tensor product of $\psi$ and another quasilinear quadratic form over $F$. If $a_1,\hdots,a_n \in F$, then we write $\form{a_1,\hdots,a_n}$ for the quasilinear quadratic form $a_1x_1^2 + \cdots + a_nx_n^2$ on the $F$-vector space $F^{\oplus n}$. Every quasilinear quadratic form over $F$ is isomorphic to one of this type. 

From now on, the terms ``quasilinear quadratic form'' and ``form'' will be used interchangeably. For the reader's convenience, we now quickly review some basic concepts and results which will be needed in the main part of the paper. Detailed introductions to the theory of quasilinear quadratic forms may be found in \cite{Hoffmann2} and \cite{Scully1}, and the unfamiliar reader is referred to these articles for further information.

\subsection{Isotropic decomposition} Let $\varphi$ be as above, and let $W \subset V_\varphi$ be the set of all isotropic vectors in $V_\varphi$, i.e., the set of all vectors $v \in V_\varphi$ such that $\varphi(v) = 0$. Since $\varphi$ is quasilinear, $W$ is an $F$-linear subspace of $V_\varphi$. Its dimension is the precisely the isotropy index  $\witti{0}{\varphi}$ described in \S \ref{secintro}. The restriction of $\varphi$ to the quotient space $V_\varphi/W$ is called the \emph{anisotropic part} of $\varphi$, and is denoted $\anispart{\varphi}$. Note that we have 
$$ \mathrm{dim}(\anispart{\varphi}) =  \mathrm{dim}(\varphi) - \witti{0}{\varphi} $$
(as opposed to the more familiar formula $\mathrm{dim}(\anispart{\varphi}) = \mathrm{dim}(\varphi) -2 \witti{0}{\varphi}$ from the theory of non-degenerate quadratic forms).

\subsection{The representation theorem} We write $D(\varphi)$ for the set $\lbrace \varphi(v)\;|\;v \in V_\varphi \rbrace$ of all elements of $F$ which are represented by $\varphi$. In this setting, the Cassels-Pfister representation theorem (\cite[Thm. 17.12]{EKM}) can be restated as follows:

\begin{proposition}[{see \cite[Prop. 2.6]{Hoffmann2}}] \label{propsubformtheorem} Let $\psi$ and $\varphi$ be quasilinear quadratic forms over $F$. Then $\anispart{\psi} \subset \anispart{\varphi}$ if and only if $D(\psi) \subset D(\varphi)$. In particular, $\anispart{\psi} \simeq \anispart{\varphi}$ if and only if $D(\psi) = D(\varphi)$.  \end{proposition}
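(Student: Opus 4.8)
The plan is to bypass the classical Cassels--Pfister theorem and argue directly, exploiting the fact that in characteristic $2$ a quasilinear form $\varphi$ is \emph{additive}: since its polar bilinear form vanishes identically, $\varphi(v+w) = \varphi(v) + \varphi(w)$ for all $v,w \in V_\varphi$. The first observation I would record is that this forces $D(\varphi)$ to be an $F^2$-linear subspace of $F$. Indeed, writing $\varphi \simeq \form{a_1,\dots,a_n}$ and writing each $\lambda \in F^2$ as $\mu^2$ with $\mu \in F$, one gets $D(\varphi) = F^2a_1 + \cdots + F^2a_n$, with $\mathrm{dim}_{F^2}D(\varphi) \leq n$ and equality exactly when $\varphi$ is anisotropic. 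A companion observation, proved by the same additivity together with the homogeneity relation $\varphi(tv) = t^2\varphi(v)$, is that an anisotropic quasilinear form is determined up to isomorphism by the subspace $D(\varphi)$: it is isomorphic to $\form{c_1,\dots,c_m}$ for \emph{any} $F^2$-basis $c_1,\dots,c_m$ of $D(\varphi)$, because passing between two $F^2$-bases of $D(\varphi)$ lifts, via a choice of square roots of the entries of the (invertible) transition matrix, to an invertible $F$-linear change of coordinates on $V_\varphi$.

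Next I would record the reduction to anisotropic forms. Decomposing $V_\varphi = U \oplus W$ with $W$ the totally isotropic subspace of $V_\varphi$, additivity gives $\varphi(u+w) = \varphi(u)$ for $u \in U$, $w \in W$, while $\varphi|_U$ is anisotropic and hence isomorphic to $\anispart{\varphi}$; thus $D(\varphi) = D(\anispart{\varphi})$, and likewise for $\psi$. With this in hand the forward implication is immediate: $\anispart{\psi} \subset \anispart{\varphi}$ gives $D(\psi) = D(\anispart{\psi}) \subseteq D(\anispart{\varphi}) = D(\varphi)$. For the converse, assume $D(\psi) \subseteq D(\varphi)$; replacing $\psi$ and $\varphi$ by their anisotropic parts (legitimate by the reduction), I may assume both anisotropic and must show $\psi \subset \varphi$. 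I would choose an $F^2$-basis $b_1,\dots,b_m$ of $D(\psi)$ and extend it --- using finite-dimensionality of $D(\varphi)$ over $F^2$ --- to an $F^2$-basis $b_1,\dots,b_m,d_1,\dots,d_r$ of $D(\varphi)$. By the companion observation, $\psi \simeq \form{b_1,\dots,b_m}$ and $\varphi \simeq \form{b_1,\dots,b_m,d_1,\dots,d_r} \simeq \psi \perp \form{d_1,\dots,d_r}$, so $\psi \subset \varphi$.

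The ``in particular'' clause then follows formally: $D(\psi) = D(\varphi)$ yields $\anispart{\psi} \subset \anispart{\varphi}$ and $\anispart{\varphi} \subset \anispart{\psi}$, and since $\mathrm{dim}\,\anispart{\psi} = \mathrm{dim}_{F^2}D(\psi) = \mathrm{dim}_{F^2}D(\varphi) = \mathrm{dim}\,\anispart{\varphi}$, a dimension count upgrades these inclusions to an isomorphism; the reverse implication is trivial. I do not expect a genuine obstacle here: the whole content is the translation of the representation problem into $F^2$-linear algebra, which rests entirely on the additivity of quasilinear forms in characteristic $2$ --- precisely the feature absent from the non-degenerate theory, where the polar form is nondegenerate and the analogous subform theorem is substantially deeper. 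The only points needing a little care are the two companion observations of the first paragraph (that $D(\varphi)$ is an $F^2$-subspace of $F$, and that an anisotropic quasilinear form is recovered from it), both of which are short computations with additivity and homogeneity; alternatively one could deduce the statement by transcribing \cite[Thm.~17.12]{EKM} directly, but the self-contained argument above seems more transparent.
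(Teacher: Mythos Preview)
Your proof is correct. The paper does not actually supply its own argument for this proposition: it records the statement with a reference to \cite[Prop.~2.6]{Hoffmann2} and the remark that it is a restatement of the Cassels--Pfister representation theorem \cite[Thm.~17.12]{EKM}. Your self-contained treatment---identifying $D(\varphi)$ with the $F^2$-span of the diagonal entries, noting that an anisotropic quasilinear form is determined up to isometry by this $F^2$-subspace, and then extending an $F^2$-basis---is exactly the standard direct proof in the quasilinear setting (and is essentially what one finds in Hoffmann's paper). The lift of the $F^2$-linear transition matrix to an $F$-linear isometry via square roots, with invertibility checked through Frobenius on the determinant, is handled correctly. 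There is nothing to add.
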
 

\subsection{Quasi-Pfister forms and quasi-Pfister neighbours} Let $n$ be a positive integer. We say that $\varphi$ is an \emph{$n$-fold quasi-Pfister form} if $\varphi \simeq \pfister{a_1,\hdots,a_n} :=\bigotimes_{i=1}^n \form{1,a_i}$ for some $a_1,\hdots,a_n \in F$. In this case, we have $\mathrm{dim}(\varphi) = 2^n$. For completeness, a $0$-fold quasi-Pfister form is a 1-dimensional form isomorphic to $\form{1}$. We say that $\varphi$ is a \emph{quasi-Pfister neighbour} if $\varphi$ is similar to a subform of a quasi-Pfister form $\pi$ with $\mathrm{dim}(\pi) < 2\mathrm{dim}(\varphi)$. 

\subsection{The norm form and norm degree} \label{secnormform} If $\varphi$ is non-zero, then the \emph{norm field} of $\varphi$, denoted $N(\varphi)$, is the smallest subfield of $F$ containing the set $\lbrace ab \;|\;a,b \in D(\varphi) \rbrace$. Explicitly, if $\varphi \simeq \form{a_0,a_1,\hdots,a_n}$ with $a_0 \neq 0$, then $N(\varphi) = F^2(a_0a_i\;|\;1\leq i \leq n)$. In particular, $N(\varphi)$ is a finite extension of $F^2$. The degree of this extension is called the \emph{norm degree} of $\varphi$, and is denoted $\mathrm{ndeg}(\varphi)$. Clearly $\mathrm{ndeg}(\varphi)$ is a power of $2$, and we write $\mathrm{lndeg}(\varphi)$ for the integer $\mathrm{log}_2\big(\mathrm{ndeg}(\varphi)\big)$. Up to isometry, there exists a unique anisotropic quasilinear quadratic form $\normform{\varphi}$ over $F$ such that $D(\normform{\varphi}) = N(\varphi)$ (existence is clear; uniqueness holds by Proposition \ref{propsubformtheorem}). The form $\normform{\varphi}$ is called the \emph{norm form} of $\varphi$, and it is easy to see that $\normform{\varphi}$ is in fact a quasi-Pfister form of dimension $2^{\mathrm{lndeg}(\varphi)}$. If $\varphi$ is anisotropic, then it is similar to a subform of $\normform{\varphi}$ by Proposition \ref{propsubformtheorem}. In fact, in this case, an anisotropic quasi-Pfister form over $F$ contains a subform similar to $\varphi$ if and only if it contains a subform isomorphic to $\normform{\varphi}$. This readily implies the following basic result:

\begin{lemma}[{see \cite[Cor. 3.11]{Scully1}}] \label{lemnormdegree} Let $\varphi$ be an anisotropic quasilinear quadratic form of dimension $\geq 1$ over $F$, and let $s \geq -1$ be the unique integer such that $2^s < \mathrm{dim}(\varphi) \leq 2^{s+1}$. Then $\mathrm{lndeg}(\varphi) \geq s+1$, and equality holds if and only if $\varphi$ is a quasi-Pfister neighbour.
\end{lemma}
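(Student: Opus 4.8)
The plan is to deduce everything from the properties of the norm form recorded in \S \ref{secnormform}: that $\normform{\varphi}$ is a quasi-Pfister form with $\mathrm{dim}(\normform{\varphi}) = 2^{\mathrm{lndeg}(\varphi)}$; that if $\varphi \simeq \form{a_0,\hdots,a_n}$ with $a_0 \neq 0$ then $N(\varphi) = F^2(a_0a_1,\hdots,a_0a_n)$; and that an anisotropic quasilinear form is similar to a subform of its own norm form.

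For the inequality $\mathrm{lndeg}(\varphi) \geq s+1$, I would simply observe that since $\varphi$ is anisotropic it is similar to a subform of $\normform{\varphi}$, so $\mathrm{dim}(\varphi) \leq \mathrm{dim}(\normform{\varphi}) = 2^{\mathrm{lndeg}(\varphi)}$; combined with $\mathrm{dim}(\varphi) > 2^s$ this gives $2^{\mathrm{lndeg}(\varphi)} > 2^s$, i.e.\ $\mathrm{lndeg}(\varphi) \geq s+1$.

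For the direction ``quasi-Pfister neighbour $\Rightarrow$ equality'', suppose $a\varphi$ is a subform of a quasi-Pfister form $\pi$ with $\mathrm{dim}(\pi) < 2\mathrm{dim}(\varphi)$, for some $a \in F^*$. Since $\mathrm{dim}(\pi)$ is a power of $2$ with $2^s < \mathrm{dim}(\varphi) \leq \mathrm{dim}(\pi) < 2\mathrm{dim}(\varphi) \leq 2^{s+2}$, it must equal $2^{s+1}$. I would then compare norm fields directly: for $b,c \in D(\varphi)$ one has $ab,ac \in D(\pi)$, hence $bc = a^{-2}(ab)(ac) \in N(\pi)$ (using $a^{-2} \in F^2 \subseteq N(\pi)$); as such products generate $N(\varphi)$ over $F^2$, this yields $N(\varphi) \subseteq N(\pi)$ and therefore $\mathrm{ndeg}(\varphi) \leq \mathrm{ndeg}(\pi) \leq \mathrm{dim}(\pi) = 2^{s+1}$, so $\mathrm{lndeg}(\varphi) \leq s+1$. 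With the previous paragraph this gives equality. (Alternatively, one may replace $\pi$ by its anisotropic part --- still a quasi-Pfister form containing a subform similar to $\varphi$, and necessarily of dimension exactly $2^{s+1}$ --- and invoke the statement from \S \ref{secnormform} that such a form contains a subform isomorphic to $\normform{\varphi}$, so that $\mathrm{dim}(\normform{\varphi}) \leq 2^{s+1}$.)

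For the converse, if $\mathrm{lndeg}(\varphi) = s+1$ then $\normform{\varphi}$ is a quasi-Pfister form of dimension $2^{s+1}$, the anisotropic form $\varphi$ is similar to a subform of it, and $\mathrm{dim}(\normform{\varphi}) = 2^{s+1} < 2\mathrm{dim}(\varphi)$ since $\mathrm{dim}(\varphi) > 2^s$; hence $\varphi$ is a quasi-Pfister neighbour by definition. I do not expect a genuine obstacle here: the whole argument is bookkeeping with powers of $2$ once the behaviour of the norm form under similarity and under passage to subforms (or anisotropic parts) is in hand. The only places needing a little care are the reduction to an anisotropic ambient quasi-Pfister form --- or, equivalently, the direct norm-field comparison --- in the ``if'' direction, and keeping strict versus non-strict inequalities among the relevant powers of $2$ straight.
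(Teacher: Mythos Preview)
Your proposal is correct and matches the paper's approach: the paper does not give a standalone proof but simply states that the lemma ``readily'' follows from the properties of the norm form listed immediately before it in \S\ref{secnormform} (namely that $\normform{\varphi}$ is a quasi-Pfister form of dimension $2^{\mathrm{lndeg}(\varphi)}$, that an anisotropic $\varphi$ is similar to a subform of $\normform{\varphi}$, and that an anisotropic quasi-Pfister form contains a form similar to $\varphi$ if and only if it contains $\normform{\varphi}$). Your argument spells out precisely this deduction, and your alternative route via the direct norm-field inclusion $N(\varphi)\subseteq N(\pi)$ is an equally valid way to handle the ``if'' direction.
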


\subsection{Similarity factors} We write $G(\varphi)^*$ for the set $\lbrace a \in F^*\;|\; a\varphi\simeq \varphi \rbrace$ of all \emph{similarity factors} of $\varphi$. The basic result on similarity factors in the quasilinear setting is the following:

\begin{proposition}[{see \cite[Lem. 6.3]{Hoffmann2}}] \label{propsimilarity} Let $\varphi$ be an anisotropic quasilinear quadratic form over $F$ and let $a \in F \setminus F^2$. Then the following are equivalent:
\begin{enumerate} \item $a \in G(\varphi)^*$.
\item $aD(\varphi) \subset D(\varphi)$.
\item $\varphi$ is divisible by $\pfister{a}$. 
\item $\witti{0}{\varphi_{F(\sqrt{a})}} = \frac{\mathrm{dim}(\varphi)}{2}$. 
\end{enumerate}\end{proposition}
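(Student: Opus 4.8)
The plan is to derive everything from the representation theorem (Proposition \ref{propsubformtheorem}), which says that an anisotropic quasilinear form is determined up to isometry by its value set, together with the basic observation that for anisotropic $\varphi \simeq \form{a_1,\dots,a_n}$ the set $D(\varphi)$ is an $F^2$-subspace of $F$ with $\dim_{F^2} D(\varphi) = n = \mathrm{dim}(\varphi)$ (the $a_i$ being linearly independent over $F^2$). I would prove the implications in the cycle $(1) \Rightarrow (2) \Rightarrow (3) \Rightarrow (1)$, and then establish $(2) \Leftrightarrow (4)$ separately. In all of this, the running theme is that ``$a$ is a similarity factor of $\varphi$'' should be equivalent to ``$D(\varphi)$ is closed under multiplication by $a$''.

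For $(1) \Rightarrow (2)$: isometric forms have the same value set, so $a\varphi \simeq \varphi$ forces $aD(\varphi) = D(a\varphi) = D(\varphi)$, and in particular $aD(\varphi) \subseteq D(\varphi)$. For $(2) \Rightarrow (3)$: since $a \notin F^2$ but $a^2 \in F^2$, the subring $F^2(a) = F^2 + F^2 a$ of $F$ is a field, a degree-$2$ extension of $F^2$. The hypothesis $aD(\varphi) \subseteq D(\varphi)$ says exactly that the $F^2$-space $D(\varphi)$ is stable under multiplication by $a$, hence is an $F^2(a)$-vector space. Therefore $\mathrm{dim}(\varphi) = \dim_{F^2} D(\varphi)$ is even, and choosing an $F^2(a)$-basis $b_1,\dots,b_m$ of $D(\varphi)$, the list $b_1, ab_1, \dots, b_m, ab_m$ is an $F^2$-basis of $D(\varphi)$. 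By Proposition \ref{propsubformtheorem}, $\varphi \simeq \form{b_1, ab_1, \dots, b_m, ab_m} \simeq \pfister{a} \otimes \form{b_1, \dots, b_m}$, so $\varphi$ is divisible by $\pfister{a}$. For $(3) \Rightarrow (1)$: if $\varphi \simeq \pfister{a} \otimes \tau$ then $a\varphi \simeq (a\pfister{a}) \otimes \tau$, and $a\pfister{a} = \form{a, a^2} \simeq \form{1,a} = \pfister{a}$ because $a^2 \in F^2$; hence $a\varphi \simeq \varphi$.

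For $(2) \Leftrightarrow (4)$: set $L = F(\sqrt{a})$, a purely inseparable quadratic extension of $F$ with $L^2 = F^2(a)$. Computing value sets over $L$ gives $D(\varphi_L) = \sum_i L^2 a_i = F^2(a) \cdot D(\varphi) = D(\varphi) + aD(\varphi)$. Since the value set of a quasilinear form coincides with that of its anisotropic part, $\mathrm{dim}(\anispart{\varphi_L}) = \dim_{L^2} D(\varphi_L)$, and combining this with $\witti{0}{\varphi_L} = \mathrm{dim}(\varphi) - \mathrm{dim}(\anispart{\varphi_L})$ yields $\witti{0}{\varphi_L} = \mathrm{dim}(\varphi) - \dim_{L^2}\big(D(\varphi) + aD(\varphi)\big)$. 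Now $D(\varphi) + aD(\varphi)$ is an $F^2(a)$-space containing $D(\varphi)$ and contained in an $F^2$-space of dimension $2\,\mathrm{dim}(\varphi)$, so its $L^2$-dimension lies between $\tfrac{1}{2}\mathrm{dim}(\varphi)$ and $\mathrm{dim}(\varphi)$, attaining the value $\tfrac{1}{2}\mathrm{dim}(\varphi)$ precisely when $D(\varphi) + aD(\varphi) = D(\varphi)$, i.e. precisely when $aD(\varphi) \subseteq D(\varphi)$. Hence $\witti{0}{\varphi_L} = \tfrac{1}{2}\mathrm{dim}(\varphi)$ if and only if $(2)$ holds.

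The argument is largely a reorganization of facts about value sets; the one genuinely substantive point — used in $(2) \Rightarrow (3)$ and reused in the analysis of $(4)$ — is the recognition that the multiplicative closure condition $aD(\varphi) \subseteq D(\varphi)$ upgrades $D(\varphi)$ from an $F^2$-vector space to a vector space over the quadratic field $F^2(a)$. This is what simultaneously forces $\mathrm{dim}(\varphi)$ to be even and produces the explicit $\pfister{a}$-divisible diagonalization, so it is the step on which the whole proposition turns.
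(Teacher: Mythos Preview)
Your proof is correct. The paper does not supply its own proof of this proposition; it simply cites it from \cite[Lem.~6.3]{Hoffmann2}, so there is no in-paper argument to compare against. Your approach---recognizing $D(\varphi)$ as an $F^2$-vector space whose dimension equals $\mathrm{dim}(\varphi)$ by anisotropy, and then exploiting that the condition $aD(\varphi)\subseteq D(\varphi)$ upgrades it to an $F^2(a)$-space---is the natural one and is essentially how Hoffmann argues as well. The computation $L^2=F^2(a)$ and $D(\varphi_L)=D(\varphi)+aD(\varphi)$ for $L=F(\sqrt{a})$ is clean and gives the equivalence with~(4) directly.
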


\subsection{Function fields of quasilinear quadrics} \label{secff} We say that $\varphi$ is \emph{split} if $\mathrm{dim}(\anispart{\varphi})\leq 1$. If $\varphi$ is not split, then the projective $F$-quadric $\lbrace \varphi = 0 \rbrace$ is integral (see \cite[Lem. 7.1]{Hoffmann2}), and we write $F(\varphi)$ for its function field. In this case, we also write $F[\varphi]$ for the function field of the \emph{affine} $F$-quadric of the same equation. Clearly $F[\varphi]$ is $F$-isomorphic to a degree-$1$ purely transcendental extension of $F(\varphi)$. Explicitly, if $\varphi \simeq \form{a_0,a_1,\hdots,a_n}$ with $a_0 \neq 0$, then we have an $F$-isomorphism

$$ F[\varphi] \simeq F(X)\left(\sqrt{a_0^{-1}(a_1X_1^2 + \cdots + a_nX_n^2)}\right),$$
where $X = (X_1,\hdots,X_n)$ is an $n$-tuple of algebraically independent variables over $F$. The form $\varphi$ is evidently isotropic after scalar extension to $F(\varphi)$ (or $F[\varphi]$). 

\subsection{The Knebusch splitting tower} \label{secknebusch} We adapt an important construction of Knebusch (see \cite[\S 25]{EKM}) to the quasilinear setting: Let $F_0 = F$, $\varphi_0 = \anispart{\varphi}$, and inductively define
\begin{itemize} \item $F_r = F_{r-1}(\varphi_{r-1})$ (provided $\varphi_{r-1}$ is not split), and
\item $\varphi_r = \anispart{(\varphi_{F_r})}$ (provided $F_r$ is defined). \end{itemize}
Since the dimensions of the $\varphi_r$ become progressively smaller, this is a finite process, terminating at the first integer $h(\varphi)$ for which $\varphi_{h(\varphi)}$ is split; $h(\varphi)$ is called the \emph{height} of $\varphi$, and the tower of fields $F =F_0 \subset F_1 \subset \cdots \subset F_{h(\varphi)}$ is called the \emph{Knebusch splitting tower} of $\varphi$. This construction can be used to characterize quasi-Pfister neighbours as follows:

\begin{lemma}[{see \cite[\S 8]{HoffmannLaghribi1} or \cite[Cor. 3.11]{Scully1}}] \label{lemPNcriterion} Let $\varphi$ be an anisotropic quasilinear quadratic form of dimension $\geq 2$ over $F$. Then $\varphi$ is a quasi-Pfister neighbour if and only if $\varphi_1$ is similar to a quasi-Pfister form.
\end{lemma}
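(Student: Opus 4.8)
The plan is to prove the two implications separately; the ``only if'' direction is the substantive one. Throughout, set $d = \mathrm{dim}(\varphi)$ and let $s \geq 0$ be the integer with $2^s < d \leq 2^{s+1}$. I would work with the presentation $F[\varphi] \cong F(X)\left(\sqrt{a_0^{-1}(a_1 X_1^2 + \cdots + a_n X_n^2)}\right)$ of \S\ref{secff}, where $\varphi \simeq \form{a_0,\hdots,a_n}$ with $a_0 \neq 0$ and $X = (X_1,\hdots,X_n)$, $n = d-1$. Here $F[\varphi]$ is a quadratic extension of the rational function field $F(X)$, so $[F[\varphi]^2 : F(X)^2] = 2$, while $F[\varphi]/F(\varphi)$ is purely transcendental, so $\anispart{\varphi_{F[\varphi]}} = (\varphi_1)_{F[\varphi]}$; in particular $\mathrm{dim}(\varphi_1) = [D(\varphi_{F[\varphi]}) : F[\varphi]^2]$ and $\mathrm{ndeg}(\varphi_1) = [F[\varphi]^2 N(\varphi) : F[\varphi]^2]$. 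Since $N(\varphi) \subseteq F \subseteq F(X)$ and $F(X)/F$ is purely transcendental, $[F(X)^2 N(\varphi):F(X)^2] = \mathrm{ndeg}(\varphi)$, and feeding this and $[F[\varphi]^2:F(X)^2]=2$ into the multiplicativity of field degrees gives the key preliminary fact $\mathrm{ndeg}(\varphi_1) \in \{\mathrm{ndeg}(\varphi), \tfrac12\mathrm{ndeg}(\varphi)\}$. Recall finally that $\varphi_{F(\varphi)}$ is isotropic, so $\mathrm{dim}(\varphi_1) < d$.

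For the ``if'' direction, suppose $\varphi_1$ is similar to a quasi-Pfister form of dimension $2^m$. Since norm degree is a similarity invariant and a quasi-Pfister form of dimension $2^m$ has norm degree $2^m$, we get $\mathrm{ndeg}(\varphi_1) = 2^m$, hence $\mathrm{ndeg}(\varphi) \leq 2^{m+1}$ by the preceding. But $2^m = \mathrm{dim}(\varphi_1) < d \leq 2^{s+1}$ forces $m \leq s$, so $\mathrm{ndeg}(\varphi) \leq 2^{s+1}$; combined with the lower bound $\mathrm{ndeg}(\varphi) \geq 2^{s+1}$ of Lemma \ref{lemnormdegree}, this gives $\mathrm{ndeg}(\varphi) = 2^{s+1}$, and Lemma \ref{lemnormdegree} then shows $\varphi$ is a quasi-Pfister neighbour.

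For the ``only if'' direction, assume $\varphi$ is a quasi-Pfister neighbour. By Lemma \ref{lemnormdegree}, $\pi := \normform{\varphi}$ is an anisotropic $(s+1)$-fold quasi-Pfister form of dimension $2^{s+1}$, and after scaling I may assume $\varphi \subset \pi$ and $\normform{\varphi} = \pi$. Then $\pi_{F(\varphi)}$ is isotropic (it contains the isotropic $\varphi_{F(\varphi)}$), and since $D(\pi_{F(\varphi)}) = F(\varphi)^2 N(\pi)$ is a field, $\tau := \anispart{\pi_{F(\varphi)}}$ is again a quasi-Pfister form, of dimension $\mathrm{ndeg}(\pi_{F(\varphi)}) = \mathrm{ndeg}(\varphi_{F(\varphi)}) = \mathrm{ndeg}(\varphi_1)$ (using $N(\pi) = D(\normform\varphi) = N(\varphi)$); by the preliminary fact this is $2^s$ or $2^{s+1}$, and it is $< 2^{s+1}$ because $\pi_{F(\varphi)}$ is isotropic, so $\tau$ is an $s$-fold quasi-Pfister form of dimension $2^s$. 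By Proposition \ref{propsubformtheorem}, $\varphi_1$ is a subform of $\tau$ (as $D(\varphi_{F(\varphi)}) \subseteq D(\pi_{F(\varphi)}) = D(\tau)$), so it suffices to show $\mathrm{dim}(\varphi_1) = 2^s$, whereupon $\varphi_1 \simeq \tau$ is a quasi-Pfister form.

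Proving $\mathrm{dim}(\varphi_1) = 2^s$ — equivalently $\witti{0}{\varphi_{F(\varphi)}} = d - 2^s$, i.e. that a quasi-Pfister neighbour splits in step with its norm form — is the point where I expect the genuine work to lie. One inequality, $\mathrm{dim}(\varphi_1) \geq d - 2^s$, is easy: the orthogonal complement $\rho$ of $\varphi$ in $\pi$ has dimension $2^{s+1}-d < 2^s$, hence remains anisotropic over $F(\varphi)$ by the quasilinear Separation Theorem, so $D(\pi_{F(\varphi)}) = D(\varphi_{F(\varphi)}) + D(\rho_{F(\varphi)})$ yields $2^s \leq \mathrm{dim}(\varphi_1) + (2^{s+1}-d)$. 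This suffices only when $d = 2^{s+1}$, so for proper neighbours I would argue over $F[\varphi]$ instead: there $D(\varphi_{F[\varphi]}) = F[\varphi]^2 D(\varphi)$ contains $1$ (take $1 \in D(\varphi)$), so it coincides with the field $D(\pi_{F[\varphi]}) = F[\varphi]^2 N(\varphi)$ — and hence $\mathrm{dim}(\varphi_1) = 2^s$ — as soon as it is closed under multiplication, i.e. $a_ia_j \in F[\varphi]^2 D(\varphi)$ for all $i,j$; the relation $a_0^{-1}(a_1X_1^2 + \cdots + a_nX_n^2) \in F[\varphi]^2$ built into the presentation settles the low-dimensional cases and drives the general argument, the remaining input being the roundness of $\pi$ (each $a_i \in D(\varphi) \subseteq N(\pi)$ is a similarity factor of $\pi$, so $a_i^{-1}\varphi$ is again a subform of $\pi$). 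Alternatively, this equality is exactly the type of statement governed by \cite[Thm. 6.6]{Scully2}, and is contained in \cite[\S 8]{HoffmannLaghribi1} and \cite{Scully1}; I would be content to cite it in that form. It is the only ingredient here not already reduced to bookkeeping with norm degrees and the roundness of quasi-Pfister forms.
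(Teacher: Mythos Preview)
The paper does not give its own proof of this lemma --- it is simply quoted from \cite[\S 8]{HoffmannLaghribi1} and \cite[Cor.~3.11]{Scully1} --- so there is no in-paper argument to compare against. Your proposal is therefore more than the paper itself offers, and both directions of your argument are essentially correct. The ``if'' direction is clean: combining your preliminary norm-degree bound with Lemma~\ref{lemnormdegree} works exactly as you describe (and in fact the paper already records the sharper statement $\mathrm{lndeg}(\varphi_1) = \mathrm{lndeg}(\varphi) - 1$ as Lemma~\ref{lemndegtower}, which would shorten this step).

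For the ``only if'' direction, your setup is right --- you correctly embed $\varphi_1$ into the $s$-fold quasi-Pfister form $\tau = \anispart{(\pi_{F(\varphi)})}$ of dimension $2^s$ --- but you then take an unnecessarily long route to the equality $\dim(\varphi_1) = 2^s$. Your complement argument yields only $\dim(\varphi_1) \geq d - 2^s$, which (as you note) is insufficient for proper neighbours, and you then fall back on citations. But there is a one-line finish: take any subform $\sigma \subset \varphi$ with $\dim(\sigma) = 2^s$; since $2^s < \dim(\varphi)$, the quasilinear Separation Theorem gives $\sigma_{F(\varphi)}$ anisotropic, hence (via Proposition~\ref{propsubformtheorem}) $\sigma_{F(\varphi)} \subset \varphi_1$, so $\dim(\varphi_1) \geq 2^s$. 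Combined with $\varphi_1 \subset \tau$, this forces $\dim(\varphi_1) = 2^s$ and $\varphi_1 \simeq \tau$. This is nothing more than the general bound $\witti{1}{\varphi} \leq \dim(\varphi) - 2^s$ that the paper records immediately after the lemma; once you have it, no further citation or work over $F[\varphi]$ is needed.
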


If $\varphi$ is \emph{not} split, then for each $1 \leq r \leq h(\varphi)$, we set $\mathfrak{i}_r(\varphi) = \witti{0}{\varphi_{F_r}} - \witti{0}{\varphi_{F_{r-1}}}$; $\witti{r}{\varphi}$ is called the $r$-th \emph{higher isotropy index} of $\varphi$. If $\varphi$ is anisotropic and $s$ is the unique non-negative integer such that $2^s < \mathrm{dim}(\varphi) \leq 2^{s+1}$, then it is known that $\witti{1}{\varphi} \leq \mathrm{dim}(\varphi) - 2^s$ (see \cite[Lem. 4.1]{HoffmannLaghribi2} or Theorem \ref{thmdivisibilityindices} below). If equality holds here, then we say that $\varphi$ has \emph{maximal splitting}. By Lemma \ref{lemPNcriterion}, anisotropic quasi-Pfister neighbours have maximal splitting. The converse is not true, but we have the following assertion, the analogue of which is still open for non-degenerate quadratic forms (even in characteristic $\neq 2$):

\begin{theorem}[{see \cite[Thm. 9.6]{Scully0}}] \label{thmmaxsplitting} Let $\varphi$ be an anisotropic quasilinear quadratic form of dimension $\geq 2$ over $F$ and let $s$ be the unique non-negative integer such that $2^s < \mathrm{dim}(\varphi) \leq 2^{s+1}$. If $\varphi$ has maximal splitting and $\mathrm{dim}(\varphi) > 2^s + 2^{s-2}$, then $\varphi$ is a quasi-Pfister neighbour. 
\end{theorem}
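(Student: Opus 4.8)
The plan is to reduce, via the Knebusch-tower characterisation of quasi-Pfister neighbours (Lemma~\ref{lemPNcriterion}), to a statement about how the norm form of $\varphi$ degenerates over $F(\varphi)$, and then to argue by induction on $s$. By Lemma~\ref{lemPNcriterion} it suffices to show that $\varphi_1$ (the first form in the Knebusch splitting tower of $\varphi$; see \S~\ref{secknebusch}) is similar to a quasi-Pfister form. The maximal splitting hypothesis says exactly that $\witti{1}{\varphi} = \mathrm{dim}(\varphi) - 2^s$, so $\mathrm{dim}(\varphi_1) = 2^s$. Since this is a power of $2$, Lemma~\ref{lemnormdegree} (applied to $\varphi_1$, for which the relevant integer is $s-1$) shows that $\varphi_1$ is similar to a quasi-Pfister form if and only if $\varphi_1$ is a quasi-Pfister neighbour, if and only if $\mathrm{ndeg}(\varphi_1) = 2^s$; invoking Lemma~\ref{lemPNcriterion} and Lemma~\ref{lemnormdegree} once more, all of these are equivalent to $\varphi$ itself being a quasi-Pfister neighbour. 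So the theorem reduces to the implication: \emph{maximal splitting together with $\mathrm{dim}(\varphi) > 2^s + 2^{s-2}$ forces $\mathrm{ndeg}(\varphi) = 2^{s+1}$.}

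Next I would record a monotonicity statement: $\mathrm{ndeg}\bigl(\anispart{(\psi_{F(\chi)})}\bigr) \leq \mathrm{ndeg}(\psi)$ for anisotropic $\psi, \chi$ with $\chi$ not split. Indeed, $\psi$ is similar to a subform of $\normform{\psi}$, so by Proposition~\ref{propsubformtheorem} the anisotropic part of $\psi_{F(\chi)}$ is similar to a subform of the anisotropic part of $(\normform{\psi})_{F(\chi)}$; the latter is again a quasi-Pfister form (the anisotropic part of a quasi-Pfister form is quasi-Pfister, its value set being the associated norm field), so by the embedding property recalled in \S~\ref{secnormform} the norm form of $\anispart{(\psi_{F(\chi)})}$ embeds into it, and it has dimension at most $\mathrm{ndeg}(\psi)$. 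Taking $\psi = \chi = \varphi$ exhibits $\varphi_1$ as (similar to) a subform of the quasi-Pfister form $\pi'$, the anisotropic part of $(\normform{\varphi})_{F(\varphi)}$, with $\mathrm{ndeg}(\varphi_1) \leq \mathrm{dim}(\pi') \leq \mathrm{ndeg}(\varphi)$. If $\mathrm{dim}(\pi') = 2^s$ we are done, since then $\varphi_1$ is similar to all of $\pi'$; so it remains to rule out $\mathrm{dim}(\pi') \geq 2^{s+1}$, i.e.\ that $\varphi$ fails to be a quasi-Pfister neighbour.

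I would then argue by strong induction on $s$, the cases $s \leq 1$ being immediate ($2$-dimensional forms are similar to $1$-fold quasi-Pfister forms, and for $\mathrm{dim}(\varphi) \in \{3,4\}$ one has $\mathrm{dim}(\varphi_1) \leq 2$). For the inductive step, suppose $\varphi$ is a counterexample; then $\varphi$, and hence $\varphi_1$, is not a quasi-Pfister neighbour. Since $\mathrm{dim}(\varphi_1) = 2^s > 2^{s-1} + 2^{s-3}$, the inductive hypothesis applied to $\varphi_1$ over $F(\varphi)$ forces $\varphi_1$ not to have maximal splitting:
$$ \witti{2}{\varphi} = \witti{1}{\varphi_1} < \mathrm{dim}(\varphi_1) - 2^{s-1} = 2^{s-1}, \qquad \text{equivalently} \qquad \mathrm{dim}(\varphi_2) > 2^{s-1}. $$
The contradiction must now come from confronting this ``too slow'' splitting at the second step of the Knebusch tower with the hypothesis that splitting is \emph{maximal} at the first step --- that is, with $\mathrm{dim}(\varphi_1) = 2^s$ while $\mathrm{dim}(\varphi) > 2^s + 2^{s-2}$. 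Concretely, one analyses the norm forms of the $\varphi_r$, which by the previous step form a chain of quasi-Pfister forms of non-increasing dimension along the tower, together with the divisibility constraints on the higher isotropy indices (Theorem~\ref{thmdivisibilityindices}), and carries out a dimension count using the $F^2$-linear structure of the value sets $D(\varphi_r)$ and the explicit presentation of $F[\varphi]$ from \S~\ref{secff} --- the latter pinning down precisely which elements of $N(\varphi)$ become squares over $F(\varphi)$, and hence how far $\normform{\varphi}$ collapses. Making this count precise, and in particular locating exactly where the slack ``$2^{s-2}$'' in the dimension hypothesis is spent, is the main obstacle; the remainder is bookkeeping.
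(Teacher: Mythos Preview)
This theorem is not proved in the present paper --- it is quoted from \cite[Thm.~9.6]{Scully0} --- so there is no proof here to compare your proposal against. That said, your proposal is not a complete argument, and you acknowledge as much. The reduction (via Lemmas~\ref{lemPNcriterion} and~\ref{lemnormdegree}) to the claim $\mathrm{lndeg}(\varphi) = s+1$ is correct, and the induction on $s$ is a reasonable scaffold; the observation that in a counterexample $\varphi_1$ must fail maximal splitting, i.e.\ $\mathrm{dim}(\varphi_2) > 2^{s-1}$, is also fine. But the contradiction you then need is exactly what you label ``the main obstacle'', and the tools you gesture at (Theorem~\ref{thmdivisibilityindices}, monotonicity of the norm degree, the explicit presentation of $F[\varphi]$) do not visibly supply one.

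More tellingly, nothing in your outline actually consumes the hypothesis $\mathrm{dim}(\varphi) > 2^s + 2^{s-2}$. Your inductive step only uses $\mathrm{dim}(\varphi_1) = 2^s > 2^{s-1} + 2^{s-3}$, which holds automatically once $\varphi$ has maximal splitting, irrespective of the original dimension bound on $\varphi$. Since the threshold $2^s + 2^{s-2}$ is sharp --- there exist anisotropic forms of dimension exactly $2^s + 2^{s-2}$ with maximal splitting that are not quasi-Pfister neighbours (see \cite[Ex.~7.31]{Hoffmann2}) --- any correct argument must use this hypothesis at some point. Yours does not, so the missing step is not bookkeeping but the substantive content of the theorem.
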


Finally, we will need to recall the evolution of the norm degree of a given form as one runs over its Knebusch splitting tower:

\begin{lemma}[{see \cite[Lem. 7.12]{Hoffmann2}}] \label{lemndegtower} Let $\varphi$ be a non-split quasilinear quadratic form over $F$. Then $\mathrm{lndeg}(\varphi_1) = \mathrm{lndeg}(\varphi) - 1$. \end{lemma}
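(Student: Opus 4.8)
The plan is to reduce the statement to a short field-theoretic computation carried out inside the radical presentation of $F[\varphi]$ recorded in \S\ref{secff}.

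\emph{Reductions.} Since the norm field of a non-zero form depends only on the set of values it represents, hence only on its anisotropic part --- and since the same is true of the field $F_1$ and the form $\varphi_1$ appearing in the statement --- I may assume that $\varphi$ is anisotropic, in which case $\mathrm{dim}(\varphi) \geq 2$ because $\varphi$ is non-split. Replacing $\varphi$ by a scalar multiple changes $\varphi_1$ only up to similarity and alters neither $\mathrm{ndeg}(\varphi)$ nor $\mathrm{ndeg}(\varphi_1)$, so I may also assume $1 \in D(\varphi)$ and write $\varphi \simeq \form{1, b_1, \hdots, b_n}$ with $n \geq 1$. Anisotropy forces each $b_i$ to lie in $F^* \setminus F^2$: if $b_i = 0$ then the $i$-th basis vector is isotropic, and if $b_i = c^2$ then the subform $\form{1, c^2}$, and hence $\varphi$, is isotropic.

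\emph{Computation.} I first record that the norm degree is invariant under purely transcendental field extensions: for a non-zero form $\tau$ over a field $L$ and a simple transcendental extension $L(t)/L$ one has $N(\tau_{L(t)}) = N(\tau)(t^2) \subseteq L(t)$, which has degree $[N(\tau):L^2]$ over $L(t)^2 = L^2(t^2)$ since $t^2$ is transcendental over $N(\tau)$; the general case follows by iteration. Applying this to the extensions $F[\varphi]/F_1$ and $K \coloneqq F(X_1,\hdots,X_n)$ over $F$ yields $\mathrm{ndeg}(\varphi_1) = \mathrm{ndeg}(\varphi_{F_1}) = \mathrm{ndeg}(\varphi_{F[\varphi]})$ and $\mathrm{ndeg}(\varphi_K) = \mathrm{ndeg}(\varphi)$. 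By \S\ref{secff} there is an $F$-isomorphism $F[\varphi] \simeq K(\sqrt{d})$, where $d = b_1X_1^2 + \cdots + b_nX_n^2 \in K$; since the $b_i$ lie outside $F^2$, the element $d$ is not a square in $K$, so $[K(\sqrt d):K] = 2$. Using $(u + v\sqrt{d})^2 = u^2 + v^2d$ in characteristic $2$, one finds $F[\varphi]^2 = K^2(d)$, a quadratic extension of $K^2$; and since the norm field of $\form{1,b_1,\hdots,b_n}$ over any field is generated over that field's squares by $b_1,\hdots,b_n$, we obtain
$$ N(\varphi_{F[\varphi]}) = K^2(d)(b_1,\hdots,b_n) = K^2(b_1,\hdots,b_n)(d) = N(\varphi_K)(d). $$
The decisive point is that $d = \sum_i b_iX_i^2$ already lies in $N(\varphi_K) = K^2(b_1,\hdots,b_n)$ --- each $b_i$ lies there and each $X_i^2$ lies in $K^2$ --- so in fact $N(\varphi_{F[\varphi]}) = N(\varphi_K)$. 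Reading off the tower $K^2 \subseteq K^2(d) \subseteq N(\varphi_K)$ then gives
$$ \mathrm{ndeg}(\varphi_{F[\varphi]}) \;=\; [N(\varphi_K):K^2(d)] \;=\; \frac{[N(\varphi_K):K^2]}{[K^2(d):K^2]} \;=\; \frac{\mathrm{ndeg}(\varphi)}{2}, $$
hence $\mathrm{ndeg}(\varphi_1) = \mathrm{ndeg}(\varphi)/2$, i.e.\ $\mathrm{lndeg}(\varphi_1) = \mathrm{lndeg}(\varphi) - 1$.

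\emph{Main obstacle.} In truth there is no serious obstacle once the radical presentation of $F[\varphi]$ is available: the proof is a short computation. The points requiring (routine) care are the invariance of the norm degree under purely transcendental extensions --- which rests on the algebraic independence of $t^2$ from the finite purely inseparable extension $N(\tau)/L^2$ --- and the verification that the two preliminary reductions preserve every quantity in the statement. The single load-bearing idea is the observation that the element $d$ cutting out the quadratic extension $F[\varphi]/K$ visibly lies in the norm field $N(\varphi_K)$; everything else is bookkeeping.
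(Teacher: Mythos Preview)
Your argument is correct. The paper does not supply its own proof of this lemma but simply cites \cite[Lem.~7.12]{Hoffmann2}, so there is nothing to compare against directly; that said, your computation via the radical presentation of $F[\varphi]$ --- identifying $F[\varphi]^2 = K^2(d)$ and observing that $d$ already lies in $N(\varphi_K)$ --- is essentially the natural (and standard) route to this fact. One small remark: your justification that $d \notin K^2$ (``since the $b_i$ lie outside $F^2$'') is a little compressed; the implication does hold, since if $d$ were a square in $K$ it would be the square of a linear polynomial, forcing each $b_i \in F^2$, but you could also simply appeal to the integrality of the quadric (\S\ref{secff}).
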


\subsection{Divisibility indices} \label{secdivindices} If $\varphi$ is non-zero, then the \emph{divisibility index} of $\varphi$ is defined as the largest integer $\mathfrak{d}_0(\varphi)$ such that $\varphi$ is divisible by an anisotropic quasi-Pfister form of dimension $2^{\mathfrak{d}_0(\varphi)}$. We will need the following lemma, which is analogous to a well-known fact from the non-degenerate theory (cf. \cite[Cor. 2.1.11]{Kahn}):

\begin{lemma} \label{lemdivindexineq} Let $\varphi$ and $\sigma$ be quasilinear quadratic forms over $F$. If $\varphi$ is divisible by $\sigma$, then $\mathfrak{d}_0(\anispart{(\varphi)}) \geq \mathfrak{d}_0(\sigma)$. 
\begin{proof} Let $\pi$ be an anisotropic quasi-Pfister form which divides $\sigma$. Then $\pi$ also divides $\varphi$, and hence $\anispart{\varphi}$ by \cite[Prop. 4.19]{Hoffmann2}. \end{proof} \end{lemma}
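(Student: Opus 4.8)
\textbf{Proof proposal for Lemma~\ref{lemdivindexineq}.}

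The plan is to work directly from the definitions, reducing everything to a statement about a single anisotropic quasi-Pfister form dividing $\varphi$. Let $d = \mathfrak{d}_0(\sigma)$, and let $\pi$ be an anisotropic quasi-Pfister form of dimension $2^d$ which divides $\sigma$; such a $\pi$ exists by the definition of the divisibility index (and is nontrivial to invoke only when $\sigma \neq 0$, but if $\sigma = 0$ then $\varphi \simeq \sigma \otimes \tau = 0$ and there is nothing to prove, so we may assume $\sigma$, hence $\varphi$, is non-zero). Since $\varphi$ is divisible by $\sigma$, say $\varphi \simeq \sigma \otimes \tau$ for some quasilinear form $\tau$ over $F$, and $\sigma \simeq \pi \otimes \rho$ for some $\rho$, associativity of the tensor product gives $\varphi \simeq \pi \otimes (\rho \otimes \tau)$, so $\pi$ divides $\varphi$ as well.

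The key step is then to pass from $\varphi$ to its anisotropic part. Here I would invoke \cite[Prop. 4.19]{Hoffmann2}, which (as already used in the proof above) states that if an anisotropic quasi-Pfister form $\pi$ divides a quasilinear form $\varphi$, then $\pi$ divides $\anispart{\varphi}$ as well. Applying this, $\anispart{\varphi}$ is divisible by the anisotropic quasi-Pfister form $\pi$ of dimension $2^d$. By the definition of $\mathfrak{d}_0$ as the \emph{largest} integer $e$ for which $\anispart{\varphi}$ is divisible by an anisotropic quasi-Pfister form of dimension $2^e$, we conclude $\mathfrak{d}_0(\anispart{\varphi}) \geq d = \mathfrak{d}_0(\sigma)$, as desired.

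There is no real obstacle here: the statement is essentially a bookkeeping consequence of the definition of divisibility index together with the single substantive input \cite[Prop. 4.19]{Hoffmann2} (the fact that divisibility by an anisotropic quasi-Pfister form descends to the anisotropic part). The only point requiring a small amount of care is the trivial case $\sigma = 0$ (or, more generally, ensuring that the quasi-Pfister form $\pi$ realizing $\mathfrak{d}_0(\sigma)$ is genuinely anisotropic, which is built into the definition of $\mathfrak{d}_0$), and the observation that $\mathfrak{d}_0(\anispart{\varphi})$ is well-defined, i.e.\ that $\anispart{\varphi}$ is non-zero — which holds because $\varphi \simeq \sigma \otimes \tau$ with $\sigma \neq 0$ forces $D(\varphi) \neq \{0\}$ unless $\tau = 0$, in which case again $\varphi = 0$ and the claim is vacuous. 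This is exactly the brief argument the paper gives.
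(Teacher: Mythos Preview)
Your argument is correct and follows exactly the same route as the paper: pick an anisotropic quasi-Pfister form $\pi$ of dimension $2^{\mathfrak{d}_0(\sigma)}$ dividing $\sigma$, observe it divides $\varphi$ by transitivity, and invoke \cite[Prop.~4.19]{Hoffmann2} to pass to $\anispart{\varphi}$. Your extra remarks on the degenerate cases $\sigma = 0$ or $\tau = 0$ are harmless but unnecessary, since $\mathfrak{d}_0$ is only defined for non-zero forms and the lemma is implicitly stated under that standing hypothesis.
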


 If $\varphi$ is \emph{not} split, then for each $1 \leq r \leq h(\varphi)$, we set $\mathfrak{d}_r(\varphi) = \mathfrak{d}_0(\varphi_r)$; $\mathfrak{d}_r(\varphi)$ is called the $r$-th \emph{higher divisibility index} of $\varphi$. The fundamental result concerning the higher divisibility indices of quasilinear quadratic forms is the first part of the following theorem:

\begin{theorem}[{see \cite[Thm. 6.1]{Scully1}}] \label{thmdivisibilityindices} Let $\varphi$ be an anisotropic quasilinear quadratic form of dimension $\geq 2$ over $F$ and let $1 \leq r \leq h(\varphi)$. Then
\begin{enumerate} \item $\mathfrak{i}_r(\varphi) \leq 2^{\mathfrak{d}_r(\varphi)}$.
\item If $\varphi_{r-1}$ is not similar to a quasi-Pfister form, then $\mathfrak{i}_r(\varphi)$ is divisible by $2^{\mathfrak{d}_{r-1}(\varphi)}$. \end{enumerate}
\end{theorem}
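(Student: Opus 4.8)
\emph{Strategy.} The plan is to reduce both assertions to $r=1$ and then analyse the first step of the Knebusch splitting tower through the explicit model $F[\varphi]\simeq F(X)(\sqrt{d})$, combining the resulting value-set computation with Theorem \ref{thmmaxsplitting}. For the reduction, replacing $F$ by $F_{r-1}$ and $\varphi$ by $\varphi_{r-1}$ turns $\witti{r}{\varphi},\mathfrak{d}_{r-1}(\varphi),\mathfrak{d}_r(\varphi)$ into $\witti{1}{\varphi_{r-1}},\mathfrak{d}_0(\varphi_{r-1}),\mathfrak{d}_1(\varphi_{r-1})$, so it suffices, for an anisotropic quasilinear $\varphi$ of dimension $\geq2$ with $\varphi_1:=\anispart{(\varphi_{F(\varphi)})}$, to prove: (a) $\witti{1}{\varphi}\leq2^{\mathfrak{d}_0(\varphi_1)}$; (b) if $\varphi$ is not similar to a quasi-Pfister form, then $2^{\mathfrak{d}_0(\varphi)}\mid\witti{1}{\varphi}$. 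After scaling I may assume $\varphi\simeq\form{1,a_1,\hdots,a_n}$, so $F[\varphi]\simeq E(\sqrt{d})$ with $E=F(X_1,\hdots,X_n)$ and $d=a_1X_1^2+\cdots+a_nX_n^2\in E^*\setminus E^{*2}$; as $E/F$ and $F[\varphi]/F(\varphi)$ are purely transcendental, $\witti{1}{\varphi}=\witti{0}{\varphi_{E(\sqrt{d})}}$ and $\varphi_E$ is anisotropic. The key computation: for quasilinear $\psi$ over a field $L$ of characteristic $2$ and $e\in L^*\setminus L^{*2}$, writing vectors of $V_\psi\otimes L(\sqrt{e})$ as $v+\sqrt{e}\,w$ and using additivity of $\psi$ gives $D(\psi_{L(\sqrt{e})})=D(\psi_L)+e\,D(\psi_L)\subseteq L$, whence, regarding value sets as $L^2$-subspaces of $L$ and using $\mathrm{dim}(\anispart{\tau})=\mathrm{dim}(\tau)-\witti{0}{\tau}$,
$$\witti{0}{\psi_{L(\sqrt{e})}}=\frac{1}{2}\dim_{L^2}\!\big(D(\psi_L)\cap e\,D(\psi_L)\big).$$
With $L=E$, $e=d$, $\psi=\varphi_E$ and $U:=D(\varphi_E)$ this reads $\witti{1}{\varphi}=\frac{1}{2}\dim_{E^2}(U\cap dU)$, while the value set of $\anispart{(\varphi_{F[\varphi]})}$ is $U+dU$. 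I also use that for anisotropic quasilinear $\sigma$ over a field $M$ the set $S(\sigma):=\lbrace x\in M : xD(\sigma)\subseteq D(\sigma)\rbrace$ is a finite field extension of $M^2$ inside $M$, with $\mathfrak{d}_0(\sigma)=\log_2[S(\sigma):M^2]$ by iterating (1)$\Leftrightarrow$(2)$\Leftrightarrow$(3) of Proposition \ref{propsimilarity}.

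For (b): both $U$ and $dU$ are $S(\varphi_E)$-modules, so $I:=U\cap dU$ is one; and since $d^2\in E^2\subseteq S(\varphi_E)$ we get $dI=dU\cap d^2U\subseteq U\cap dU=I$, so $I$ is a module over the field $S(\varphi_E)[d]$. If $\witti{1}{\varphi}=\mathrm{dim}(\varphi)/2$ then $d\in G(\varphi_E)^*$ by Proposition \ref{propsimilarity}(4); but combined with $\witti{1}{\varphi}\leq\mathrm{dim}(\varphi)-2^s$ (\cite[Lem.~4.1]{HoffmannLaghribi2}) this forces $\mathrm{dim}(\varphi)=2^{s+1}$ with maximal splitting, so $\varphi$ is similar to a quasi-Pfister form by Theorem \ref{thmmaxsplitting}. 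Hence under the hypothesis of (b) we have $d\notin S(\varphi_E)$, so $[S(\varphi_E)[d]:E^2]=2^{\mathfrak{d}_0(\varphi_E)+1}\geq2^{\mathfrak{d}_0(\varphi)+1}$; since $I$ is an $S(\varphi_E)[d]$-module this divides $\dim_{E^2}I=2\witti{1}{\varphi}$, giving $2^{\mathfrak{d}_0(\varphi)}\mid\witti{1}{\varphi}$.

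For (a): if $\varphi$ is a quasi-Pfister neighbour then $\varphi_1$ is similar to a quasi-Pfister form (Lemma \ref{lemPNcriterion}), $\varphi$ has maximal splitting, $\mathrm{dim}(\varphi_1)=2^s$, and $\witti{1}{\varphi}=\mathrm{dim}(\varphi)-2^s\leq2^s=2^{\mathfrak{d}_0(\varphi_1)}$. In general I would restate (a), via the computation above and $\mathrm{Stab}_E(U+dU)=S\big(\anispart{(\varphi_{F[\varphi]})}\big)$, as the inequality $\dim_{E^2}(U\cap dU)\leq\dim_{E^2}\mathrm{Stab}_E(U+dU)$; the inclusions $d\in\mathrm{Stab}_E(U+dU)$ and $S(\varphi_E)\subseteq\mathrm{Stab}_E(U+dU)$ are immediate and, with (b), already settle the case $\witti{1}{\varphi}=2^{\mathfrak{d}_0(\varphi)}$. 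For the remaining range I would combine the divisibility from (b), the bound $\witti{1}{\varphi}\leq\mathrm{dim}(\varphi)-2^s$, the relation $\mathrm{lndeg}(\varphi_1)=\mathrm{lndeg}(\varphi)-1$ (Lemma \ref{lemndegtower}) together with $\mathfrak{d}_0(\varphi_1)\geq\mathfrak{d}_0(\varphi)$, Theorem \ref{thmmaxsplitting} applied to $\varphi_1$ (which forces $\varphi$ to be a quasi-Pfister neighbour whenever $\varphi_1$ is too divisible relative to its dimension), and an induction on $\mathrm{dim}(\varphi)$, to trap $\witti{1}{\varphi}$ in the interval $[2^{\mathfrak{d}_0(\varphi)},\,2^{\mathfrak{d}_0(\varphi_1)}]$.

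The main obstacle will be part (a): establishing $\dim_{E^2}(U\cap dU)\leq\dim_{E^2}\mathrm{Stab}_E(U+dU)$ in full generality --- equivalently, that undoing one step of the Knebusch tower raises the dimension by at most $2^{\mathfrak{d}_0(\varphi_1)}$ --- seems to require genuinely exploiting the field-theoretic structure of $F(X_1,\hdots,X_n)$ in tandem with Theorem \ref{thmmaxsplitting}, and this is where essentially all of the case analysis lies. Part (b), by contrast, reduces to the short module-theoretic argument above.
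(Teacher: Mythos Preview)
First, note that the present paper does not itself prove Theorem~\ref{thmdivisibilityindices}; the result is quoted from \cite[Thm.~6.1]{Scully1}, so there is no in-paper argument to compare against directly.

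Your treatment of part (2) is essentially complete and correct: the value-set identity $D(\psi_{L(\sqrt{e})})=D(\psi_L)+eD(\psi_L)$, the recognition of $I=U\cap dU$ as a module over the field $S(\varphi_E)[d]$, and the resulting degree count give the divisibility cleanly. One simplification is available: to rule out $d\in S(\varphi_E)$ you pass through Theorem~\ref{thmmaxsplitting}, but this detour is unnecessary. If $\witti{1}{\varphi}=\mathrm{dim}(\varphi)/2$, then Proposition~\ref{propquasihyp} applied with $p=q=\varphi$ forces $\varphi$ to be divisible by $\normform{\varphi}$, and since $\mathrm{dim}(\normform{\varphi})\geq\mathrm{dim}(\varphi)$ by Lemma~\ref{lemnormdegree}, $\varphi$ is similar to $\normform{\varphi}$ outright.

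Part (1), however, is left genuinely incomplete --- and you acknowledge this. The reformulation as $\dim_{E^2}(U\cap dU)\leq\dim_{E^2}\mathrm{Stab}_E(U+dU)$ is correct, and the inclusion $S(\varphi_E)[d]\subseteq\mathrm{Stab}_E(U+dU)$ does dispose of the boundary case $\witti{1}{\varphi}=2^{\mathfrak{d}_0(\varphi)}$ (indeed, it also yields the inequality $\mathfrak{d}_0(\varphi_1)\geq\mathfrak{d}_0(\varphi)$ you later invoke). But for $\witti{1}{\varphi}>2^{\mathfrak{d}_0(\varphi)}$ your plan is only a list of ingredients --- part (2), the bound $\witti{1}{\varphi}\leq\mathrm{dim}(\varphi)-2^s$, Lemma~\ref{lemndegtower}, Theorem~\ref{thmmaxsplitting} for $\varphi_1$, and induction on dimension --- with no indication of how they assemble into an argument. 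In particular, it is not clear what the induction hypothesis would even say (passing to $\varphi_1$ changes the base field, and the target quantity $\mathfrak{d}_0(\varphi_1)$ is already the object under study), and your parenthetical claim that Theorem~\ref{thmmaxsplitting} applied to $\varphi_1$ ``forces $\varphi$ to be a quasi-Pfister neighbour whenever $\varphi_1$ is too divisible'' conflates two different conditions: by Lemma~\ref{lemPNcriterion}, $\varphi$ is a quasi-Pfister neighbour precisely when $\varphi_1$ is \emph{similar to a quasi-Pfister form}, not merely when $\varphi_1$ is itself a quasi-Pfister neighbour. This is a real gap, not a routine omission; the stabilizer inequality you isolate is exactly the substantive content of part (1), and the proposal does not establish it.
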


\begin{remark} Theorem \ref{thmdivisibilityindices} effectively determines all possible values of the Knebusch splitting pattern for quasilinear quadratic forms -- see \cite[\S 6]{Scully1}. In particular, it includes the quasilinear analogue of Karpenko's theorem on the possible values of $\mathfrak{i}_1(-)$ (\cite{Karpenko1}). Note that the problem of determining the possible values of the Knebusch splitting pattern for non-degenerate quadratic forms is wide open (even in characteristic $\neq 2$). 

\end{remark}

\section{The invariant $d(-)$}

Our approach to the quasilinear case of Conjecture \ref{mainconj} will be based on a certain procedure which permits to lower not only the value of $\mathrm{dim}(p)$, but also the value of $k$. In this section, we state and prove a technical lemma which will be needed to meet the second purpose. In order to improve the readability of what follows, it will be convenient to introduce notation which allows us to explicitly express the integer $k$ as a function of the pair $(p,q)$. With this in mind, we make the following definition:

\begin{definition} Given a quadratic form $\varphi$ over a field (of any characteristic), we set
$$ d(\varphi) = \mathrm{dim}(\varphi) - 2\witti{0}{\varphi}. $$ \end{definition}

Observe that if $\varphi$ is non-degenerate (or even regular), then $d(\varphi)$ is nothing else but the dimension of the anisotropic part of $\varphi$. In general, however, this need not be the case. For example, if $\varphi$ is quasilinear, then the statement only holds in the trivial case where $\varphi$ is already anisotropic. In fact, $d(-)$ can take negative values for sufficiently degenerate forms. We do, however, have the following basic observation:

\begin{lemma} \label{lempositived} If $\psi$ and $\varphi$ are anisotropic quadratic forms of dimension $\geq 2$ over a field $K$, then $d(\varphi_{K(\psi)}) \geq 0$. 
\begin{proof} The claim is that $\witti{0}{\varphi_{K(\psi)}} \leq \frac{\mathrm{dim}(\varphi)}{2}$. By \cite[Prop. 7.29 and Prop. 7.31]{EKM}, we have an orthogonal decomposition $\varphi \simeq \varphi' \perp \varphi''$ in which $\varphi'$ is non-degenerate and $\varphi''$ is quasilinear (if $\mathrm{char}(K) \neq 2$, this means that $\varphi'' = 0$). Moreover, it follows from \cite[Lem. 8.10]{EKM} that
$$ \witti{0}{\varphi_{K(\psi)}} \leq \frac{\mathrm{dim}(\varphi')}{2} + \witti{0}{\varphi''_{K(\psi)}}.$$
The problem is therefore reduced to the case where $\varphi$ is quasilinear. If $\psi$ is also quasilinear, then a proof of the needed claim can be found in \cite[Lem. 2.31 (1)]{Scully1}. If $\psi$ is not quasilinear, then $K(\psi)/K$ is a separably generated extension (i.e., the quadric defined by the vanishing of $\psi$ is generically smooth). In particular, we have $\witti{0}{\varphi_{K(\psi)}} = 0$ by \cite[Prop. 5.3]{Hoffmann2}, and so the statement holds trivially in this case. \end{proof} \end{lemma}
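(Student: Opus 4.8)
The plan is to prove Lemma~\ref{lempositived} by successively reducing to the simplest possible configuration, namely the case where both $\varphi$ and $\psi$ are quasilinear, and then invoking the cited result. First I would restate the claim in the equivalent form $\witti{0}{\varphi_{K(\psi)}} \leq \frac{\mathrm{dim}(\varphi)}{2}$, which is exactly what $d(\varphi_{K(\psi)}) \geq 0$ asserts. The opening move is a standard structural one: by \cite[Prop. 7.29 and Prop. 7.31]{EKM} any quadratic form over $K$ admits an orthogonal decomposition $\varphi \simeq \varphi' \perp \varphi''$ with $\varphi'$ non-degenerate and $\varphi''$ quasilinear (in characteristic $\neq 2$ the quasilinear summand is zero, so the lemma is classical there and nothing remains to do). This reduces our attention to controlling the isotropy index of each summand separately over $K(\psi)$.

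Next I would use subadditivity of the isotropy index under orthogonal sums, as recorded in \cite[Lem. 8.10]{EKM}, to get
$$ \witti{0}{\varphi_{K(\psi)}} \leq \witti{0}{\varphi'_{K(\psi)}} + \witti{0}{\varphi''_{K(\psi)}}. $$
For the non-degenerate summand $\varphi'$, the isotropy index is bounded by $\frac{\mathrm{dim}(\varphi')}{2}$ purely for the trivial reason that a non-degenerate form has no more than half-dimensional totally isotropic subspaces over any field. Combining these two facts yields
$$ \witti{0}{\varphi_{K(\psi)}} \leq \frac{\mathrm{dim}(\varphi')}{2} + \witti{0}{\varphi''_{K(\psi)}}, $$
so the whole problem collapses to proving the bound $\witti{0}{\varphi''_{K(\psi)}} \leq \frac{\mathrm{dim}(\varphi'')}{2}$ for the quasilinear form $\varphi''$; i.e., we may assume from the outset that $\varphi$ itself is quasilinear.

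With $\varphi$ quasilinear, I would split on the nature of $\psi$. If $\psi$ is \emph{not} quasilinear, then the quadric $\lbrace \psi = 0 \rbrace$ is generically smooth, so $K(\psi)/K$ is separably generated, and by \cite[Prop. 5.3]{Hoffmann2} anisotropic forms stay anisotropic: $\witti{0}{\varphi_{K(\psi)}} = 0$, which is certainly $\leq \frac{\mathrm{dim}(\varphi)}{2}$. If $\psi$ is quasilinear as well, then we are squarely in the quasilinear theory and the desired inequality $\witti{0}{\varphi_{K(\psi)}} \leq \frac{\mathrm{dim}(\varphi)}{2}$ is precisely \cite[Lem. 2.31 (1)]{Scully1}, so we simply cite it. The only place where any real content enters is that last citation, and the main (mild) obstacle is making sure the orthogonal-decomposition and subadditivity inputs are applied in the correct generality — in particular that $\varphi''$ being quasilinear is preserved under the scalar extension $K(\psi)/K$, which is immediate — after which the argument is entirely bookkeeping.
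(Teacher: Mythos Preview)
Your proposal is correct and follows essentially the same route as the paper's own proof: decompose $\varphi$ into a non-degenerate part and a quasilinear part via \cite[Prop.~7.29, 7.31]{EKM}, apply \cite[Lem.~8.10]{EKM} together with the trivial half-dimension bound on the non-degenerate summand to reduce to the quasilinear case, and then split on whether $\psi$ is quasilinear, citing \cite[Lem.~2.31~(1)]{Scully1} or \cite[Prop.~5.3]{Hoffmann2} accordingly. The only cosmetic differences are that you spell out the intermediate subadditivity inequality before bounding $\witti{0}{\varphi'_{K(\psi)}}$, and you treat the two cases on $\psi$ in the opposite order.
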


\begin{remark} Note that, in the statement of Conjecture \ref{mainconj}, the integer $k$ is nothing else but $d(q_{F(p)})$. The lemma therefore confirms that $k \geq 0$. \end{remark}

We now specialize to the quasilinear setting. As alluded to in the proof of Lemma \ref{lempositived}, anisotropic quasilinear quadratic forms remain anisotropic under separably generated field extensions (\cite[Prop. 5.3]{Hoffmann2}). In particular, we have:

\begin{lemma} \label{lemdsep} Let $\varphi$ be a quasilinear quadratic form over $F$. If $L$ is a separably generated field extension of $F$, then $d(\varphi_L) = d(\varphi)$. \end{lemma}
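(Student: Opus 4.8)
The plan is to reduce the equality $d(\varphi_L) = d(\varphi)$ to the statement that the \emph{anisotropic part} of $\varphi$ stays anisotropic over $L$, which is exactly the content of \cite[Prop. 5.3]{Hoffmann2}. First I would write the isotropic decomposition $V_\varphi = W \oplus U$, where $W$ is the (linear) subspace of isotropic vectors and the restriction of $\varphi$ to $U$ is a copy of $\anispart{\varphi}$; thus $\varphi \simeq \anispart{\varphi} \perp \witti{0}{\varphi}\cdot\form{0}$ as quasilinear forms, and $\witti{0}{\varphi} = \mathrm{dim}(W)$. Scalar extension to $L$ commutes with orthogonal sums, so $\varphi_L \simeq (\anispart{\varphi})_L \perp \witti{0}{\varphi}\cdot\form{0}$, which immediately gives $\witti{0}{\varphi_L} \geq \witti{0}{\varphi}$ and hence $\witti{0}{\varphi_L} = \witti{0}{\varphi} + \witti{0}{(\anispart{\varphi})_L}$, since the isotropy index is additive over orthogonal sums of quasilinear forms (the isotropic vectors of a direct sum of quasilinear forms are exactly the direct sum of the isotropic vectors of the summands — here one uses that $\form{0}$ contributes its whole space).

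Next I would invoke \cite[Prop. 5.3]{Hoffmann2}: since $\anispart{\varphi}$ is anisotropic and $L/F$ is separably generated, $(\anispart{\varphi})_L$ is again anisotropic, i.e. $\witti{0}{(\anispart{\varphi})_L} = 0$. Combining this with the previous display yields $\witti{0}{\varphi_L} = \witti{0}{\varphi}$. Since $\mathrm{dim}(\varphi_L) = \mathrm{dim}(\varphi)$, we conclude $d(\varphi_L) = \mathrm{dim}(\varphi_L) - 2\witti{0}{\varphi_L} = \mathrm{dim}(\varphi) - 2\witti{0}{\varphi} = d(\varphi)$, as desired.

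There is essentially no obstacle here: the only nontrivial input is the cited preservation-of-anisotropy result of Hoffmann, and everything else is the elementary bookkeeping of quasilinear isotropic decomposition recalled in \S\ref{secknebusch}. If one wanted to be maximally economical, one could even skip the explicit decomposition and argue directly that the space of isotropic vectors of $\varphi_L$ is the $L$-span of the space of isotropic vectors of $\varphi$ — this is because a basis of $V_\varphi$ adapted to the decomposition $W \oplus U$ diagonalizes $\varphi$ with the first $\mathrm{dim}(W)$ entries zero, and over $L$ the remaining diagonal form is anisotropic by the cited proposition, so no new isotropic directions can appear. Either phrasing works; I would present the orthogonal-sum version as it makes the additivity of $\witti{0}{-}$ transparent.
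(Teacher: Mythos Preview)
Your proposal is correct and matches the paper's approach: the paper does not give an explicit proof, but simply states the lemma as an immediate consequence of \cite[Prop.~5.3]{Hoffmann2} (preservation of anisotropy under separably generated extensions), and your argument is precisely the natural elaboration of this, via the isotropic decomposition $\varphi \simeq \anispart{\varphi} \perp \witti{0}{\varphi}\cdot\form{0}$. One minor remark: the additivity of $\witti{0}{-}$ over orthogonal sums of quasilinear forms is \emph{not} true in general (an isotropic vector of $\psi_1\perp\psi_2$ need only satisfy $\psi_1(v_1)=\psi_2(v_2)$), but your parenthetical correctly identifies that it holds here because one summand is the zero form, so this causes no problem.
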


For later use, we will also need to observe the following interaction between $d(-)$ and the divisibility index invariant $\mathfrak{d}_0(-)$ (see \S \ref{secdivindices} above):

\begin{lemma} \label{lemdivisibilityofd} Let $\varphi$ and $\psi$ be anisotropic quasilinear quadratic forms of dimension $\geq 2$ over $F$. Then $d(\varphi_{F(\psi)})$ is divisible by $2^{\mathfrak{d}_0(\varphi)}$. 
\begin{proof} Let $\varphi \simeq \pi \otimes \sigma$, with $\pi$ a quasi-Pfister form. We need to show that $\mathrm{dim}(\pi)$ divides $d(\varphi_{F(\psi)})$. There are two cases to consider:\\

\noindent {\it Case 1.} If $\pi_{F(\psi)}$ is isotropic, then (since $\pi$ is a quasi-Pfister form) we have $\witti{0}{\pi_{F(\psi)}} = \frac{\mathrm{dim}(\pi)}{2}$ (see \cite[Cor. 7.9]{Hoffmann2}). In particular, we have
$$ \witti{0}{\varphi_{F(\psi)}} \geq \frac{\mathrm{dim}(\pi)}{2} \mathrm{dim}(\sigma) = \frac{\mathrm{dim}(\varphi)}{2}. $$
By Lemma \ref{lempositived}, it follows that $d(\varphi_{F(\psi)}) = 0$, and so the statement holds trivially. \\

\noindent {\it Case 2.} If $\pi_{F(\psi)}$ is anisotropic, then it is also a divisor of $\anispart{(\varphi_{F(\psi)})}$ by \cite[Prop. 4.19]{Hoffmann2}. Since
\begin{eqnarray*} d(\varphi_{F(\psi)}) &=& \mathrm{dim}(\varphi) - 2\witti{0}{\varphi_{F(\psi)}}\\
              &=& 2 \mathrm{dim}\big(\anispart{(\varphi_{F(\psi)})}\big) - \mathrm{dim}(\varphi), \end{eqnarray*}
(recall that $\mathrm{dim}(\anispart{\eta}) = \mathrm{dim}(\eta) - \witti{0}{\eta}$ in the quasilinear setting) the claim again holds. \end{proof} \end{lemma}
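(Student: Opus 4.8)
The plan is to reduce the assertion to a divisibility among dimensions. Fix an anisotropic quasi-Pfister form $\pi$ of dimension $2^{\mathfrak{d}_0(\varphi)}$ which divides $\varphi$, and write $\varphi \simeq \pi \otimes \sigma$ for some quasilinear form $\sigma$. The goal then becomes: $\mathrm{dim}(\pi)$ divides $d(\varphi_{F(\psi)})$. I would split the argument into two cases according to whether $\pi$ remains anisotropic after scalar extension to $F(\psi)$.

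Suppose first that $\pi_{F(\psi)}$ is isotropic. Since $\pi$ is a quasi-Pfister form, an isotropic scalar extension of it is as isotropic as possible: $\witti{0}{\pi_{F(\psi)}} = \tfrac{1}{2}\mathrm{dim}(\pi)$ (this roundness property of quasi-Pfister forms is standard -- see \cite{Hoffmann2}). Tensoring the isotropic decomposition of $\pi_{F(\psi)}$ with $\sigma_{F(\psi)}$ then exhibits a totally isotropic subspace of $\varphi_{F(\psi)}$ of dimension at least $\tfrac{1}{2}\mathrm{dim}(\pi)\,\mathrm{dim}(\sigma) = \tfrac{1}{2}\mathrm{dim}(\varphi)$. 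On the other hand, Lemma \ref{lempositived} gives $\witti{0}{\varphi_{F(\psi)}} \leq \tfrac{1}{2}\mathrm{dim}(\varphi)$, so equality must hold and hence $d(\varphi_{F(\psi)}) = 0$, which is divisible by $\mathrm{dim}(\pi)$.

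Now suppose $\pi_{F(\psi)}$ is anisotropic. Then $\pi_{F(\psi)}$ is an anisotropic quasi-Pfister form dividing $\varphi_{F(\psi)}$, and hence it divides the anisotropic part $\anispart{(\varphi_{F(\psi)})}$ as well (using that anisotropic quasi-Pfister divisors pass to anisotropic parts, as in the proof of Lemma \ref{lemdivindexineq}, via \cite{Hoffmann2}). In particular $\mathrm{dim}(\pi)$ divides $\mathrm{dim}\big(\anispart{(\varphi_{F(\psi)})}\big)$. Invoking the quasilinear dimension formula $\mathrm{dim}(\anispart{\eta}) = \mathrm{dim}(\eta) - \witti{0}{\eta}$, one rewrites $d(\varphi_{F(\psi)}) = \mathrm{dim}(\varphi) - 2\witti{0}{\varphi_{F(\psi)}} = 2\,\mathrm{dim}\big(\anispart{(\varphi_{F(\psi)})}\big) - \mathrm{dim}(\varphi)$; since $\mathrm{dim}(\pi)$ divides both $\mathrm{dim}\big(\anispart{(\varphi_{F(\psi)})}\big)$ and $\mathrm{dim}(\varphi) = \mathrm{dim}(\pi)\,\mathrm{dim}(\sigma)$, the claimed divisibility follows.

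I do not anticipate a genuine obstacle here; the only points requiring care are the case split on the isotropy of $\pi_{F(\psi)}$ and the appeal to roundness of quasi-Pfister forms, and everything else is bookkeeping with the quasilinear normalizations of dimension and isotropy index. If anything, the subtle point is remembering that $d(-)$ is \emph{not} the dimension of the anisotropic part in the quasilinear setting, which is precisely why the second case must go through the explicit rewriting of $d$ above rather than a one-line appeal to Lemma \ref{lemdivindexineq}.
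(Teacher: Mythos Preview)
Your proof is correct and follows essentially the same approach as the paper's own proof: the same case split on the isotropy of $\pi_{F(\psi)}$, the same appeal to the roundness of quasi-Pfister forms (\cite[Cor. 7.9]{Hoffmann2}) in Case 1, and the same use of \cite[Prop. 4.19]{Hoffmann2} together with the identity $d(\varphi_{F(\psi)}) = 2\,\mathrm{dim}\big(\anispart{(\varphi_{F(\psi)})}\big) - \mathrm{dim}(\varphi)$ in Case 2. If anything, you make the final divisibility step in Case 2 slightly more explicit than the paper does.
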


Our goal now is to prove Lemma \ref{RefinedDlem} below. We first make two simple observations:

\begin{lemma} \label{Dlem} Let $\sigma$, $\tau$ and $\phi$ be quasilinear quadratic forms over $F$ such that $\sigma \subset \tau \subset \varphi$. Let $a= \mathrm{dim}(\tau) - \mathrm{dim}(\sigma)$ and $b = \mathrm{dim}(\varphi) - \mathrm{dim}(\tau)$. Then
\begin{enumerate} \item $d(\sigma) \leq d(\tau) + a$, and
\item $d(\varphi) \leq d(\tau) + b$. \end{enumerate}
\begin{proof} (1) Since $\sigma$ is a codimension-$a$ subform of $\tau$, $\witti{0}{\sigma} \geq \witti{0}{\tau} - a$ (intersect the subspace of all isotropic vectors in $V_\tau$ with the image of $V_\sigma$ under the embedding $\sigma \subset \tau$). Thus:
\begin{eqnarray*} d(\sigma) &=& \mathrm{dim}(\sigma) - 2\witti{0}{\sigma}\\
                            &=& \mathrm{dim}(\tau) -a -2 \witti{0}{\sigma} \\
                            &\leq & \mathrm{dim}(\tau) - a - 2(\witti{0}{\tau} -a) \\
                            &=& \big(\mathrm{dim}(\tau) - 2\witti{0}{\tau}\big) + a\\
                            &=& d(\tau) + a. \end{eqnarray*}
(2) Since $\tau$ is a subform of $\varphi$, we obviously have $\witti{0}{\varphi} \geq \witti{0}{\tau}$. Hence
\begin{eqnarray*} d(\varphi) &=& \mathrm{dim}(\varphi) - 2\witti{0}{\varphi}\\
                            &=& \mathrm{dim}(\tau) +b  -2 \witti{0}{\varphi} \\
                            &\leq & \mathrm{dim}(\tau) + b - 2(\witti{0}{\tau}) \\
                            &=& \big(\mathrm{dim}(\tau) - 2\witti{0}{\tau}\big) + b\\
                            &=& d(\tau) + b, \end{eqnarray*}
 as claimed.\end{proof} \end{lemma}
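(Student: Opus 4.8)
The plan is to unwind the definition $d(\phi) = \mathrm{dim}(\phi) - 2\witti{0}{\phi}$ and to track how the isotropy index changes along the chain $\sigma \subset \tau \subset \varphi$. The decisive point is that, for quasilinear forms, the isotropy index is simply the dimension of the linear subspace of isotropic vectors, so both comparisons reduce to elementary linear algebra. In both parts, after bounding $\witti{0}{\sigma}$ (resp. $\witti{0}{\varphi}$) against $\witti{0}{\tau}$, I substitute $\mathrm{dim}(\sigma) = \mathrm{dim}(\tau) - a$ (resp. $\mathrm{dim}(\varphi) = \mathrm{dim}(\tau) + b$) into the definition of $d(-)$ and read off the claim.

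For part (1), I fix an embedding realizing $\sigma \subset \tau$ and regard $V_\sigma$ as a codimension-$a$ subspace of $V_\tau$. Writing $W \subseteq V_\tau$ for the subspace of isotropic vectors of $\tau$, the isotropic vectors of $\sigma$ are precisely those of $V_\sigma$ lying in $W$, i.e. $V_\sigma \cap W$; since $V_\sigma$ has codimension $a$ in $V_\tau$, the standard dimension count $\mathrm{dim}(V_\sigma \cap W) \geq \mathrm{dim}(V_\sigma) + \mathrm{dim}(W) - \mathrm{dim}(V_\tau)$ gives $\witti{0}{\sigma} \geq \witti{0}{\tau} - a$. Hence $d(\sigma) = \mathrm{dim}(\tau) - a - 2\witti{0}{\sigma} \leq \mathrm{dim}(\tau) - a - 2(\witti{0}{\tau} - a) = d(\tau) + a$.

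For part (2), the estimate is even softer: an embedding $\tau \subset \varphi$ carries any totally isotropic subspace of $V_\tau$ injectively onto one of the same dimension in $V_\varphi$, so $\witti{0}{\varphi} \geq \witti{0}{\tau}$. Therefore $d(\varphi) = \mathrm{dim}(\tau) + b - 2\witti{0}{\varphi} \leq \mathrm{dim}(\tau) + b - 2\witti{0}{\tau} = d(\tau) + b$.

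There is essentially no obstacle here; the only point that deserves a word of caution is that the argument genuinely exploits quasilinearity, both through the identification of $\witti{0}{-}$ with the dimension of a linear subspace and (implicitly) through the formula $\mathrm{dim}(\anispart{\phi}) = \mathrm{dim}(\phi) - \witti{0}{\phi}$ recorded earlier, so one should not expect these inequalities to survive verbatim for arbitrary (non-quasilinear) forms. Everything beyond that is a routine substitution into the definition of $d(-)$.
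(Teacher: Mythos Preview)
Your proof is correct and follows exactly the same line as the paper's: for (1) you intersect the isotropic subspace of $\tau$ with $V_\sigma$ to get $\witti{0}{\sigma} \geq \witti{0}{\tau} - a$, and for (2) you use the trivial monotonicity $\witti{0}{\varphi} \geq \witti{0}{\tau}$, then in both cases substitute into the definition of $d(-)$. The only extra content in your write-up is the explicit dimension formula and the closing remark about quasilinearity, neither of which departs from the paper's argument.
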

 
\begin{lemma} \label{lemtensorinclusion} Let $\sigma$ and $\nu$ be anisotropic quasilinear quadratic forms over $F$ and let $\varphi = \anispart{(\sigma \otimes \nu)}$.
\begin{enumerate} \item If $a \in D(\sigma)$ is non-zero, then $a\nu \subset \varphi$.
\item If $b \in D(\nu)$ is non-zero, then $b \sigma \subset \varphi$. \end{enumerate}
\begin{proof} By symmetry, it suffices to prove (1). But if $a$ is a non-zero element of $D(\sigma)$, then
$$ D(a\nu) = aD(\nu) \subset D(\sigma \otimes \nu) = D(\varphi),$$
so the claim follows from Proposition \ref{propsubformtheorem}.\end{proof} \end{lemma}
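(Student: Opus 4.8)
The plan is to obtain both inclusions as an immediate consequence of the quasilinear representation theorem, Proposition~\ref{propsubformtheorem}. First I would observe that the tensor product of quasilinear forms is commutative, so that $\sigma \otimes \nu \simeq \nu \otimes \sigma$; swapping the roles of $\sigma$ and $\nu$ then interchanges assertions (1) and (2), and it therefore suffices to prove (1). So fix a non-zero $a \in D(\sigma)$. Since $\nu$ is anisotropic and $a \neq 0$, the form $a\nu$ is anisotropic, hence $\anispart{(a\nu)} = a\nu$; and $\varphi = \anispart{(\sigma \otimes \nu)}$ is anisotropic by construction, so $\anispart{\varphi} = \varphi$. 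Thus, by Proposition~\ref{propsubformtheorem}, the desired inclusion $a\nu \subset \varphi$ is equivalent to the set-theoretic inclusion $D(a\nu) \subseteq D(\varphi)$, and it is this that I would verify.

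The verification is the routine computation $D(a\nu) = aD(\nu) \subseteq D(\sigma \otimes \nu) = D(\varphi)$. Here the outer equalities are clear: the first from the definition of a scalar multiple of a diagonal form, and the second because passing to the anisotropic part leaves the set of represented elements unchanged. The middle inclusion holds because $D(\sigma \otimes \nu)$ contains every product $st$ with $s \in D(\sigma)$ and $t \in D(\nu)$ (write $\sigma$ and $\nu$ in diagonal form and expand the products of the corresponding diagonal entries; equivalently, $D(\sigma \otimes \nu) \cup \lbrace 0 \rbrace$ is the $F^2$-subspace of $F$ generated by such products). Specializing to $s = a$ gives $aD(\nu) \subseteq D(\sigma \otimes \nu)$, which is exactly what is needed.

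Since every step is elementary, there is no genuine obstacle to overcome here; the only points to keep track of are the two bookkeeping observations that $a\nu$ and $\varphi$ are already anisotropic (so that Proposition~\ref{propsubformtheorem} applies directly, with no further passage to anisotropic parts), and the description of $D(\sigma \otimes \nu)$ in terms of products of represented elements. This lemma, together with Lemma~\ref{Dlem}, provides exactly the elementary manipulations of the invariant $d(-)$ and of subform relations that I expect to feed into the proof of Lemma~\ref{RefinedDlem}.
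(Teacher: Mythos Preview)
Your proposal is correct and follows essentially the same approach as the paper: reduce to (1) by symmetry, then use the chain $D(a\nu) = aD(\nu) \subset D(\sigma \otimes \nu) = D(\varphi)$ together with Proposition~\ref{propsubformtheorem}. The only difference is that you spell out the bookkeeping (anisotropy of $a\nu$ and of $\varphi$, and the description of $D(\sigma\otimes\nu)$) more explicitly than the paper does.
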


We can now prove the following:

\begin{lemma} \label{RefinedDlem} Let $\sigma$ and $\nu$ be anisotropic quasilinear quadratic forms over $F$ with $\mathrm{dim}(\nu) \geq 2$, let $\varphi= \anispart{(\sigma \otimes \nu)}$ and let $l = \mathrm{dim}(\varphi) - \mathrm{dim}(\sigma)$. Then $l\geq 0$, and there exists an integer $0 \leq i \leq l$ such that
\begin{enumerate} \item $d(\sigma_{F(\nu)}) \leq i$, and
\item $d(\varphi_{F(\nu)}) \leq l - i$. 
\end{enumerate}
\begin{proof} Multiplying $\nu$ by a scalar if necessary, we can assume that $1 \in D(\nu)$. We can then write $\nu \simeq \form{1,a_1,\hdots,a_n}$ for some $a_1,\hdots,a_n \in F$. Let $X = (X_1,\hdots,X_n)$ be an $n$-tuple of algebraically independent variables over $F$, and set $K = F(X)$ and $L = K(\sqrt{\nu'(X)})$, where $\nu'(X) = a_1X_1^2 + \cdots + a_nX_n^2$. The field $L$ is $F$-isomorphic to the function field $F[\nu]$ of the affine quadric defined by the vanishing of $\nu$ (see \S \ref{secff} above). Now, since $1 \in D(\nu)$, we have $\sigma \subset \varphi$ by Lemma \ref{lemtensorinclusion} (2). In particular, we have $l \geq 0$. We now claim that there exists a form $\tau$ over $K$ such that $\sigma_K \subset \tau \subset \varphi_K$ and $d(\tau_L) = 0$. If true, this implies (via Lemma \ref{Dlem}) that
\begin{enumerate} \item[(1')] $d(\sigma_L) \leq i$, and
\item[(2')] $d(\varphi_L) \leq l - i$ \end{enumerate}
for some $0 \leq i \leq l$. This is equivalent to the assertion of the lemma, since (a) $L$ is $F$-isomorphic to $F[\nu]$, (b) $F[\nu]$ is $F$-isomorphic to a degree-1 purely transcendental extension of $F(\nu)$, and (c) $d(-)$ is invariant under purely transcendental base change (Lemma \ref{lemdsep}).

It thus remains to prove the existence of the form $\tau$. Let $\sigma_K \subset \tau \subset \varphi_K$. By definition, we have $d(\tau_L) = 0$ if and only if $\witti{0}{\tau_L} = \frac{\mathrm{dim}(\tau)}{2}$. Now $\tau$ is anisotropic (because $\varphi_K$ is), and we have $L = K(\sqrt{\nu'(X)})$. Thus, by Proposition \ref{propsimilarity}, our task is to show that $\tau$ can be chosen so that $\nu'(X) D(\tau) \subset D(\tau)$. We do this as follows: Consider the set $S$ of all subforms $\eta$ of $\varphi_K$ which contain $\sigma_K$ as a subform and which have the property that $\nu'(X)D(\eta) \subset D(\varphi_K)$. This set is non-empty, since it contains $\sigma_K$ by the definition of $\varphi$. We take $\tau$ to be an element of maximal dimension in $S$. It now remains to show that $\nu'(X)D(\tau) \subset D(\tau)$ for this choice of $\tau$. Suppose, for the sake of contradiction, that this is not the case. Then there exists $a \in D(\tau)$ such that $\nu'(X)a \notin D(\tau)$. Let $\tau' = \tau \perp \form{\nu'(X)a}$. We will show that $\tau' \in S$, thus contradicting the maximality of $\tau$. Note first that since $\nu'(X)a \notin D(\tau)$, $\tau'$ is anisotropic. Moreover, since $\nu'(X)a \in D(\varphi_K)$, we have
$$ D(\tau') = D(\tau) + K^2\nu'(X)a \subset D(\varphi_K), $$
so that $\tau' \subset \varphi_K$ by Proposition \ref{propsubformtheorem}. Now $\tau'$ obviously contains $\sigma_K$ as a subform (since $\tau$ does), and so the final step is to check that $\nu'(X)D(\tau') \subset D(\varphi_K)$. But
\begin{eqnarray*} \nu'(X)D(\tau') &=& \nu'(X)D(\tau) + K^2 \nu'(X)^2 a\\
                                  &=& \nu'(X)D(\tau) + K^2 a, \end{eqnarray*}
and since both $\nu'(X)D(\tau)$ and $K^2a$ lie in $D(\varphi_K)$ (the former since $\tau \in S$, and the latter because $a \in D(\tau)$), the claim is proved. \end{proof} \end{lemma}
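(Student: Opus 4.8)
The plan is to deduce the statement from the similarity-factor criterion of Proposition~\ref{propsimilarity} by passing to the function field of the \emph{affine} quadric attached to $\nu$, where a convenient ``generic'' similarity factor is available. After rescaling $\nu$ so that $1 \in D(\nu)$, Lemma~\ref{lemtensorinclusion}(2) gives $\sigma \subset \varphi$, hence $l \geq 0$. Writing $\nu \simeq \form{1,a_1,\ldots,a_n}$, I would set $K = F(X_1,\ldots,X_n)$, put $\nu'(X) = a_1X_1^2 + \cdots + a_nX_n^2$, and let $L = K(\sqrt{\nu'(X)})$, so that $L$ is $F$-isomorphic to $F[\nu]$ and hence to a degree-one purely transcendental extension of $F(\nu)$. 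By Lemma~\ref{lemdsep}, $d(-)$ is unchanged when passing between $F(\nu)$ and $L$, so it suffices to find $0 \leq i \leq l$ with $d(\sigma_L) \leq i$ and $d(\varphi_L) \leq l - i$.

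The crux is to produce a form $\tau$ over $K$ with $\sigma_K \subset \tau \subset \varphi_K$ and $d(\tau_L) = 0$. Granting this, put $i = \mathrm{dim}(\tau) - \mathrm{dim}(\sigma)$; then $l - i = \mathrm{dim}(\varphi) - \mathrm{dim}(\tau)$, and $0 \leq i \leq l$ because $\sigma_K \subset \tau \subset \varphi_K$. Applying Lemma~\ref{Dlem} over the field $L$ to the chain $\sigma_L \subset \tau_L \subset \varphi_L$ and using $d(\tau_L) = 0$ then yields $d(\sigma_L) \leq i$ and $d(\varphi_L) \leq l - i$, as required.

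To build $\tau$, I would use that $d(\tau_L) = 0$ is equivalent to $\witti{0}{\tau_L} = \frac{\mathrm{dim}(\tau)}{2}$, which, since $\tau$ is anisotropic (being a subform of $\varphi_K$) and $L = K(\sqrt{\nu'(X)})$ with $\nu'(X) \notin K^2$ (as $\nu$ is anisotropic of dimension $\geq 2$, hence not split), is equivalent by Proposition~\ref{propsimilarity} to the purely algebraic condition $\nu'(X)D(\tau) \subset D(\tau)$. This suggests a maximality argument: let $S$ be the set of subforms $\eta$ of $\varphi_K$ with $\sigma_K \subset \eta$ and $\nu'(X)D(\eta) \subset D(\varphi_K)$. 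One checks $\sigma_K \in S$ using $\nu'(X) \in D(\nu_K)$ together with $D(\sigma)D(\nu) \subset D(\sigma \otimes \nu)$, so $S \neq \emptyset$; choose $\tau \in S$ of maximal dimension. If $\nu'(X)D(\tau)$ were not contained in $D(\tau)$, one would pick $a \in D(\tau)$ with $\nu'(X)a \notin D(\tau)$ and form $\tau' = \tau \perp \form{\nu'(X)a}$: it is anisotropic, it satisfies $D(\tau') = D(\tau) + K^2\nu'(X)a \subset D(\varphi_K)$ (hence $\tau' \subset \varphi_K$ by Proposition~\ref{propsubformtheorem}), it contains $\sigma_K$, and $\nu'(X)D(\tau') = \nu'(X)D(\tau) + K^2a \subset D(\varphi_K)$ since $\nu'(X)^2 \in K^2$ and $a \in D(\tau)$. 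Thus $\tau' \in S$ with $\mathrm{dim}(\tau') > \mathrm{dim}(\tau)$, a contradiction; so $\nu'(X)D(\tau) \subset D(\tau)$ and $d(\tau_L) = 0$.

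I expect the only delicate point to be this last construction: recognizing that ``$d(\tau_L) = 0$'' should be reformulated via Proposition~\ref{propsimilarity} as a closure property of $D(\tau)$ under multiplication by $\nu'(X)$, and that working over the affine function field $F[\nu]$, realized as $K(\sqrt{\nu'(X)})$, rather than over $F(\nu)$ directly, is what makes a concrete similarity factor available. Everything else is bookkeeping with dimensions, the representation theorem (Proposition~\ref{propsubformtheorem}), and the invariance of $d(-)$ under purely transcendental base change.
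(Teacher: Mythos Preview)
Your proposal is correct and follows essentially the same argument as the paper's proof: rescale so $1 \in D(\nu)$, pass to $L = K(\sqrt{\nu'(X)}) \cong F[\nu]$, reduce to finding $\tau$ with $\sigma_K \subset \tau \subset \varphi_K$ and $d(\tau_L)=0$, reformulate this via Proposition~\ref{propsimilarity} as $\nu'(X)D(\tau) \subset D(\tau)$, and obtain such a $\tau$ by the same maximality argument in the set $S$. Your explicit remark that $\nu'(X) \notin K^2$ (needed to invoke Proposition~\ref{propsimilarity}) and your spelled-out choice $i = \dim(\tau) - \dim(\sigma)$ are minor clarifications, but the route is identical.
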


\section{A refinement of the main conjecture in the quasilinear case}

The quasilinear case of Conjecture \ref{mainconj} is known to hold in the extreme case where $k=0$. In fact, rather more is true in this situation. Recall from \S \ref{secnormform} above that to any anisotropic quasilinear quadratic form $p$ over $F$, we can associate an anisotropic quasi-Pfister form $\normform{p}$ which contains a subform similar to $p$. We write $\mathrm{lndeg}(p)$ for the integer $\mathrm{log}_2\big(\mathrm{dim}(\normform{p})\big)$. The following result is due (independently) to Hoffmann and Laghribi:

\begin{proposition}[{see \cite[Thm. 1.5]{Laghribi1}, \cite[Thm. 7.7]{Hoffmann2}}] \label{propquasihyp} Let $p$ and $q$ be anisotropic quasilinear quadratic forms of dimension $\geq 2$ over $F$. If $\witti{0}{q_{F(p)}} = \frac{\mathrm{dim}(q)}{2}$, then $q$ is divisible by $\normform{p}$. In particular $\mathrm{dim}(q)$ is divisible by $2^{\mathrm{lndeg}(p)}$. \end{proposition}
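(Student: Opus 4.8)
The plan is to convert the hypothesis $\witti{0}{q_{F(p)}} = \tfrac12\mydim{q}$ into a multiplicative closure property of the value set $D(q)$, exploiting the explicit model of the function field of the affine quadric attached to $p$. First I would normalise so that $1 \in D(p)$ and write $p \simeq \form{1,b_1,\hdots,b_m}$, noting that $b_1,\hdots,b_m \in F \setminus F^2$ by anisotropy of $p$, and that neither $F(p)$ nor (up to isometry) $\normform{p}$ is changed by such scaling. Let $X_1,\hdots,X_m$ be algebraically independent over $F$, put $K = F(X_1,\hdots,X_m)$ and $\alpha = b_1X_1^2 + \cdots + b_mX_m^2 \in K$; by \S\ref{secff} we have $F[p] \simeq K(\sqrt{\alpha})$. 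Since $F[p]/F(p)$ and $K/F$ are purely transcendental, two applications of Lemma \ref{lemdsep} give $d(q_{F[p]}) = d(q_{F(p)}) = 0$ and $d(q_K) = d(q) = 0$; in particular $q_K$ is anisotropic and $\witti{0}{q_{K(\sqrt{\alpha})}} = \tfrac12\mydim{q} = \tfrac12\mydim{q_K}$. As $\alpha \notin K^2$ — a degree-two polynomial can be a square in $F[X_1,\hdots,X_m]$ only if every $b_i$ lies in $F^2$, which anisotropy of $p$ forbids — Proposition \ref{propsimilarity}, applied over $K$ to the anisotropic form $q_K$ and the scalar $\alpha$, yields $\alpha D(q_K) \subseteq D(q_K)$.

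The core step is to push this closure property back down to $F$: I would show that $b_iD(q) \subseteq D(q)$ for every $i$. Fixing a diagonalisation $q \simeq \form{a_1,\hdots,a_n}$ over $F$ and an arbitrary $d \in D(q)$, we have $\alpha d = \sum_i(db_i)X_i^2 \in D(q_K) \cap F[X_1,\hdots,X_m]$, so by the polynomial form of the Cassels--Pfister representation theorem (\cite[Thm.~17.12]{EKM}) one can write $\alpha d = \sum_{j=1}^n a_jf_j^2$ with $f_j \in F[X_1,\hdots,X_m]$. A short argument — using the anisotropy of $q_K$ to exclude cancellation among the leading forms of the $f_j$, and the anisotropy of $q$ to kill constant terms — shows that each $f_j$ is a homogeneous linear form, say $f_j = \sum_i c_{ji}X_i$ with $c_{ji} \in F$. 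Since we are in characteristic $2$, $f_j^2 = \sum_i c_{ji}^2X_i^2$ (all cross terms vanish), and comparing coefficients of $X_i^2$ in $\alpha d = \sum_j a_jf_j^2$ gives $db_i = \sum_j a_jc_{ji}^2 = q(c_{1i},\hdots,c_{ni}) \in D(q)$ for each $i$, as required.

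It then remains to assemble the conclusion. The value set $D(q)$ is an $F^2$-subspace of $F$ which, by the previous step, is stable under multiplication by each $b_i$, hence under multiplication by the whole field $N(p) = F^2(b_1,\hdots,b_m)$; thus $D(q)$ is a vector space over $N(p)$. Choosing an $N(p)$-basis $e_1,\hdots,e_t$ of $D(q)$ and setting $\sigma = \form{e_1,\hdots,e_t}$, one computes $D(\normform{p} \otimes \sigma) = \sum_{l=1}^t e_lD(\normform{p}) = \sum_{l=1}^t e_lN(p) = D(q)$, so $\anispart{(\normform{p} \otimes \sigma)} \simeq \anispart{q} = q$ by Proposition \ref{propsubformtheorem}. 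Comparing $F^2$-dimensions, $\mydim{(\normform{p} \otimes \sigma)} = 2^{\mathrm{lndeg}(p)}t = \dim_{F^2}D(q) = \mydim{q}$ (the last equality since $q$ is anisotropic), so $\normform{p} \otimes \sigma$ is anisotropic, hence isometric to $q$. Therefore $q$ is divisible by $\normform{p}$, and in particular $\mydim{q}$ is divisible by $\mydim{\normform{p}} = 2^{\mathrm{lndeg}(p)}$.

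I expect the main obstacle to be the degree-and-coefficient bookkeeping in the second paragraph: passing from the single relation $\alpha d \in D(q_K)$ to the $m$ separate relations $db_i \in D(q)$. Its key ingredients are the characteristic-$2$ identity $\big(\sum_i c_iX_i\big)^2 = \sum_i c_i^2X_i^2$ and the anisotropy of $q_K$ (which forces the $f_j$ to be linear). The surrounding arguments are formal manipulations with value sets via the representation theorem (Proposition \ref{propsubformtheorem}) and the criterion for similarity factors (Proposition \ref{propsimilarity}).
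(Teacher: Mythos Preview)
The paper does not give its own proof of this proposition; it is quoted from the literature (Laghribi and Hoffmann). Your argument is a correct direct proof, and it follows essentially the approach of those references: pass to the explicit quadratic extension $K(\sqrt{\alpha})$ modelling $F[p]$, use Proposition~\ref{propsimilarity} to get $\alpha \in G(q_K)^*$, then descend via a Cassels--Pfister/coefficient-comparison argument to conclude that each $b_i$ lies in $G(q)^*$, and finally interpret $D(q)$ as an $N(p)$-module to exhibit the divisibility.

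One slip to fix: you write ``$d(q_K) = d(q) = 0$'', but since $q$ is anisotropic we have $d(q) = \mydim{q} \neq 0$. What you need (and what you in fact use immediately afterwards) is that $q_K$ is anisotropic, i.e.\ $\witti{0}{q_K} = \witti{0}{q} = 0$; this is what Lemma~\ref{lemdsep} (or \cite[Prop.~5.3]{Hoffmann2}) gives for the purely transcendental extension $K/F$. The rest of the argument --- in particular the degree bookkeeping forcing the $f_j$ to be linear (using that in characteristic~$2$ the square of a sum of homogeneous pieces is the sum of their squares, so anisotropy of $q$ over $F$ and over $K$ kills the degree-$0$ and degree-$\geq 2$ parts respectively) and the $N(p)$-module interpretation of $D(q)$ --- is sound.
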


If $s$ is the unique non-negative integer such that $2^s < \mathrm{dim}(p) \leq 2^{s+1}$, then $\mathrm{lndeg}(p) \geq s+1$, with equality holding if and only if $p$ is a quasi-Pfister neighbour (Lemma \ref{lemnormdegree} above). The integer $\mathrm{lndeg}(p)$ thus measures how far $p$ is from being a quasi-Pfister neighbour. If $p$ is ``generic'', for example, $\mathrm{lndeg}(p)$ takes its largest possible value of $\mathrm{dim}(p) - 1$:

\begin{example} Consider the form $p = \form{X_0,X_1,\hdots,X_n}$ over the rational function field $F(X_0,X_1, \hdots,X_n)$. Then $\mathrm{lndeg}(p) = n$. Indeed, $\normform{p}$ is isomorphic to the (evidently anisotropic) quasi-Pfister form $\pfister{X_0X_1,X_0X_2,\hdots,X_0X_n}$ in this case. \end{example}

Motivated by Proposition \ref{propquasihyp}, we consider the following modification of our problem:

\begin{problem} \label{refinedprob} Let $p$ and $q$ be anisotropic quasilinear quadratic forms of dimension $\geq 2$ over $F$, and let $k = \mathrm{dim}(q) - 2 \witti{0}{q_{F(p)}}$. Is is true that
$$ \mathrm{dim}(q)  = a2^{\mathrm{lndeg}(p)} + \epsilon $$
for some non-negative integer $a$ and integer $-k \leq \epsilon \leq k$? \end{problem}

In view of the above discussion, a positive answer to this problem for the pair $(p,q)$ immediately implies that Conjecture \ref{mainconj} holds for the same pair of forms. Unlike Conjecture \ref{mainconj}, however, one can only expect a positive answer to Problem \ref{refinedprob} in certain situations. This is already clear from consideration of the case where $q=p$:

\begin{lemma} \label{lemp=q} The $q =p$ case of Problem \ref{refinedprob} has a positive answer if and only if $p$ is a quasi-Pfister neighbour.
\begin{proof} Suppose first that $\mathrm{dim}(p) = a2^{\mathrm{lndeg}(p)} + \epsilon$ for integers $a \geq 0$ and $-k \leq \epsilon \leq k$. Since $q=p$, we have
$$ k = \mathrm{dim}(p) -2\witti{1}{p} < \mathrm{dim}(p). $$
This implies that $a \geq 1$, so that
$$ \mathrm{dim}(p) \geq 2^{\mathrm{lndeg}(p)} - k > 2^{\mathrm{lndeg}(p)} - \mathrm{dim}(p). $$
In other words, we have
$$ \mathrm{dim}(p) > 2^{\mathrm{lndeg}(p)-1}. $$
By Lemma \ref{lemnormdegree}, this is exactly what it means for $p$ to be a quasi-Pfister neighbour. Conversely, if $p$ is a quasi-Pfister neighbour, then $p$ has maximal splitting (see \S \ref{secknebusch} above). The reader will readily observe that this simply means that $\mathrm{dim}(p) = 2^{\mathrm{lndeg}(p)} - k$.  \end{proof} \end{lemma}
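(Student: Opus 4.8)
The plan is to prove the two implications separately, with Lemma \ref{lemnormdegree} serving as the dictionary between the arithmetic of $\mathrm{dim}(p)$ and the notion of a quasi-Pfister neighbour. The only preliminary observation needed is that, when $q = p$, the integer $k$ of Problem \ref{refinedprob} equals $\mathrm{dim}(p) - 2\witti{1}{p}$, since $\witti{0}{p_{F(p)}} = \witti{1}{p}$ for anisotropic $p$. Two elementary bounds on $k$ will then do all the work: $k \geq 0$ (Lemma \ref{lempositived}), and $k < \mathrm{dim}(p)$ (because $p$ is isotropic over $F(p)$, so $\witti{1}{p} \geq 1$).

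For the forward implication, I would assume $\mathrm{dim}(p) = a 2^{\mathrm{lndeg}(p)} + \epsilon$ with $a \geq 0$ and $-k \leq \epsilon \leq k$, and aim to deduce $\mathrm{dim}(p) > 2^{\mathrm{lndeg}(p) - 1}$, which by Lemma \ref{lemnormdegree} is exactly the assertion that $p$ is a quasi-Pfister neighbour. The bound $k < \mathrm{dim}(p)$ rules out $a = 0$, so $a \geq 1$; then $\mathrm{dim}(p) \geq 2^{\mathrm{lndeg}(p)} + \epsilon \geq 2^{\mathrm{lndeg}(p)} - k > 2^{\mathrm{lndeg}(p)} - \mathrm{dim}(p)$, which rearranges to the desired inequality.

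For the converse, I would assume $p$ is a quasi-Pfister neighbour and let $s$ be the integer with $2^s < \mathrm{dim}(p) \leq 2^{s+1}$. Then $\mathrm{lndeg}(p) = s+1$ by Lemma \ref{lemnormdegree}, and $p$ has maximal splitting, i.e. $\witti{1}{p} = \mathrm{dim}(p) - 2^s$ (see \S\ref{secknebusch}). Substituting, $k = \mathrm{dim}(p) - 2\witti{1}{p} = 2^{s+1} - \mathrm{dim}(p) = 2^{\mathrm{lndeg}(p)} - \mathrm{dim}(p)$, so $\mathrm{dim}(p) = 1 \cdot 2^{\mathrm{lndeg}(p)} - k$; since $k \geq 0$, the choice $(a, \epsilon) = (1, -k)$ lies in the admissible range and gives a positive answer.

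I do not anticipate any genuine obstacle: once Lemma \ref{lemnormdegree} and the maximal-splitting property of quasi-Pfister neighbours are available, the argument is a short chain of inequalities. The one spot demanding a little care is the bookkeeping in the converse — keeping the two characterizations of ``quasi-Pfister neighbour'' (minimal norm degree versus maximal splitting) in sync, and checking that the resulting $\epsilon = -k$ genuinely satisfies $-k \leq \epsilon \leq k$, which is precisely where the sign of $k$ enters.
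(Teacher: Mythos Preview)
Your proposal is correct and follows essentially the same approach as the paper's own proof: both directions hinge on Lemma \ref{lemnormdegree} together with the bounds $0 \leq k < \mathrm{dim}(p)$, and the converse via maximal splitting yields $\mathrm{dim}(p) = 2^{\mathrm{lndeg}(p)} - k$ just as in the paper. If anything, you are slightly more explicit in the converse (spelling out $\mathrm{lndeg}(p)=s+1$, the computation of $k$, and the role of $k \geq 0$), whereas the paper leaves these details to the reader.
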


Nevertheless, we can still hope for a positive answer in many interesting cases. In particular, our main result (Theorem \ref{mainmain}) is that if $2^s < \mathrm{dim}(p) \leq 2^{s-1}$, then Problem \ref{refinedprob} has a positive answer as long as $k \leq 2^{s-1}$. Taking Lemma \ref{lemp=q} into account, we might expect that this can be improved to $k < 2^{s-1} + 2^{s-2}$ (but not further\footnote{When $q=p$, the inequality $k < 2^{s-1} + 2^{s-2}$ implies that $p$ is a Pfister neighbour (Theorem \ref{thmmaxsplitting}). This need not hold, however, if $k = 2^{s-1} + 2^{s-2}$; see \cite[Ex. 7.31]{Hoffmann2}.}) An improvement which takes into account the precise value of $\mathrm{dim}(p)$ is given in Theorem \ref{thmadditional} below. 

We now give an example which shows that, in the quasilinear case, the statement of Conjecture \ref{mainconj} is in some sense optimal. We work here with the norm degree invariant to indicate that the same applies to all our results in the direction of Problem \ref{refinedprob}. 

\begin{example} \label{exoptimal} Let $p$ be an anisotropic quasilinear quadratic form of dimension $\geq 2$ over a field $E$ of characteristic $2$. Let $\varphi$ be the anisotropic part of $\normform{p}$ over $E(p)$. By \cite[Cor. 4.10 and Cor. 7.9]{Hoffmann2}, $\varphi$ is a quasi-Pfister form of dimension $2^{\mathrm{lndeg}(p)-1}$. Moreover, there exists a subform $\tau \subset \normform{p}$ such that $\varphi \simeq \tau_{E(p)}$ by \cite[Lemma 5.1]{Hoffmann2}.  

Choose a non-negative integer $a$, a non-negative integer $k < 2^{\mathrm{lndeg}(p)}$ and let $\epsilon = k - 2l$ for some integer $0 \leq l \leq k/2$. Note that we have $\epsilon + l \geq 0$. Let $X = (X_1,\hdots,X_{a + \epsilon + l})$ be an $(a + \epsilon + l)$-tuple of algebraically independent variables over $E$ and let $F = E(X)$. Let $\sigma$ be a codimension-$l$ subform of $\normform{p}$ which contains $\tau$, and consider 
$$ q = (\normform{p})_F \otimes \form{X_1,\hdots,X_{a-1}} \perp X_a \sigma_F \perp \form{X_{a+1},\hdots,X_{a + \epsilon + l}}. $$
Since $F/E$ is a purely transcendental extension, $(\normform{p})_F$ is anisotropic. It is then clear that $q$ is also anisotropic. Note that we have $\mathrm{dim}(q) = a2^{\mathrm{lndeg}(p)} + \epsilon$. We claim that $\mathrm{dim}(q) - 2\witti{0}{q_{F(p)}} = k$ (here we write $F(p)$ for $F(p_F)$). Recall that $\varphi$ denotes the anisotropic part of $(\normform{p})_{E(p)}$. By construction, $\varphi$ is also the anisotropic part of $\sigma_{E(p)}$. Indeed, since $\tau \subset \sigma \subset \normform{p}$, Proposition \ref{propsubformtheorem} implies that
$$ \varphi \simeq \tau_{E(p)} \subset \anispart{(\sigma_{E(p)})} \subset \anispart{(\normform{p}_{E(p)})} =\varphi, $$
whence the claim. Now, the form
$$\eta =  (\varphi_{F(p)} \otimes \form{X_1,\hdots,X_a}) \perp \form{X_{a+1},\hdots,X_{a + \epsilon + l}}$$
is obviously anisotropic, and is therefore equal to $\anispart{(q_{F(p)})}$ by the preceding discussion. In particular, we have
\begin{eqnarray*} \mathrm{dim}(q) - 2\witti{0}{q_{F(p)}} &=& 2\mathrm{dim}(\eta) - \mathrm{dim}(q)\\
&=& 2(a2^{\mathrm{lndeg}(p)-1} + \epsilon + l) - (a2^{\mathrm{lndeg}(p)} + \epsilon)\\
&=& \epsilon + 2l\\
&=& k, \end{eqnarray*}
as desired. Observe now that because $F$ is a purely transcendental extension of $E$, replacing $p$ by $p_F$ neither changes $\mathrm{lndeg}(p)$ nor the fact that $p$ is anisotropic (\cite[Prop. 5.3]{Hoffmann2}). Moreover, the initial pair $(E,p)$ can be chosen so that $\mathrm{lndeg}(p)$ takes any prescribed value $\geq \mathrm{log}_2\big(\mathrm{dim}(p)\big)$. The example therefore shows that as far as the quasilinear case of Conjecture \ref{mainconj} is concerned, the integers $\witti{0}{q_{F(p)}}$ and $\mathrm{dim}(p)$ cannot by themselves determine any further gaps in the possible values of $\mathrm{dim}(q)$.  \end{example}

In what follows, we will mainly consider the statement formulated in Problem \ref{refinedprob}, as opposed to the statement of Conjecture \ref{mainconj}. Before proceeding to the proofs of our main results, it will be useful to record the following trivial reformulation of the former:

\begin{lemma} \label{lemreformconj} Problem \ref{refinedprob} admits a positive answer if and only if there exists a non-negative integer $a$ such that
$$ \witti{0}{q_{F(p)}} \leq a2^{\mathrm{lndeg}(p) - 1} \leq \mathrm{dim}(q) - \witti{0}{q_{F(p)}}. $$
\begin{proof} For any integer $a$, we have
\begin{eqnarray*} & -k \leq \mathrm{dim}(q) - a2^{\mathrm{ndeg}(p)} \leq k \\
\Leftrightarrow & -\mathrm{dim}(q) + 2\witti{0}{q_{F(p)}} \leq \mathrm{dim}(q) - a2^{\mathrm{ndeg}(p)} \leq \mathrm{dim}(q) - 2\witti{0}{q_{F(p)}} \\
\Leftrightarrow & 2 \witti{0}{q_{F(p)}} \leq 2(\mathrm{dim}(q) - a2^{\mathrm{ndeg}(p)-1}) \leq 2(\mathrm{dim}(q) - \witti{0}{q_{F(p)}}) \\
\Leftrightarrow & \witti{0}{q_{F(p)}} \leq a2^{\mathrm{lndeg}(p) - 1} \leq \mathrm{dim}(q) - \witti{0}{q_{F(p)}}, \end{eqnarray*} 
whence the claim. \end{proof} \end{lemma}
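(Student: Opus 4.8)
\textbf{Proof proposal for Lemma \ref{lemreformconj}.}

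The statement is purely a reformulation obtained by algebraic manipulation of the defining inequality in Problem \ref{refinedprob}, so the plan is simply to unwind the definitions and chase the chain of equivalences already suggested in the displayed computation. First I would recall that $k = \mathrm{dim}(q) - 2\witti{0}{q_{F(p)}}$, so that $-k$ equals $-\mathrm{dim}(q) + 2\witti{0}{q_{F(p)}}$ and $k$ equals $\mathrm{dim}(q) - 2\witti{0}{q_{F(p)}}$. The positive answer to Problem \ref{refinedprob} asserts the existence of a non-negative integer $a$ with $-k \leq \mathrm{dim}(q) - a2^{\mathrm{lndeg}(p)} \leq k$; substituting the two expressions for $\pm k$ gives the double inequality in the first line of the display.

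Next I would add $\mathrm{dim}(q)$ throughout and divide by $2$: the left-hand side becomes $\witti{0}{q_{F(p)}}$, the middle becomes $\mathrm{dim}(q) - a2^{\mathrm{lndeg}(p)-1}$ (using $2^{\mathrm{lndeg}(p)} = 2 \cdot 2^{\mathrm{lndeg}(p)-1}$, which makes sense since $\mathrm{lndeg}(p) \geq 1$ because $\mathrm{dim}(p) \geq 2$), and the right-hand side becomes $\mathrm{dim}(q) - \witti{0}{q_{F(p)}}$. Finally, subtracting $\mathrm{dim}(q)$ from the middle term and multiplying by $-1$ (which reverses the inequalities, swapping the roles of the two outer terms) yields $\witti{0}{q_{F(p)}} \leq a2^{\mathrm{lndeg}(p)-1} \leq \mathrm{dim}(q) - \witti{0}{q_{F(p)}}$, as claimed. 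Since each step is a reversible transformation applied uniformly to both inequalities, the equivalence holds for each fixed $a$, and hence the existential statements are equivalent.

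There is no real obstacle here; the only point requiring a word of care is the parity/divisibility bookkeeping — one must note that $2^{\mathrm{lndeg}(p)-1}$ is an integer (equivalently $\mathrm{lndeg}(p) \geq 1$), which follows from $\mathrm{dim}(p) \geq 2$ via Lemma \ref{lemnormdegree}, so that the reformulated inequality is genuinely a statement about the integer $a2^{\mathrm{lndeg}(p)-1}$ lying between two explicit integers. (One should also keep in mind the harmless typo in the displayed computation in the excerpt, where $\mathrm{ndeg}$ appears in place of $\mathrm{lndeg}$ in the exponent; the intended manipulation is exactly the one described above.)
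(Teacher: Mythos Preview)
Your proposal is correct and follows exactly the same approach as the paper's own proof: both arguments substitute $k = \mathrm{dim}(q) - 2\witti{0}{q_{F(p)}}$ into the double inequality and then perform the same reversible algebraic manipulations (add $\mathrm{dim}(q)$, halve, rearrange) to reach the reformulated condition. Your additional remarks about $\mathrm{lndeg}(p)\geq 1$ and the $\mathrm{ndeg}$/$\mathrm{lndeg}$ typo are accurate but not needed for the argument itself.
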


\begin{remarks} \label{remsrefinedconj} \begin{enumerate}[leftmargin=*] \item By Lemma \ref{lemndegtower}, the integer $\mathrm{lndeg}(p) - 1$ appearing as the exponent of the $2$-power here is equal to $\mathrm{lndeg}(p_1)$. We will use this fact in the sequel.
\item It follows from \cite[Prop. 5.3]{Hoffmann2} that the answer to Problem \ref{refinedprob} is invariant under replacing $F$ with a separably generated (e.g., purely transcendental) extension of itself.  \end{enumerate} \end{remarks}

\section{Main results} \label{secmainresults}

We are now ready to prove our main results towards the quasilinear case of Conjecture \ref{mainconj}. Our approach rests on the following key result from \cite{Scully2}:

\begin{theorem}[{\cite[Thm. 6.6]{Scully2}}]\label{highersubformthm} Let $\psi$ and $\varphi$ be anisotropic quasilinear quadratic forms of dimension $\geq 2$ over $F$ such that $\varphi_{F(\psi)}$ is isotropic. Then, after possibly replacing $F$ with a purely transcendental extension of itself $F$, there exists an anisotropic quasilinear quadratic form $\tau$ over $F$ such that
$$ \anispart{(\tau \otimes \psi_1)} \subset \anispart{(\varphi_{F(\psi)})} \hspace{1cm} \text{and} \hspace{1cm} \mathrm{dim}(\tau) \geq \witti{0}{\varphi_{F(\psi)}}.$$ \end{theorem}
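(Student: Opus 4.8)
The plan is to adapt the strategy used to prove Lemma~\ref{RefinedDlem}. After scaling $\psi$ we may assume $\psi \simeq \form{1,a_1,\dots,a_n}$. Exactly as there, introduce variables $X = (X_1,\dots,X_n)$, set $F' = F(X)$, $b = a_1X_1^2 + \dots + a_nX_n^2 \in D(\psi_{F'})$ and $L = F'(\sqrt{b})$; then $L$ is $F$-isomorphic to the affine function field $F[\psi]$, hence to a degree-$1$ purely transcendental extension of $F(\psi)$. Since isotropy indices, anisotropic parts up to isometry, and the statement to be proved are all unaffected by purely transcendental base change (Lemma~\ref{lemdsep}), it suffices to produce an anisotropic quasilinear form $\tau$ over $F'$ (or a further purely transcendental extension of it) with $\mydim{\tau} \geq j$ and $\anispart{(\tau \otimes \psi_1)} \subset \anispart{(\varphi_L)}$, where $\psi_1 \coloneqq \anispart{(\psi_L)}$ and $j \coloneqq \witti{0}{\varphi_L}$; since $L$ is purely transcendental over $F(\psi)$, this is equivalent to the conclusion of the theorem. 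The computational input is that, by the characteristic-$2$ identity $(\alpha+\beta\sqrt{b})^2 = \alpha^2 + b\beta^2$, one has $D(\eta_L) = D(\eta_{F'}) + bD(\eta_{F'})$ for every quasilinear form $\eta$ over $F'$; this is an $L^2 = (F')^2 + (F')^2 b$-subspace of $F'$ whose $L^2$-dimension equals $\mydim{\anispart{(\eta_L)}}$. Applying this with $\eta = \psi$ gives $D(\psi_1) = D(\psi_{F'}) + bD(\psi_{F'})$, which contains $1$ and $b$; with $\eta = \varphi$ it gives $\mathrm{dim}_{L^2} D(\varphi_L) = \mydim{\varphi} - j$; and since $b \in D(\psi_{F'})$ we also have $\pfister{b} \subset \psi_{F'}$.

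By Proposition~\ref{propsubformtheorem}, the desired containment $\anispart{(\tau \otimes \psi_1)} \subset \anispart{(\varphi_L)}$ is equivalent to $D(\tau) \cdot D(\psi_1) \subseteq D(\varphi_L)$. So put $T = \lbrace t \in F' \mid t D(\psi_1) \subseteq D(\varphi_L) \rbrace$. Using that $D(\psi_1)$ is closed under addition and under multiplication by $L^2$, one sees that $T$ is an $(F')^2$-subspace of $F'$, and it is finite-dimensional because $1 \in D(\psi_1)$ forces $T \subseteq D(\varphi_L)$. Letting $\tau$ be the anisotropic quasilinear form over $F'$ with $D(\tau) = T$, the inclusion $\anispart{(\tau \otimes \psi_1)} \subset \anispart{(\varphi_L)}$ holds automatically, and the entire theorem comes down to the single inequality $\mydim{\tau} = \mathrm{dim}_{(F')^2} T \geq j$.

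This is where the hypothesis on $\witti{0}{\varphi_L}$ must be exploited. The totally isotropic subspace $W$ of $\varphi_L$ is an $L$-subspace of dimension $j$; decomposing $w = u_w + v_w\sqrt{b}$ with $u_w, v_w \in V_\varphi$ for $w \in W$ yields the relation $\varphi(u_w) = b\varphi(v_w)$ in $F'$, and the map $w \mapsto \varphi(v_w)$ is additive, satisfies $\varphi(v_{\lambda w}) = \lambda^2 \varphi(v_w)$ for $\lambda \in L$, and is injective because $\varphi_{F'}$ is anisotropic; one then checks that its image is the $L^2$-subspace $U \coloneqq D(\varphi_{F'}) \cap b D(\varphi_{F'})$ of $D(\varphi_{F'})$, so that $\mathrm{dim}_{L^2} U = j$ and $\mathrm{dim}_{(F')^2} U = 2j$. (Equivalently, one can work with the form $\mu$ appearing in a decomposition $\varphi_{F'} \simeq (\pfister{b} \otimes \mu) \perp \varphi'$ in which $\pfister{b} \otimes \mu$ is a maximal $\pfister{b}$-divisible subform of $\varphi_{F'}$, for which $\mydim{\mu} = j$.) It remains to show that $T$ contains an $(F')^2$-subspace of $F'$ of dimension at least $j$; that is, one must manufacture, possibly after a further purely transcendental base change, a $j$-dimensional value set built out of $U$ (or $\mu$) which is stable under multiplication by $D(\psi_1)$.

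This last point is \emph{the main obstacle}. What $U$ and $\mu$ plainly provide is only the weaker stability $(\cdot)\, D(\pfister{b}) \subseteq D(\varphi_L)$, which records just the $\pfister{b}$-divisibility forced by isotropy over the quadratic extension $F'(\sqrt{b})$, whereas we need stability under the strictly larger set $D(\psi_1)$ (strictly larger as soon as $\mydim{\psi} > 2$). Closing this gap appears to require an induction along the Knebusch splitting tower of $\psi$: one strips off the successive quadratic extensions one stage at a time, controls the growth of the value sets $D(\psi_r)$ through the norm-degree relation $\mathrm{lndeg}(\psi_1) = \mathrm{lndeg}(\psi) - 1$ (Lemma~\ref{lemndegtower}), and at each stage combines a purely transcendental base change with the maximality of the current incarnation of $T$ to bring the relevant isotropic vectors into general position. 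Carrying out this inductive transfer is, I expect, the technical heart of the argument.
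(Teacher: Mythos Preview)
The present paper does not prove this statement: Theorem~\ref{highersubformthm} is quoted from \cite[Thm.~6.6]{Scully2} and used as a black box throughout \S\ref{secmainresults} and \S\ref{secbeyond}. There is thus no proof here to compare your attempt against.

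On the attempt itself: the affine model $L = F'(\sqrt{b})$, the identity $D(\eta_L) = D(\eta_{F'}) + bD(\eta_{F'})$, the calculation $\dim_{(F')^2}\bigl(D(\varphi_{F'}) \cap bD(\varphi_{F'})\bigr) = 2j$, and the translation of the conclusion into $D(\tau)\cdot D(\psi_1) \subseteq D(\varphi_L)$ are all correct. But you stop precisely where the content begins: the inequality $\dim_{(F')^2} T \geq j$ is never established, and your final paragraph is a list of hoped-for ingredients rather than an argument. In particular, an induction along the Knebusch tower of $\psi$ would yield at each stage a hypothesis about $(\psi_r)_1 = \psi_{r+1}$, not about $\psi_r$, and you offer no mechanism for transferring $D(\psi_{r+1})$-stability back to $D(\psi_r)$-stability; Lemma~\ref{lemndegtower} merely records a norm-degree drop and does not provide such a transfer, and the phrase about ``bringing the relevant isotropic vectors into general position'' is left entirely unspecified. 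There is also a subtler issue with your target: the applications in this paper (e.g.\ Proposition~\ref{PNprop}, Lemma~\ref{leminductionlemma}) treat $\tau$ as an \emph{anisotropic} form over $F(\psi)$, and since your $T$ is automatically closed under multiplication by $b$ (because $bD(\psi_1)=D(\psi_1)$), the anisotropic form over $L$ with value set $T$ has $L^2$-dimension $\tfrac{1}{2}\dim_{(F')^2} T$ --- so the bound actually needed for those applications is $\dim_{(F')^2} T \geq 2j$, not $\geq j$. In short, you have given a clean reformulation of what must be shown, but not a proof.
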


As shown in \cite[\S 6]{Scully2}, Theorem \ref{highersubformthm} has many applications in the theory of quasilinear quadratic forms. The present article represents another demonstration of its range. First, we observe that it immediately implies the following:

\begin{proposition}\label{PNprop} Problem \ref{refinedprob} has a positive answer if $p$ is a quasi-Pfister neighbour. In particular, the quasilinear case of Conjecture \ref{mainconj} holds when $p$ is a quasi-Pfister neighbour.
\begin{proof} Let $s$ be the unique non-negative integer such that $2^s < \mathrm{dim}(p) \leq 2^{s+1}$. Since $p$ is a quasi-Pfister neighbour, we have $\mathrm{lndeg}(\varphi) = s+1$. If $q_{F(p)}$ is anisotropic then conjecture holds trivially, so let us suppose otherwise. By Theorem \ref{highersubformthm} and Remark \ref{remsrefinedconj} (2), we may then assume that there exists an anisotropic form $\tau$ over $F(p)$ such that $\mathrm{dim}(\tau) = \witti{0}{q_{F(p)}}$ and $\anispart{(\tau \otimes p_1)} \subset \anispart{(q_{F(p)})}$. On the other hand, $\tau$ is similar to a subform of $\anispart{(\tau \otimes p_1)}$ by Lemma \ref{lemtensorinclusion} (2). We therefore have inequalities
$$ \mathrm{dim}(\tau) \leq \mathrm{dim}\big(\anispart{(\tau \otimes p_1)}\big) \leq \mathrm{dim}\big(\anispart{(q_{F(p)})}\big),$$
or, in other words,
$$ \witti{0}{q_{F(p)}} \leq \mathrm{dim}\big(\anispart{(\tau \otimes p_1)}\big) \leq \mathrm{dim}(q) - \witti{0}{q_{F(p)}}. $$
Now, since $p$ is a quasi-Pfister neighbour, $p_1$ is similar to an $s$-fold Pfister form (Lemma \ref{lemPNcriterion} above). By \cite[Prop. 4.19]{Hoffmann2}, it follows that $\anispart{(\tau \otimes p_1)}$ is divisible by $p_1$. In particular, $\mathrm{dim}\big(\anispart{(\tau \otimes p_1)}\big)$ is divisible by $2^s$. Thus, by the preceding discussion, there exists a positive integer $a$ such that
$$ \witti{0}{q_{F(p)}} \leq a2^s \leq \mathrm{dim}(q) - \witti{0}{q_{F(p)}}.$$
By Lemma \ref{lemreformconj} and the fact that $\mathrm{lndeg}(p) = s+1$, this is exactly what we wanted. \end{proof} \end{proposition}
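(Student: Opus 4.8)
The plan is to reformulate the assertion via Lemma \ref{lemreformconj}: a positive answer to Problem \ref{refinedprob} is the same as exhibiting a non-negative integer $a$ with $\witti{0}{q_{F(p)}} \leq a 2^{\mathrm{lndeg}(p)-1} \leq \mathrm{dim}(q) - \witti{0}{q_{F(p)}}$. Since $p$ is a quasi-Pfister neighbour, Lemma \ref{lemnormdegree} gives $\mathrm{lndeg}(p) = s+1$, so $2^{\mathrm{lndeg}(p)-1} = 2^s$; and since $p$ is anisotropic of dimension $\geq 2$ it is not split, so $p_1$ is defined and, by Lemma \ref{lemPNcriterion}, is similar to an $s$-fold quasi-Pfister form. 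If $q_{F(p)}$ is anisotropic the statement is trivial ($a = 0$ works), so I assume $q_{F(p)}$ is isotropic, i.e. $\witti{0}{q_{F(p)}} \geq 1$.

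The main step is to apply Theorem \ref{highersubformthm} to the pair $(p,q)$. After replacing $F$ by a purely transcendental extension of itself --- which by Remark \ref{remsrefinedconj}(2) affects neither the hypotheses nor the conclusion --- this produces an anisotropic form $\tau$ over $F(p)$ with $\anispart{(\tau \otimes p_1)} \subset \anispart{(q_{F(p)})}$ and $\mathrm{dim}(\tau) \geq \witti{0}{q_{F(p)}}$; passing to a subform of $\tau$ (which only shrinks $\anispart{(\tau \otimes p_1)}$), I may assume $\mathrm{dim}(\tau) = \witti{0}{q_{F(p)}}$. Now Lemma \ref{lemtensorinclusion}(2) shows $\tau$ is similar to a subform of $\anispart{(\tau \otimes p_1)}$, while the fact that $p_1$ is similar to an $s$-fold quasi-Pfister form forces, by \cite[Prop. 4.19]{Hoffmann2}, the form $\anispart{(\tau \otimes p_1)}$ to be divisible by an $s$-fold quasi-Pfister form, whence $\mathrm{dim}(\anispart{(\tau \otimes p_1)}) = a 2^s$ for some integer $a \geq 1$. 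Chaining $\tau \hookrightarrow \anispart{(\tau \otimes p_1)} \hookrightarrow \anispart{(q_{F(p)})}$ (up to similarity) and using $\mathrm{dim}(\anispart{(q_{F(p)})}) = \mathrm{dim}(q) - \witti{0}{q_{F(p)}}$ gives $\witti{0}{q_{F(p)}} \leq a 2^s \leq \mathrm{dim}(q) - \witti{0}{q_{F(p)}}$, which by Lemma \ref{lemreformconj} is precisely what Problem \ref{refinedprob} demands. The ``in particular'' clause follows at once, since a positive answer to Problem \ref{refinedprob} for $(p,q)$ implies Conjecture \ref{mainconj} for $(p,q)$.

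I expect no real obstacle here: the entire depth of the argument is concentrated in Theorem \ref{highersubformthm} (imported from \cite{Scully2}), and what remains is a short dimension count. The only place to be careful is the interaction of the two ``similar to a subform'' facts about $\anispart{(\tau \otimes p_1)}$ --- that it is divisible by an $s$-fold quasi-Pfister form and that it contains a copy of $\tau$ --- so that the divisibility of $\mathrm{dim}(\anispart{(\tau \otimes p_1)})$ by $2^s$ lines up exactly with the power $2^{\mathrm{lndeg}(p)-1}$ appearing in Lemma \ref{lemreformconj}.
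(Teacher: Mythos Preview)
Your proof is correct and follows essentially the same line as the paper's: reformulate via Lemma \ref{lemreformconj}, invoke Theorem \ref{highersubformthm} to produce $\tau$, use Lemma \ref{lemtensorinclusion} and the quasi-Pfister shape of $p_1$ together with \cite[Prop.~4.19]{Hoffmann2} to see that $\mathrm{dim}\big(\anispart{(\tau\otimes p_1)}\big)$ is a multiple of $2^s$ lying in the required interval. Your extra remark that passing to a subform of $\tau$ only shrinks $\anispart{(\tau\otimes p_1)}$ (hence preserves the containment in $\anispart{(q_{F(p)})}$) is a correct justification of the reduction to $\mathrm{dim}(\tau)=\witti{0}{q_{F(p)}}$, though note that the argument would go through equally well with the original $\tau$ of dimension $\geq \witti{0}{q_{F(p)}}$.
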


Now, while the general case seems to be more complicated, Theorem \ref{highersubformthm} at least permits us to set up an inductive approach to Problem \ref{refinedprob}. The basic step is the following lemma:

\begin{lemma} \label{leminductionlemma} Let $\psi$ be an anisotropic quasilinear quadratic form of dimension $\geq 2$ over a field $L$ of characteristic $2$, let $0 < m < 2^{\mathrm{lndeg}(\psi) - 2}$, and let $m'$ be the largest integer strictly less than $m$ which is divisible by $2^{\mathfrak{d}_1(\psi)}$. If Problem \ref{refinedprob} has a positive answer when $F = L(\psi)$, $p=\psi_1$ and $k \leq m'$, then it also has a positive answer when $F=L$, $p =\psi$ and $k \leq m$.

\begin{remark} We remind the reader that $\mathfrak{d}_1(\psi)$ denotes the largest integer $r$ such that $\psi_1$ is divisible by an anisotropic $r$-fold quasi-Pfister form (see \S \ref{secdivindices} above).
\end{remark}

\begin{proof} To simplify the notation, let $L_1 = L(\psi)$. We make the stated assumption regarding Problem \ref{refinedprob} in the case where $F = L_1$ and $p =\psi_1$. Let $q$ be an anisotropic quasilinear quadratic form of dimension $\geq 2$ over $L$ such that $k = d(q_{L_1}) \leq m$. Our aim is to show that $\mathrm{dim}(q) =a2^{\mathrm{lndeg}(p)} + \epsilon$ for some integers $a \geq 0$ and $-k\leq  \epsilon \leq k$. In view of Proposition \ref{propquasihyp}, we can assume that $k>0$. If $q_{L_1}$ is anisotropic, then the statement holds trivially. If not, then Theorem \ref{highersubformthm} and Remark \ref{remsrefinedconj} (2) allow us to assume that there exists an anisotropic form $\tau$ over $L_1$ such that $\mathrm{dim}(\tau) = \witti{0}{q_{L_1}}$ and $\anispart{(\tau \otimes \psi_1)} \subset \anispart{(q_{L_1})}$. Now, by Lemma \ref{lemreformconj} and Remark \ref{remsrefinedconj} (1), proving our assertion amounts to showing that
$$ \witti{0}{q_{L_1}} \leq a2^{\mathrm{lndeg}(\psi_1)} \leq \mathrm{dim}(q) - \witti{0}{q_{L_1}}$$
for some non-negative integer $a$, or, equivalently, that
\begin{equation} \label{equation1} \mathrm{dim}(\tau) \leq a2^{\mathrm{lndeg}(\psi_1)} \leq \mathrm{dim}(\anispart{(q_{L_1})}) \end{equation} for some non-negative integer $a$. We will work with the latter formulation. Let $\varphi = \anispart{(\tau \otimes \psi_1)}$ and let $l = \mathrm{dim}(\varphi) - \mathrm{dim}(\tau)$. By Lemma \ref{RefinedDlem}, we have that
\begin{enumerate} 
\item $d(\tau_{L_1(\psi_1)}) \leq i$, and
\item $d(\varphi_{L_1(\psi_1)}) \leq l - i$
\end{enumerate}
for some integer $0 \leq i \leq l$ (recall here that $l \geq 0$ because $\tau$ is similar to a subform of $\varphi$). We now have three cases to consider:\\

\noindent {\bf Case 1.} If $i=0$, then Proposition \ref{propquasihyp} tells us that $\mathrm{dim}(\tau)$ is divisible by $2^{\mathrm{lndeg}(\psi_1)}$. Thus, \eqref{equation1} certainly holds for some positive integer $a$ in this case. \\

\noindent {\bf Case 2.} If $i = l$, then Proposition \ref{propquasihyp} tells us that $\mathrm{dim}(\varphi)$ is divisible by $2^{\mathrm{lndeg}(\psi_1)}$. Since
$$ \mathrm{dim}(\tau) \leq \mathrm{dim}(\varphi) \leq \mathrm{dim}(\anispart{(q_{L_1})}), $$
\eqref{equation1} also holds for some positive integer $a$ in this case. \\

\noindent {\bf Case 3.} Suppose now that $0<i<l$. We claim that $d(\tau_{L_1(\psi_1)})$ and $d(\varphi_{L_1(\psi_1)})$ are both $\leq m'$ (recall that $m'$ is the largest integer $<m$ which is divisible by $2^{\mathfrak{d}_1(\psi)}$). First, we have
\begin{eqnarray*} l &=& \mathrm{dim}(\varphi) - \mathrm{dim}(\tau) \\
&\leq& \mathrm{dim}(\anispart{(q_{L_1})}) - \witti{0}{q_{L_1}} \\
&= & \mathrm{dim}(q) - 2\witti{0}{q_{L_1}} \\
&=& k\end{eqnarray*}
In particular, since $i>0$, we have that $d(\varphi_{L_1(\psi_1)})< l \leq k \leq m$. On the other hand $d(\varphi_{L_1(\psi_1)})$ is divisible by $2^{\mathfrak{d}_0(\varphi)}$ by Lemma \ref{lemdivisibilityofd}. Since $\mathfrak{d}_0(\varphi) \geq \mathfrak{d}_0(\psi_1) = \mathfrak{d}_1(\psi)$ (Lemma \ref{lemdivindexineq}), we conclude that $d(\varphi_{L_1(\psi_1)})$ is divisible by $2^{\mathfrak{d}_1(\psi)}$, and is therefore $\leq m'$. This argument also proves the claim about $d(\tau_{L_1(\psi_1)}) = i$. Indeed, if $i>m'$, then $l-i < 2^{\mathfrak{d}_1(\psi)}$ by the definition of $m'$ and the fact that $l \leq m$. Since $d(\varphi_{L_1(\psi_1)}) = l-i$ is divisible by $2^{\mathfrak{d}_1(\psi)}$, this implies that $l-i =0$, contradicting our assumption. Now, since $d(\tau_{L_1(\psi_1)})$ and $d(\varphi_{L_1(\psi_1)})$ are both $\leq m'$, we can invoke our initial hypothesis to get that
$$ \mathrm{dim}(\tau) = b2^{\mathrm{lndeg}(\psi_1)} + \epsilon_1 \hspace{1cm} \text{and} \hspace{1cm} \mathrm{dim}(\varphi) = c2^{\mathrm{lndeg}(\psi_1)} + \epsilon_2 $$
for some non-negative integers $b,c$ and some integers $-i \leq \epsilon_1 \leq i$ and $i-l \leq \epsilon_2 \leq l-i$. If $\epsilon_1 \leq 0$, then
\begin{eqnarray*} \mathrm{dim}(\tau) &\leq & b2^{\mathrm{lndeg}(\psi_1)} \\
                                     & \leq & \mathrm{dim}(\tau) + i \\
                                     & \leq & \mathrm{dim}(\tau) + l \\
                                     & = & \mathrm{dim}(\varphi)\\
                                     & \leq & \mathrm{dim}(\anispart{(q_{L_1})}), \end{eqnarray*}
and so \eqref{equation1} holds with $a = b$. Similarly, if $\epsilon_2 \geq 0$, then one immediately checks that \eqref{equation1} holds with $a = c$. Finally, suppose that $\epsilon_1 > 0$ and $\epsilon_2 < 0$. Since $\mathrm{dim}(\varphi)\geq \mathrm{dim}(\tau)$, we then have that $c>b$. In particular,
\begin{eqnarray*} 2^{\mathrm{lndeg}(\psi_1)} & \leq & (c-b)2^{\mathrm{lndeg}(\psi_1)}\\
               &=& \big(\mathrm{dim}(\varphi) - \mathrm{dim}(\tau)\big) + (\epsilon_1 - \epsilon_2)\\
               &\leq & l + \big(i + (l-i)\big) \\
               & =& 2l. \end{eqnarray*}
This is impossible, however. Indeed, we noted above that $l\leq m$, and since $m < 2^{\mathrm{lndeg}(\psi)-2}$, we have (using Lemma \ref{lemndegtower}) that
$$ 2l < 2^{\mathrm{lndeg}(\psi) -1} = 2^{\mathrm{lndeg}(\psi_1)}. $$
We conclude that this case cannot occur, and so the lemma is proved.  \end{proof}  \end{lemma}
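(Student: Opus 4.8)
The plan is to run an induction that exchanges one step down the Knebusch tower, $\psi \rightsquigarrow \psi_1$ (which lowers $\mathrm{dim}(p)$), for the controlled drop $m \rightsquigarrow m'$ in the bound on $k$. Write $L_1 = L(\psi)$ and take an anisotropic quasilinear $q$ over $L$ with $k := d(q_{L_1}) \le m$. After using Proposition \ref{propquasihyp} to dispose of $k = 0$, and the trivial case where $q_{L_1}$ remains anisotropic, I would apply Theorem \ref{highersubformthm} — legitimately, after a purely transcendental base change, by Remark \ref{remsrefinedconj} (2) — to get an anisotropic $\tau$ over $L_1$ with $\mathrm{dim}(\tau) = \witti{0}{q_{L_1}}$ and $\anispart{(\tau \otimes \psi_1)} \subset \anispart{(q_{L_1})}$. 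By Lemma \ref{lemreformconj} together with Remark \ref{remsrefinedconj} (1), the goal becomes to produce a non-negative integer $a$ with $\mathrm{dim}(\tau) \le a\, 2^{\mathrm{lndeg}(\psi_1)} \le \mathrm{dim}(\anispart{(q_{L_1})})$; and since $\mathrm{dim}(\tau) \le \mathrm{dim}(\varphi) \le \mathrm{dim}(\anispart{(q_{L_1})})$ for $\varphi := \anispart{(\tau \otimes \psi_1)}$, it suffices to land a multiple of $2^{\mathrm{lndeg}(\psi_1)}$ in the interval $[\mathrm{dim}(\tau), \mathrm{dim}(\varphi)]$.

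To hit that interval I would feed $\tau$ and $\psi_1$ into Lemma \ref{RefinedDlem}: with $l := \mathrm{dim}(\varphi) - \mathrm{dim}(\tau) \ge 0$, this supplies an integer $0 \le i \le l$ with $d(\tau_{L_1(\psi_1)}) \le i$ and $d(\varphi_{L_1(\psi_1)}) \le l - i$. The endpoint cases are painless: if $i = 0$ (resp.\ $i = l$), Proposition \ref{propquasihyp} makes $\mathrm{dim}(\tau)$ (resp.\ $\mathrm{dim}(\varphi)$) divisible by $2^{\mathrm{lndeg}(\psi_1)}$, which already puts a multiple in the interval. For the middle range $0 < i < l$ the natural move is to invoke the induction hypothesis — a positive answer to Problem \ref{refinedprob} for $p = \psi_1$ over $L_1$ whenever $k \le m'$ — for \emph{both} $\tau$ and $\varphi$, which is legitimate once one checks $d(\tau_{L_1(\psi_1)}) \le m'$ and $d(\varphi_{L_1(\psi_1)}) \le m'$. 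Here the key inputs are $l \le \mathrm{dim}(\anispart{(q_{L_1})}) - \witti{0}{q_{L_1}} = d(q_{L_1}) = k \le m$, the divisibility of $d(\varphi_{L_1(\psi_1)})$ by $2^{\mathfrak{d}_0(\varphi)} \ge 2^{\mathfrak{d}_1(\psi)}$ (via Lemmas \ref{lemdivisibilityofd} and \ref{lemdivindexineq}), and the definition of $m'$ as the largest integer $< m$ divisible by $2^{\mathfrak{d}_1(\psi)}$; chasing these through — with a small contradiction argument on the $\tau$-side that exploits $i < l$ — yields the two bounds.

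Granting this, the induction hypothesis gives $\mathrm{dim}(\tau) = b\, 2^{\mathrm{lndeg}(\psi_1)} + \epsilon_1$ and $\mathrm{dim}(\varphi) = c\, 2^{\mathrm{lndeg}(\psi_1)} + \epsilon_2$ with $b, c \ge 0$, $|\epsilon_1| \le i$ and $|\epsilon_2| \le l - i$, and I would close with a sign analysis: $\epsilon_1 \le 0$ lets $a = b$ work, $\epsilon_2 \ge 0$ lets $a = c$ work, and the remaining configuration $\epsilon_1 > 0$, $\epsilon_2 < 0$ forces $c > b$, hence $2^{\mathrm{lndeg}(\psi_1)} \le (c - b)\,2^{\mathrm{lndeg}(\psi_1)} = (\mathrm{dim}(\varphi) - \mathrm{dim}(\tau)) + (\epsilon_1 - \epsilon_2) \le l + i + (l - i) = 2l$, which is absurd since $2l \le 2m < 2^{\mathrm{lndeg}(\psi) - 1} = 2^{\mathrm{lndeg}(\psi_1)}$ by Lemma \ref{lemndegtower}. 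The main obstacle — and the precise reason the hypothesis $m < 2^{\mathrm{lndeg}(\psi) - 2}$ is imposed — is exactly this last step: the target interval $[\mathrm{dim}(\tau), \mathrm{dim}(\varphi)]$, of length $l \le m$, must be kept short enough that the $\epsilon$-windows $(-i, i)$ and $(i - l, l - i)$ handed back by the induction hypothesis cannot straddle the interval from opposite sides without overlapping; verifying that the bookkeeping in the three sub-cases genuinely covers the interval is the delicate part of the argument.
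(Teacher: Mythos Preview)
Your proposal is correct and follows essentially the same route as the paper's proof: the same reduction via Theorem \ref{highersubformthm} and Lemma \ref{lemreformconj}, the same application of Lemma \ref{RefinedDlem} to produce the splitting integer $i$, the identical three-case analysis (with Proposition \ref{propquasihyp} handling $i=0$ and $i=l$, and the induction hypothesis plus the sign analysis on $\epsilon_1,\epsilon_2$ handling $0<i<l$), and the same use of $m<2^{\mathrm{lndeg}(\psi)-2}$ with Lemma \ref{lemndegtower} to rule out the bad configuration. Your bookkeeping in Case 3, including the contradiction argument for the $\tau$-side bound via the divisibility of $d(\varphi_{L_1(\psi_1)})$, matches the paper's as well.
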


Recall from \cite[\S 2.K]{Scully1} that the \emph{quasi-Pfister height} of a quasilinear quadratic form $\varphi$ is defined as the smallest non-negative integer $h_{\mathrm{qp}}(\varphi)$ such that $\varphi_{h_{\mathrm{qp}}(\varphi)}$ is similar to a quasi-Pfister form. The previous lemma now implies:

\begin{proposition} \label{propqphresult} Problem \ref{refinedprob} has a positive answer in the case where $k< \mathrm{dim}(p_{h_{\mathrm{qp}}(p)})$.

\begin{proof} Let $r = h_{\mathrm{qp}}(p)$. We argue by induction on $r$. If $r\leq 1$, then $p$ is a quasi-Pfister neighbour (Lemma \ref{lemPNcriterion}) and the statement holds by Proposition \ref{PNprop}. Assume now that $r\geq 2$. Then $\mathrm{lndeg}(p) \geq \mathrm{lndeg}(p_r) + 2$ by Lemma \ref{lemndegtower}. In particular, since $p_r$ is a quasi-Pfister form, we have
$$ k < \mathrm{dim}(p_r) = 2^{\mathrm{lndeg}(p_r)} \leq 2^{\mathrm{lndeg}(p) -2}. $$
In particular, by applying Lemma \ref{leminductionlemma} in the case where $L = F$ and $\psi = p$, we can achieve a reduction in $r$ without changing the assumption on $k$ (the exchange $p \rightarrow p_1$ does not change $\mathrm{dim}(p_{h_{\mathrm{qp}}(p)})$). The result follows.  \end{proof} \end{proposition}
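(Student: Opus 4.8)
The plan is to argue by induction on $r = h_{\mathrm{qp}}(p)$, with Lemma~\ref{leminductionlemma} supplying the inductive step. The pivotal observation is that the reduction $(F,p) \mapsto (F(p), p_1)$ built into that lemma respects both the quasi-Pfister height and the quantity $\mathrm{dim}(p_{h_{\mathrm{qp}}(p)})$: since $(p_1)_j \simeq p_{1+j}$ all along the Knebusch splitting tower, one has $h_{\mathrm{qp}}(p_1) = r - 1$ (whenever $r \geq 1$) and $\mathrm{dim}\big((p_1)_{h_{\mathrm{qp}}(p_1)}\big) = \mathrm{dim}(p_r) = \mathrm{dim}(p_{h_{\mathrm{qp}}(p)})$, so the hypothesis $k < \mathrm{dim}(p_{h_{\mathrm{qp}}(p)})$ is preserved under the reduction. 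The base case is $r \leq 1$: by Lemma~\ref{lemPNcriterion} this is precisely the case where $p$ is a quasi-Pfister neighbour, and then Proposition~\ref{PNprop} gives a positive answer to Problem~\ref{refinedprob} for every value of $k$.

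For the inductive step I would take $r \geq 2$ and assume the proposition known for all anisotropic quasilinear forms of quasi-Pfister height $r-1$ over arbitrary fields of characteristic $2$. First, if $\mathrm{dim}(p_r) \leq 1$ then the hypothesis $k < \mathrm{dim}(p_r)$ forces $k = 0$ and the statement follows from Proposition~\ref{propquasihyp}; so assume $\mathrm{dim}(p_r) \geq 2$ and set $m = \mathrm{dim}(p_r) - 1 \geq 1$. By Lemma~\ref{lemndegtower} the norm degree drops by one at each stage of the splitting tower, so $\mathrm{lndeg}(p_r) = \mathrm{lndeg}(p) - r \leq \mathrm{lndeg}(p) - 2$, giving $\mathrm{dim}(p_r) = 2^{\mathrm{lndeg}(p_r)} \leq 2^{\mathrm{lndeg}(p)-2}$ and hence $0 < m < 2^{\mathrm{lndeg}(p)-2}$, which is exactly the numerical hypothesis of Lemma~\ref{leminductionlemma} with $L = F$ and $\psi = p$. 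Letting $m'$ be the largest integer strictly below $m$ divisible by $2^{\mathfrak{d}_1(p)}$, we have $m' < m < \mathrm{dim}(p_r) = \mathrm{dim}\big((p_1)_{h_{\mathrm{qp}}(p_1)}\big)$, so the inductive hypothesis applied to $p_1$ over $F(p)$ yields a positive answer to Problem~\ref{refinedprob} whenever $k \leq m'$. Lemma~\ref{leminductionlemma} then promotes this to a positive answer whenever $k \leq m = \mathrm{dim}(p_r) - 1$, i.e., for all $k < \mathrm{dim}(p_{h_{\mathrm{qp}}(p)})$, which closes the induction.

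I do not expect any genuine obstacle here: the real work --- trading a decrease in $\mathrm{dim}(p)$ for a controlled decrease in the admissible value of $k$ --- is already encapsulated in Lemma~\ref{leminductionlemma}, and Proposition~\ref{propqphresult} amounts to iterating it. The points that need attention are all routine bookkeeping: checking that $p_1$ is again anisotropic of dimension $\geq 2$ when $r \geq 2$ (a split or $1$-dimensional $p_1$ would force $h_{\mathrm{qp}}(p) \leq 1$), confirming the identities $h_{\mathrm{qp}}(p_1) = r-1$ and $\mathrm{dim}\big((p_1)_{h_{\mathrm{qp}}(p_1)}\big) = \mathrm{dim}(p_{h_{\mathrm{qp}}(p)})$, and lining up the numerical inequality $m < 2^{\mathrm{lndeg}(p)-2}$ with the standing assumption $r \geq 2$ via Lemma~\ref{lemndegtower}.
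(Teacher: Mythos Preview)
Your proposal is correct and follows essentially the same approach as the paper: induction on $r = h_{\mathrm{qp}}(p)$ with Lemma~\ref{leminductionlemma} supplying the inductive step, using Lemma~\ref{lemndegtower} to verify the numerical hypothesis $m < 2^{\mathrm{lndeg}(p)-2}$ when $r \geq 2$. You are in fact slightly more careful than the paper in isolating the degenerate case $\mathrm{dim}(p_r) \leq 1$ (where $k = 0$ and Proposition~\ref{propquasihyp} applies directly) and in spelling out explicitly that $h_{\mathrm{qp}}(p_1) = r-1$ and $\mathrm{dim}\big((p_1)_{h_{\mathrm{qp}}(p_1)}\big) = \mathrm{dim}(p_r)$, so that the inductive hypothesis covers all $k \leq m'$.
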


The idea now is to examine Problem \ref{refinedprob} by repeatedly applying Lemma \ref{leminductionlemma} over the Knebusch splitting tower of $p$. In order to get something concrete out of this, however, we require some non-trivial information concerning the possible evolution of $p$ as we pass over this tower. Fortunately, such information is available in the form of Theorems \ref{thmmaxsplitting} and \ref{thmdivisibilityindices}, and this leads us to the following observation:

\begin{proposition} \label{propkeyprop} Let $s$ be a positive integer and let $0 \leq m <2^s$. Suppose that Problem \ref{refinedprob} has a positive answer whenever $k<m$. Then it also has a positive answer in the situation where $2^s < \mathrm{dim}(p) \leq 2^{s+1}$ and $k \leq \mathrm{min}(m+\mathrm{dim}(p) -2^s - 2^{s-2}, 2^s -1)$. 
\begin{proof} Suppose that $2^s < \mathrm{dim}(p) \leq 2^{s+1}$ and let $k \leq \mathrm{min}(m+\mathrm{dim}(p) -2^s - 2^{s-2}, 2^s -1)$. By the discussion in \S \ref{secknebusch}, there exists an integer $0 < r < h(p)$ such that $\mathrm{dim}(p_r) = 2^s$. If $p_r$ is a quasi-Pfister form, then $\mathrm{dim}(p_{h_{\mathrm{qp}}(p)}) \geq 2^s$. Since $k< 2^s$, the statement follows from Proposition \ref{propqphresult} in this case. Suppose now that $p_r$ is not a quasi-Pfister form. Then $\mathrm{lndeg}(p_r) \geq s+1$. In particular, if $0 \leq i < r$, then we have
$$ 2^{\mathrm{lndeg}(p_i) -2} \geq 2^{s+2 - 2} = 2^s > k $$
(see Lemma \ref{lemndegtower}). Thus, repeatedly applying Lemma \ref{leminductionlemma} over the Knebusch splitting tower $(F_i)$ of $p$ (specifically, consider the sequence $(F,p) \rightarrow (F_1,p_1) \rightarrow (F_2,p_2) \rightarrow \cdots \rightarrow (F_{r-1},p_{r-1})$), there exists a descending sequence of integers $m_1 > m_2 > \cdots > m_r$ such that
\begin{enumerate} \item $m_1 < k$.
\item For each $1 \leq i \leq r$, $m_i$ is divisible by $2^{\mathfrak{d}_i(p)}$.
\item If Problem \ref{refinedprob} has a positive answer for the triple $(F_r,p_r,m_r)$, then it also has a positive answer for the triple $(F,p,k)$. \end{enumerate}
In view of our initial hypothesis, it now only remains to show that $m_r < m$. Note first that $\mathfrak{d}_{j+1}(p) \geq \mathfrak{d}_{j}(p)$ for each $1 \leq j < r$ (see Theorem \ref{thmdivisibilityindices}). In particular, (2) implies that, for each such $j$, $m_j - m_{j-1}$ is a positive integer divisible by $2^{\mathfrak{d}_j(p)}$. Together with (1), we therefore have
$$ m_r = m_1 - \sum_{j=1}^{r-1} (m_j - m_{j+1})< k - \sum_{j=1}^{r-1} 2^{\mathfrak{d}_j(p)}.$$
We now invoke Theorem \ref{thmdivisibilityindices} (1), which tells us that $2^{\mathfrak{d}_j(p)} \geq \witti{j}{p}$ for all $j<r$. In particular, we have
\begin{eqnarray*} m_r &<& k - \sum_{j=1}^{r-1}\witti{j}{p} \\
 &=& k - \big(\mathrm{dim}(p) - \mathrm{dim}(p_{r-1})\big) \\
 & \leq & m + \mathrm{dim}(p) - 2^s - 2^{s-2} - \mathrm{dim}(p) + \mathrm{dim}(p_{r-1}) \\
 & = & m + \mathrm{dim}(p_{r-1}) - 2^{s} - 2^{s-2} \end{eqnarray*}
Thus, to complete the proof, it suffices to show that $\mathrm{dim}(p_{r-1}) \leq 2^s + 2^{s-2}$. To see that this is indeed true, note first that $p_{r-1}$ has maximal splitting (see \S \ref{secknebusch} above) by the very definition of $r$. On the other hand, since $p_r$ is not similar to a quasi-Pfister form, $p_{r-1}$ is \emph{not} a quasi-Pfister neighbour by Lemma \ref{lemPNcriterion}. The needed conclusion therefore follows from Theorem \ref{thmmaxsplitting}. \end{proof}\end{proposition}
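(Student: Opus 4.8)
The plan is to run the inductive mechanism of Lemma~\ref{leminductionlemma} down the Knebusch splitting tower of $p$, stopping it at the first form in the tower of dimension exactly $2^s$, and then to invoke the structural results of Theorems~\ref{thmmaxsplitting} and~\ref{thmdivisibilityindices} to guarantee that not too much of the room available in the bound on $k$ has been spent by the time we reach that form.

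First I would fix $p$ with $2^s < \mathrm{dim}(p) \le 2^{s+1}$ and $k \le \min(m + \mathrm{dim}(p) - 2^s - 2^{s-2},\, 2^s - 1)$, assume $k \ge 1$ (the case $k=0$ being Proposition~\ref{propquasihyp}), and locate an index $0 < r < h(p)$ with $\mathrm{dim}(p_r) = 2^s$. Such an $r$ exists because while $\mathrm{dim}(p_i) > 2^s$ we have $2^s < \mathrm{dim}(p_i) \le 2^{s+1}$, so the maximal-splitting bound forces $\mathrm{dim}(p_{i+1}) \ge 2^s$; the strictly decreasing dimensions therefore cannot jump past $2^s$, and since the tower eventually splits they must land on $2^s$. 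If $p_r$ is similar to a quasi-Pfister form, then $h_{\mathrm{qp}}(p) \le r$, so $\mathrm{dim}(p_{h_{\mathrm{qp}}(p)}) \ge 2^s > k$ and Proposition~\ref{propqphresult} closes the case. Otherwise $\mathrm{lndeg}(p_r) \ge s+1$ — a form of dimension $2^s$ whose norm degree is $\le 2^s$ would be similar to its norm form $\normform{p_r}$, a quasi-Pfister form — and then Lemma~\ref{lemndegtower} gives $\mathrm{lndeg}(p_i) = \mathrm{lndeg}(p_r) + (r-i) \ge s+2$ for every $i < r$, so in particular $2^{\mathrm{lndeg}(p_i)-2} > 2^s-1 \ge k$.

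Next I would apply Lemma~\ref{leminductionlemma} successively to the pairs $(F_0,p_0), (F_1,p_1), \ldots, (F_{r-1},p_{r-1})$; the inequality $\mathrm{lndeg}(p_i) \ge s+2$ keeps its norm-degree hypothesis satisfied at every stage, and if the running bound ever drops to $0$ we are done by Proposition~\ref{propquasihyp}. The upshot is that Problem~\ref{refinedprob} for $(F,p,k)$ reduces to the same problem for $(F_r,p_r,m_r)$, where $k > m_1 > m_2 > \cdots > m_r$ is strictly decreasing and each $m_i$ is divisible by $2^{\mathfrak{d}_i(p)}$ (the $i$-th higher divisibility index appears because $(p_{i-1})_1 = p_i$). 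Since the $\mathfrak{d}_j(p)$ are non-decreasing (Theorem~\ref{thmdivisibilityindices}), each $m_j - m_{j+1}$ is a positive multiple of $2^{\mathfrak{d}_j(p)}$, so telescoping gives $m_r < k - \sum_{j=1}^{r-1} 2^{\mathfrak{d}_j(p)}$; and $2^{\mathfrak{d}_j(p)} \ge \witti{j}{p}$ by Theorem~\ref{thmdivisibilityindices}(1), hence $\sum_{j=1}^{r-1} 2^{\mathfrak{d}_j(p)} \ge \sum_{j=1}^{r-1}\witti{j}{p} = \mathrm{dim}(p) - \mathrm{dim}(p_{r-1})$. Combining this with $k \le m + \mathrm{dim}(p) - 2^s - 2^{s-2}$ yields $m_r < m + \mathrm{dim}(p_{r-1}) - 2^s - 2^{s-2}$, so the whole argument comes down to the inequality $\mathrm{dim}(p_{r-1}) \le 2^s + 2^{s-2}$: once that is known, $m_r < m$, and the hypothesis of the proposition applies to the triple $(F_r,p_r,m_r)$.

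Finally I would prove $\mathrm{dim}(p_{r-1}) \le 2^s + 2^{s-2}$. The first higher isotropy index of $p_{r-1}$ equals $\mathrm{dim}(p_{r-1}) - \mathrm{dim}(p_r) = \mathrm{dim}(p_{r-1}) - 2^s$, which, because $2^s < \mathrm{dim}(p_{r-1}) \le 2^{s+1}$, is the maximal value permitted; thus $p_{r-1}$ has maximal splitting. On the other hand $p_{r-1}$ is not a quasi-Pfister neighbour, since $(p_{r-1})_1 = p_r$ is not similar to a quasi-Pfister form (Lemma~\ref{lemPNcriterion}). Theorem~\ref{thmmaxsplitting} then forces $\mathrm{dim}(p_{r-1}) \le 2^s + 2^{s-2}$, which finishes the proof. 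I expect the delicate part to be the iterated application of Lemma~\ref{leminductionlemma}: one must keep the norm-degree condition verified all the way down the tower, track how fast the bounds $m_i$ shrink, and squeeze out exactly the right arithmetic of the higher divisibility indices — that $2^{\mathfrak{d}_j(p)} \ge \witti{j}{p}$ and that the sequence $\mathfrak{d}_j(p)$ is non-decreasing — so that the total loss remains bounded by $\mathrm{dim}(p_{r-1}) - 2^s - 2^{s-2}$; this is exactly where Theorem~\ref{thmmaxsplitting}, applied to $p_{r-1}$, becomes indispensable.
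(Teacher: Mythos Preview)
Your proposal is correct and follows essentially the same approach as the paper's own proof: locate the first $r$ with $\mathrm{dim}(p_r)=2^s$, dispose of the quasi-Pfister case via Proposition~\ref{propqphresult}, then iterate Lemma~\ref{leminductionlemma} down the Knebusch tower, control the decrements $m_j-m_{j+1}$ using the divisibility indices and Theorem~\ref{thmdivisibilityindices}, and finish by bounding $\mathrm{dim}(p_{r-1})$ via Theorem~\ref{thmmaxsplitting}. Your write-up is in places slightly more explicit than the paper's (e.g.\ you spell out why such an $r$ exists and why $\mathrm{lndeg}(p_r)\ge s+1$), but the logical skeleton is identical.
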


An easy induction now yields our main result:

\begin{theorem} \label{mainmain} Let $p$ and $q$ be anisotropic quasilinear quadratic forms of dimension $\geq 2$ over $F$, let $s$ be the unique non-negative integer such that $2^s < \mathrm{dim}(p) \leq 2^{s+1}$, and let $k = \mathrm{dim}(q) - 2\witti{0}{q_{F(p)}}$. If $k \leq 2^{s-1}$, then
$$ \mathrm{dim}(q) = a2^{\mathrm{lndeg}(p)} + \epsilon $$
for some non-negative integer $a$ and integer $-k \leq \epsilon \leq k$. In particular, the quasilinear case of Conjecture \ref{mainconj} holds when $k \leq 2^{s-1}$. 
\begin{proof} We argue by induction on $\mathrm{dim}(p)$. If $\mathrm{dim}(p) \leq 2$, then the statement holds by Proposition \ref{propquasihyp}. Assume now that $\mathrm{dim}(p) \geq 3$. If $\mathrm{lndeg}(p) = s+1$, then $p$ is a quasi-Pfister neighbour (Lemma \ref{lemnormdegree}) and we can invoke Proposition \ref{PNprop}. We can therefore assume that $\mathrm{lndeg}(p) \geq s+2$ (again, see Lemma \ref{lemnormdegree}). In particular, we have
$$ k \leq 2^{s-1} < 2^{\mathrm{lndeg}(p) - 2}. $$
Thus, by Lemma \ref{leminductionlemma}, it suffices to prove the following: If $\varphi$ is an anisotropic quasilinear quadratic form over $L = F(p)$, and $k' = \mathrm{dim}(\varphi) - 2\witti{0}{\varphi_{L(p_1)}} < 2^{s-1}$ , then $\mathrm{dim}(\varphi) = b2^{\mathrm{lndeg}(p_1)} + \epsilon'$ for some non-negative integer $b$ and some integer $-k' \leq \epsilon' \leq k'$. Now, by the discussion of \S \ref{secknebusch}, we have $\mathrm{dim}(p_1) \geq 2^s$. If this inequality is strict, then we are done by the induction hypothesis. If $\mathrm{dim}(p_1) = 2^s$, the induction hypothesis at least tells us that the claim holds when $k' \leq 2^{s-2}$. But, by Proposition \ref{propkeyprop}, the claim then holds when
\begin{eqnarray*} k' &\leq & \mathrm{min}(2^{s-2} + 1 + \mathrm{dim}(p) - 2^{s-1} - 2^{s-3}, 2^{s-1}-1)\\
&=& \mathrm{min}(2^{s-2} + 1 + \mathrm{dim}(2^s) - 2^{s-1} - 2^{s-3}, 2^{s-1}-1) \\
&=& \mathrm{min}(2^{s-1} + 2^{s-3} + 1, 2^{s-1}-1)\\
&=& 2^{s-1} - 1, \end{eqnarray*}
and so the theorem is proved. \end{proof} \end{theorem}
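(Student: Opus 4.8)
The plan is to establish the (formally stronger) statement of Problem~\ref{refinedprob} directly, arguing by induction on $\mathrm{dim}(p)$ and using Lemma~\ref{leminductionlemma} to pass from the pair $(F,p)$ to the pair $(F(p),p_1)$ one splitting step at a time; the desired conclusion is then read off via the reformulation in Lemma~\ref{lemreformconj}. For the base case $\mathrm{dim}(p)=2$ one has $s=0$, so the hypothesis $k\leq 2^{s-1}$ forces $k=0$ and Proposition~\ref{propquasihyp} applies (here $\mathrm{lndeg}(p)=1$). So assume $\mathrm{dim}(p)\geq 3$, hence $s\geq 1$. If $\mathrm{lndeg}(p)=s+1$, then $p$ is a quasi-Pfister neighbour by Lemma~\ref{lemnormdegree} and Proposition~\ref{PNprop} finishes the job; otherwise $\mathrm{lndeg}(p)\geq s+2$, so $k\leq 2^{s-1}<2^{\mathrm{lndeg}(p)-2}$, and Lemma~\ref{leminductionlemma} (taken with $L=F$, $\psi=p$, $m=2^{s-1}$) reduces the problem to verifying the assertion for the pair $(F(p),p_1)$ when $k'\leq 2^{s-1}-1$.

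To verify that, I would note that $\mathrm{dim}(p_1)=\mathrm{dim}(p)-\witti{1}{p}$ with $1\leq\witti{1}{p}\leq\mathrm{dim}(p)-2^s$ (see \S\ref{secknebusch}), so $2^s\leq\mathrm{dim}(p_1)<\mathrm{dim}(p)$; let $s'$ be defined by $2^{s'}<\mathrm{dim}(p_1)\leq 2^{s'+1}$. If $\mathrm{dim}(p_1)>2^s$ then $s'\geq s$, whence $k'\leq 2^{s-1}-1<2^{s'-1}$ and the induction hypothesis (applicable since $\mathrm{dim}(p_1)<\mathrm{dim}(p)$) closes this case. The delicate case is $\mathrm{dim}(p_1)=2^s$, where $s'=s-1$ and induction covers only $k'\leq 2^{s-2}$; here I would invoke Proposition~\ref{propkeyprop} with its parameter ``$s$'' taken to be $s-1$ and ``$m$'' taken to be $2^{s-2}+1$, and a short computation shows the resulting range collapses to $\min\!\big(2^{s-1}+2^{s-3}+1,\,2^{s-1}-1\big)=2^{s-1}-1$, exactly what is required. (For $s\leq 2$ this last step is unnecessary, since then $2^{s-1}-1\leq 2^{s-2}$ and the induction hypothesis alone suffices.)

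The main obstacle is precisely this boundary case $\mathrm{dim}(p_1)=2^s$. Because $\mathrm{lndeg}(p_1)=\mathrm{lndeg}(p)-1$ (Lemma~\ref{lemndegtower}), each splitting step halves the $2$-power against which $\mathrm{dim}(q)$ is measured, so a naive induction loses a factor of $2$ in the permitted size of $k$ and one must genuinely recover it. That recovery is the content of Proposition~\ref{propkeyprop}, whose own proof leans on two nontrivial ingredients: Theorem~\ref{thmmaxsplitting}, which forces $\mathrm{dim}(p_{r-1})\leq 2^s+2^{s-2}$ at the last step of the Knebusch tower before dimension $2^s$ is reached (as $p_{r-1}$ has maximal splitting but is not a quasi-Pfister neighbour), and the bound $2^{\mathfrak{d}_j(p)}\geq\witti{j}{p}$ of Theorem~\ref{thmdivisibilityindices}(1), which makes the telescoping estimate on the higher isotropy indices work. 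The remaining effort is bookkeeping: tracking how the admissible bound on $k$ degrades through successive applications of Lemma~\ref{leminductionlemma} and checking that it still beats the threshold at the end. This is routine once Propositions~\ref{propqphresult} and~\ref{propkeyprop} are available, so I do not foresee further surprises.
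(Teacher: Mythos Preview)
Your proposal is correct and follows essentially the same route as the paper's proof: induction on $\mathrm{dim}(p)$, base case via Proposition~\ref{propquasihyp}, quasi-Pfister neighbour case via Proposition~\ref{PNprop}, reduction to $(F(p),p_1)$ via Lemma~\ref{leminductionlemma}, and the boundary case $\mathrm{dim}(p_1)=2^s$ handled by Proposition~\ref{propkeyprop}. Your treatment is in fact slightly cleaner in two places: you make explicit that the ``$s$'' in Proposition~\ref{propkeyprop} must be instantiated as $s-1$ (the paper's calculation has a typo writing $\mathrm{dim}(p)$ where $\mathrm{dim}(p_1)=2^s$ is meant), and you note that for $s\leq 2$ the induction hypothesis already covers $k'\leq 2^{s-1}-1$ so the appeal to Proposition~\ref{propkeyprop} is unnecessary there.
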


Feeding this back into Proposition \ref{propkeyprop} now gives the following additional result in the direction of our conjecture:

\begin{corollary} \label{cortomain} Let $p$ and $q$ be anisotropic quasilinear quadratic forms of dimension $\geq 2$ over $F$, let $s$ be the unique non-negative integer such that $2^s < \mathrm{dim}(p) \leq 2^{s+1}$, and let $k = \mathrm{dim}(q) - 2\witti{0}{q_{F(p)}}$. If $k \leq \mathrm{min}(\mathrm{dim}(p) - 2^{s-1} - 2^{s-2}+1, 2^s-1)$, then
$$ \mathrm{dim}(q) = a2^{\mathrm{lndeg}(p)} + \epsilon $$
for some non-negative integer $a$ and integer $-k \leq \epsilon \leq k$. In particular, the quasilinear case of Conjecture \ref{mainconj} holds when $k \leq \mathrm{min}(\mathrm{dim}(p) - 2^{s-1} - 2^{s-2}+1, 2^s-1)$.

\begin{proof} By Theorem \ref{mainmain}, the result holds if $k < 2^{s-1} + 1$. Since
$$ 2^{s-1} + 1 + \mathrm{dim}(p) -2^s - 2^{s-2} = \mathrm{dim}(p) - 2^{s-1} - 2^{s-2}+1, $$
the claim now follows from Proposition \ref{propkeyprop}.\end{proof} \end{corollary}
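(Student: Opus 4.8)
The plan is to obtain the corollary as a formal consequence of the two main technical results already established, namely Theorem \ref{mainmain} and Proposition \ref{propkeyprop}, by using the former as the input hypothesis of the latter. First I would note that Theorem \ref{mainmain} asserts precisely that Problem \ref{refinedprob} has a positive answer whenever $k \leq 2^{s-1}$; since $k$ is an integer, this is the same as asserting a positive answer whenever $k < 2^{s-1}+1$. Hence the hypothesis of Proposition \ref{propkeyprop} is satisfied with $m = 2^{s-1}+1$. One should check that $m < 2^s$, which holds exactly when $s \geq 2$; the cases $s \leq 1$ carry no content beyond Theorem \ref{mainmain} itself, since there the stated bound on $k$ collapses to $k \leq 1$ or $k = 0$, already covered by Theorem \ref{mainmain} and Proposition \ref{propquasihyp}.

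Next I would substitute $m = 2^{s-1}+1$ into the conclusion of Proposition \ref{propkeyprop}, which then tells us that Problem \ref{refinedprob} has a positive answer whenever $2^s < \mathrm{dim}(p) \leq 2^{s+1}$ and
$$ k \leq \mathrm{min}\big(m + \mathrm{dim}(p) - 2^s - 2^{s-2}, 2^s - 1\big). $$
It then remains only to record the elementary identity
$$ m + \mathrm{dim}(p) - 2^s - 2^{s-2} = (2^{s-1}+1) + \mathrm{dim}(p) - 2^s - 2^{s-2} = \mathrm{dim}(p) - 2^{s-1} - 2^{s-2} + 1, $$
which turns the displayed bound on $k$ into exactly the hypothesis of the corollary. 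The final assertion about Conjecture \ref{mainconj} is then immediate, since (as observed just after the statement of Problem \ref{refinedprob}) a positive answer to Problem \ref{refinedprob} for the pair $(p,q)$ forces Conjecture \ref{mainconj} to hold for $(p,q)$.

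I do not anticipate any genuine obstacle: the entire weight of the argument sits in Theorem \ref{mainmain} (which settles the range $k \leq 2^{s-1}$) and in Proposition \ref{propkeyprop} (whose proof is the real mechanism, pushing a bound of the form $k < m$ up to $k \leq m + \mathrm{dim}(p) - 2^s - 2^{s-2}$ by iterating Lemma \ref{leminductionlemma} along the Knebusch splitting tower of $p$ and invoking Theorems \ref{thmmaxsplitting} and \ref{thmdivisibilityindices} to bound $\mathrm{dim}(p_{r-1})$). The one place meriting care is to make sure that the phrase ``Problem \ref{refinedprob} has a positive answer whenever $k < m$'' is read as a statement quantified over all admissible pairs of forms, so that the conclusion of Theorem \ref{mainmain} really does supply the hypothesis demanded by Proposition \ref{propkeyprop}; granting this, the corollary follows in two lines.
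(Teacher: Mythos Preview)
Your proposal is correct and follows essentially the same route as the paper's proof: invoke Theorem~\ref{mainmain} to cover $k \leq 2^{s-1}$ (i.e., $k < 2^{s-1}+1$), then feed $m = 2^{s-1}+1$ into Proposition~\ref{propkeyprop} and simplify the resulting bound via the displayed arithmetic identity. Your additional remarks on the constraint $m < 2^s$, the degenerate cases $s \leq 1$, and the universal quantification in the hypothesis of Proposition~\ref{propkeyprop} are valid checks that the paper leaves implicit.
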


In particular, we get that the quasilinear case of Conjecture \ref{mainconj} holds when $\mathrm{dim}(p)$ is close enough to $2^{s+1}$:

\begin{corollary} \label{corhighdimresult}  The quasilinear case of Conjecture \ref{mainconj} holds when 
$$ 2^{s+1}-2^{s-2} - 3 \leq \mathrm{dim}(p) \leq 2^{s+1}. $$
\begin{proof} It is enough to show that the statement of the conjecture holds when $k \leq 2^s - 2$ (Remark \ref{introrems} (2)). But if $\mathrm{dim}(p) \geq 2^{s+1} - 2^{s-2} - 3$, then
$$ \mathrm{dim}(p) - 2^{s-2} - 2^{s-2} + 1 \geq 2^{s} - 2, $$
and since $\mathrm{lndeg}(p) \geq s+1$, the claim follows from Corollary \ref{cortomain}. \end{proof} \end{corollary}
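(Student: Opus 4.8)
The plan is to obtain this immediately from Corollary \ref{cortomain} via a short numerical check. First I would invoke Remark \ref{introrems} (2): the assertion of Conjecture \ref{mainconj} is automatic once $k \geq 2^s - 1$, so it suffices to establish it under the additional hypothesis $k \leq 2^s - 2$. Now Corollary \ref{cortomain} already guarantees that the quasilinear case of the conjecture holds whenever
$$ k \leq \mathrm{min}\big(\mathrm{dim}(p) - 2^{s-1} - 2^{s-2} + 1, \; 2^s - 1\big); $$
since trivially $2^s - 2 \leq 2^s - 1$, the entire problem reduces to verifying the single inequality $\mathrm{dim}(p) - 2^{s-1} - 2^{s-2} + 1 \geq 2^s - 2$.

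This final step is an elementary manipulation with powers of $2$: using $2 \cdot 2^{s-2} = 2^{s-1}$ one computes $2^s + 2^{s-1} + 2^{s-2} = 2^{s+1} - 2^{s-2}$, so the inequality in question is equivalent to $\mathrm{dim}(p) \geq 2^{s+1} - 2^{s-2} - 3$, which is exactly the hypothesis of the corollary. Hence every admissible value of $k$ falls within the range covered by Corollary \ref{cortomain}, and the proof is complete.

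I do not expect any genuine obstacle here: all the substance is already contained in Corollary \ref{cortomain} (and, through it, in Theorem \ref{mainmain} and Proposition \ref{propkeyprop}), and the present statement merely repackages that result for $\mathrm{dim}(p)$ close to $2^{s+1}$. The only point demanding a modicum of care is keeping the bookkeeping with the $2$-powers correct; one should also note in passing that for small $s$ the hypothesis is either vacuous or reduces the claim to a case already settled by Proposition \ref{propquasihyp}.
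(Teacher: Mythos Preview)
Your proposal is correct and follows exactly the same route as the paper: reduce to $k \leq 2^s - 2$ via Remark \ref{introrems} (2), then invoke Corollary \ref{cortomain} after the numerical check that $\mathrm{dim}(p) - 2^{s-1} - 2^{s-2} + 1 \geq 2^s - 2$ is equivalent to the stated lower bound on $\mathrm{dim}(p)$. Your arithmetic is in fact cleaner than the paper's (which contains a harmless typo, writing $2^{s-2}$ in place of $2^{s-1}$ in the displayed inequality).
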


\begin{corollary} \label{cordimleq8} The quasilinear case of Conjecture \ref{mainconj} holds when $\mathrm{dim}(p) \leq 8$. 
\begin{proof} The dimension condition of the previous corollary clearly holds if $s \leq 3$. \end{proof} \end{corollary}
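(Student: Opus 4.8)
The plan is to reduce everything to Corollary \ref{corhighdimresult}. The key observation is that an anisotropic quasilinear form $p$ with $2 \leq \mathrm{dim}(p) \leq 8$ has $s \leq 2$, since $s = 3$ would force $\mathrm{dim}(p) \geq 9$. So there are only three cases to treat: $\mathrm{dim}(p) = 2$ (then $s=0$), $\mathrm{dim}(p) \in \{3,4\}$ (then $s=1$), and $\mathrm{dim}(p) \in \{5,6,7,8\}$ (then $s=2$). In the last case I would simply check that the hypothesis of Corollary \ref{corhighdimresult} holds: the upper bound $\mathrm{dim}(p) \leq 2^{s+1} = 8$ is immediate from the definition of $s$, and the lower bound $\mathrm{dim}(p) \geq 2^{s+1} - 2^{s-2} - 3 = 8 - 1 - 3 = 4$ holds because $\mathrm{dim}(p) > 2^s = 4$. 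Thus Corollary \ref{corhighdimresult} settles the range $5 \leq \mathrm{dim}(p) \leq 8$.

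For the genuinely small cases $\mathrm{dim}(p) \in \{2,3,4\}$ (i.e. $s \leq 1$) it is cleanest to argue directly rather than to push the bounds of Corollaries \ref{cortomain} and \ref{corhighdimresult} into a range of $s$ where the fractional powers $2^{s-1}, 2^{s-2}$ appearing in them are no longer meaningful. If $s = 0$, then $2^s - 1 = 0$ and the assertion is vacuous by Remark \ref{introrems} (2), since $k \geq 0$ always holds. If $s = 1$, then $2^s - 1 = 1$, so Remark \ref{introrems} (2) again makes the assertion vacuous once $k \geq 1$, and when $k = 0$ it is immediate from Proposition \ref{propquasihyp}, which forces $\mathrm{dim}(q)$ to be divisible by $2^{\mathrm{lndeg}(p)}$. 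Incidentally, the case $s = 2$ can also be handled purely by Theorem \ref{mainmain}: the assertion is vacuous for $k \geq 2^s - 1 = 3$, and for $k \leq 2 = 2^{s-1}$ it is exactly the content of that theorem.

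I do not anticipate any real obstacle here: the whole proof is a finite case-check, and the only point deserving care is to route the low-dimensional cases $\mathrm{dim}(p) \leq 4$ through the triviality of Remark \ref{introrems} (2) (or Proposition \ref{propquasihyp}) rather than through the corollaries, whose stated dimension bounds are designed for, and genuinely needed only when, $s \geq 2$.
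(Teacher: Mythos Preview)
Your proof is correct and follows essentially the same route as the paper's one-line argument, which just observes that the dimension hypothesis of Corollary~\ref{corhighdimresult} is satisfied for all relevant $s$. The only difference is cosmetic: you treat $s \leq 1$ separately via Remark~\ref{introrems}~(2) and Proposition~\ref{propquasihyp} so as to avoid reading $2^{s-2}$ as a fraction, whereas the paper tacitly applies Corollary~\ref{corhighdimresult} uniformly.
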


\section{Beyond the main result} \label{secbeyond}

The quasilinear case of Conjecture \ref{mainconj} remains open in the situation where $2^{s-1}<k<2^s$. Corollaries \ref{cortomain}, \ref{corhighdimresult} and \ref{cordimleq8} give some partial results in this direction. In this last section, we show that one can eliminate infinitely bad values of $\mathrm{dim}(q)$ without imposing any assumption on $k$. More precisely, we prove the following:

\begin{theorem} \label{thmadditional} The quasilinear case of Conjecture \ref{mainconj} holds if 
$$2^N \leq \mathrm{dim}(q) \leq 2^N + 2^{s+1} $$
for some positive integer $N$.  \end{theorem}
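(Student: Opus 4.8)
The plan is to exploit the extreme case $k = 0$ (Proposition \ref{propquasihyp}), together with the inductive machinery of Lemma \ref{leminductionlemma}, to show that the dyadic window $2^N \leq \mathrm{dim}(q) \leq 2^N + 2^{s+1}$ forces $\mathrm{dim}(q)$ into the set of values permitted by the conjecture, regardless of the size of $k$. First I would dispose of the trivial cases: if $q_{F(p)}$ is anisotropic, or if $\mathrm{dim}(p) \leq 2$, there is nothing to prove. One may also assume $p$ is not a quasi-Pfister neighbour, since Proposition \ref{PNprop} already settles that case; hence by Lemma \ref{lemnormdegree} we have $\mathrm{lndeg}(p) \geq s+2$. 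The heart of the matter is the observation that the hyperbolic-type bound of Proposition \ref{propquasihyp}, via Theorem \ref{highersubformthm} and Lemma \ref{RefinedDlem}, constrains $\mathrm{dim}(q)$ quite rigidly near a power of $2$: after passing to a purely transcendental extension we obtain $\tau$ with $\mathrm{dim}(\tau) = \witti{0}{q_{F(p)}}$ and $\anispart{(\tau \otimes p_1)} \subset \anispart{(q_{F(p)})}$, and I would track how $\mathrm{dim}(q) = \mathrm{dim}(\tau) + \mathrm{dim}(\anispart{(q_{F(p)})})$ sits relative to $2^N$.

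The key point I would aim to establish is that when $\mathrm{dim}(q)$ is within $2^{s+1}$ of $2^N$ from above, the value $a$ appearing in Problem \ref{refinedprob} can be forced to be the one making $a 2^{\mathrm{lndeg}(p)}$ land exactly in the required interval $[\witti{0}{q_{F(p)}}, \mathrm{dim}(q) - \witti{0}{q_{F(p)}}]$ of Lemma \ref{lemreformconj}. Concretely: since $\witti{0}{q_{F(p)}} = (\mathrm{dim}(q) - k)/2$ and $\mathrm{dim}(q) - \witti{0}{q_{F(p)}} = (\mathrm{dim}(q)+k)/2$, the interval has midpoint $\mathrm{dim}(q)/2$ and half-width $k/2$. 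The condition $2^N \leq \mathrm{dim}(q) \leq 2^N + 2^{s+1}$ means $2^{N-1} \leq \mathrm{dim}(q)/2 \leq 2^{N-1} + 2^s$; I would argue (using $\mathrm{dim}(\anispart{(\tau \otimes p_1)})$ being divisible by $2^{\mathfrak{d}_1(p)}$ and the divisibility statements of Lemmas \ref{lemdivisibilityofd}, \ref{lemdivindexineq}, plus Theorem \ref{thmdivisibilityindices}) that the candidate value landing in $S$ of Lemma \ref{lemtensorinclusion}-type reasoning is a multiple of $2^{\mathrm{lndeg}(p_1)} = 2^{\mathrm{lndeg}(p)-1}$ lying between $\witti{0}{q_{F(p)}}$ and $\mathrm{dim}(\anispart{(q_{F(p)})})$, after which Lemma \ref{lemreformconj} closes the argument. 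The dyadic hypothesis enters precisely to guarantee that such a multiple of $2^{\mathrm{lndeg}(p)-1}$ actually exists in the window: one needs $2^{\mathrm{lndeg}(p)-1} \mid 2^N$ or at least that $2^N$ itself is the relevant multiple, which is why the interval is pinned to $2^N$ rather than an arbitrary integer. I expect the cleanest route is an induction on $\mathrm{dim}(p)$ parallel to the proof of Theorem \ref{mainmain}, reducing via Lemma \ref{leminductionlemma} from $(F,p)$ to $(F(p), p_1)$ while preserving the hypothesis $2^N \leq \mathrm{dim}(q) \leq 2^N + 2^{s+1}$ — noting that passing $p \to p_1$ only decreases $s$ (since $\mathrm{dim}(p_1) \leq 2^s$) so the window $[2^N, 2^N + 2^{s+1}]$ is not lost — and invoking the base cases above once $p_1$ becomes a quasi-Pfister neighbour or $1$-dimensional.

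The main obstacle, as I see it, is controlling the interplay between the two halves of Lemma \ref{RefinedDlem}: the splitting $l = i + (l-i)$ of the "defect budget" between $\tau$ and $\varphi = \anispart{(\tau \otimes p_1)}$ must be shown to be compatible with $\mathrm{dim}(q)$ lying in the dyadic window, and in the genuinely bad sub-case $\epsilon_1 > 0, \epsilon_2 < 0$ of Lemma \ref{leminductionlemma}'s Case 3 the argument there used $l < 2^{\mathrm{lndeg}(p)-2}$, which is no longer available when $k$ is large. The resolution I would pursue is that the dyadic constraint on $\mathrm{dim}(q)$ replaces the smallness of $l$: even if $l$ is large, the requirement that $\mathrm{dim}(\tau) + \mathrm{dim}(\varphi) = \mathrm{dim}(q) \in [2^N, 2^N + 2^{s+1}]$ together with both $\mathrm{dim}(\tau)$ and $\mathrm{dim}(\varphi)$ being near-multiples of $2^{\mathrm{lndeg}(p_1)}$ forces $c - b$ to be small enough to contradict $2l \geq (c-b)2^{\mathrm{lndeg}(p_1)}$ would-be inequality, or else directly exhibits the desired $a$. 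Verifying this arithmetic carefully — in particular ruling out that $2^N$ sits strictly between two consecutive multiples of $2^{\mathrm{lndeg}(p)-1}$ in a way that defeats the argument, which forces an examination of whether $\mathrm{lndeg}(p) - 1 \leq N$ — is where the real work lies, and I would handle it by a case split on the size of $N$ relative to $\mathrm{lndeg}(p)$, using Theorem \ref{thmmaxsplitting} exactly as in Proposition \ref{propkeyprop} to bound $\mathrm{dim}(p_{r-1})$ when needed.
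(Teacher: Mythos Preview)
Your proposal has a genuine gap: the inductive descent via Lemma \ref{leminductionlemma} cannot work here, because that lemma does not preserve the form $q$. In the reduction $(F,p) \rightarrow (F(p), p_1)$, the role of $q$ is taken over by the auxiliary forms $\tau$ and $\varphi = \anispart{(\tau\otimes p_1)}$, and there is no reason why $\mathrm{dim}(\tau)$ or $\mathrm{dim}(\varphi)$ should lie in any window of the shape $[2^{N'}, 2^{N'} + 2^{s'}]$. Your claim that ``the window $[2^N, 2^N + 2^{s+1}]$ is not lost'' is therefore unfounded: the window is a hypothesis on $q$, and $q$ has been replaced. You are also aiming at the stronger conclusion of Problem \ref{refinedprob} (a multiple of $2^{\mathrm{lndeg}(p)-1}$ in the isotropy interval), whereas the theorem only asserts Conjecture \ref{mainconj} (a multiple of $2^s$); since $\mathrm{lndeg}(p)-1$ can exceed $N$, the stronger goal need not hold under the dyadic hypothesis alone, so your case split on ``$\mathrm{lndeg}(p)-1 \leq N$'' would have nowhere to go in the bad case.

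The paper's proof takes a different route and uses an ingredient you never invoke: the Separation Theorem (Lemma \ref{lemhoffmann}). First one reduces, via an $F$-place argument (Lemma \ref{lem2s+1}), to the extremal case $\mathrm{dim}(p) = 2^s+1$, so that $\mathrm{dim}(p_1) = 2^s$ exactly. After obtaining $\tau$ and $\varphi$ from Theorem \ref{highersubformthm} as you suggest, the decisive move is to pass to $L = F(p)(\tau)$ --- the function field of $\tau$, not of $p_1$ --- and apply Lemma \ref{RefinedDlem} with the roles swapped ($\sigma = p_1$, $\nu = \tau$). Since $\mathrm{dim}(\tau) = \witti{0}{q_{F(p)}} > 2^{N-1} \geq 2^s = \mathrm{dim}(p_1)$, the Separation Theorem forces $p_1$ to stay anisotropic over $L$, so $d\big((p_1)_L\big) = 2^s$; Lemma \ref{RefinedDlem} then yields $\witti{0}{\varphi_L} \geq 2^s$. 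A second application of Lemma \ref{lemhoffmann}, now using $\mathrm{dim}(\tau) > 2^{N-1}$, gives $\witti{0}{\varphi_L} \leq \mathrm{dim}(\varphi) - 2^{N-1}$, whence $\mathrm{dim}(\varphi) \geq 2^{N-1} + 2^s$. This integer is a multiple of $2^s$ lying in $[\witti{0}{q_{F(p)}}, \mathrm{dim}(q) - \witti{0}{q_{F(p)}}]$, which finishes the proof. None of this involves descending the Knebusch tower of $p$ or the arithmetic of Lemma \ref{leminductionlemma}'s Case 3.
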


The proof is similar to that of Theorem \ref{mainmain}, but we will now also use the fact that Conjecture \ref{mainconj} is known to be true when $\mathrm{dim}(q) \leq 2^s$. In other words, we will use (the quasilinear case of) the Separation Theorem (\cite{Hoffmann1},\cite{HoffmannLaghribi2}). The following lemma is a trivial extension of the latter result:

\begin{lemma} \label{lemhoffmann} Let $p$ and $q$ be anisotropic quasilinear quadratic forms over dimension $\geq 2$ over $F$ and let $s$ be the unique non-negative integer such that $2^s < \mathrm{dim}(p) \leq 2^{s+1}$. Then $\witti{0}{q_{F(p)}} \leq \mathrm{max}(0,\mathrm{dim}(q) - 2^s)$. 
\begin{proof} Let $r = \mathrm{max}(0,\mathrm{dim}(q) - 2^s)$ and let $\varphi$ be a codimension $r$ subform of $q$. To prove the lemma, it suffices to show that $\varphi_{F(p)}$ is anisotropic (see Lemma \ref{Dlem} (1) above). But $\mathrm{dim}(\varphi) \leq 2^s < \mathrm{dim}(p)$, so this holds by the Separation Theorem (\cite[Thm. 1.1]{HoffmannLaghribi2}). \end{proof} \end{lemma}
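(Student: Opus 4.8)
The plan is to reduce the statement directly to the quasilinear Separation Theorem of Hoffmann and Laghribi (\cite[Thm.~1.1]{HoffmannLaghribi2}) by passing to a subform of $q$. Put $r = \mathrm{max}(0,\mathrm{dim}(q) - 2^s)$, which is precisely the quantity on the right-hand side of the asserted inequality, and observe that $\mathrm{dim}(q) - r = \mathrm{min}(\mathrm{dim}(q),2^s) \leq 2^s < \mathrm{dim}(p)$. Choose any codimension-$r$ subform $\varphi \subset q$; then $\mathrm{dim}(\varphi) = \mathrm{dim}(q) - r < \mathrm{dim}(p)$, and $\varphi$ is anisotropic since $q$ is. The goal is to show that $\varphi$ stays anisotropic over $F(p)$ and then to transfer this to the desired bound on $\witti{0}{q_{F(p)}}$.

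For the first half: if $\mathrm{dim}(\varphi) \geq 2$, then as $\mathrm{dim}(\varphi) < \mathrm{dim}(p)$ the Separation Theorem applies verbatim and gives $\witti{0}{\varphi_{F(p)}} = 0$. The only other possibility is $\mathrm{dim}(\varphi) = 1$, which forces $2^s = 1$, hence $s = 0$ and $\mathrm{dim}(p) = 2$; but a nonzero one-dimensional form is anisotropic over every field extension, so again $\witti{0}{\varphi_{F(p)}} = 0$. Thus $\varphi_{F(p)}$ is anisotropic in all cases.

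For the second half, I would apply Lemma \ref{Dlem}~(1) over the field $F(p)$ to the chain $\varphi_{F(p)} \subset q_{F(p)}$ (so that the codimension $a$ there equals $r$): since $d(\varphi_{F(p)}) = \mathrm{dim}(\varphi)$ by anisotropy and $d(q_{F(p)}) = \mathrm{dim}(q) - 2\witti{0}{q_{F(p)}}$, the resulting inequality $d(\varphi_{F(p)}) \leq d(q_{F(p)}) + r$ rearranges to $2\witti{0}{q_{F(p)}} \leq (\mathrm{dim}(q) - \mathrm{dim}(\varphi)) + r = 2r$, i.e. $\witti{0}{q_{F(p)}} \leq r$, as required. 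Equivalently, one can argue by hand: intersecting the totally isotropic subspace of $V_{q_{F(p)}}$ with the codimension-$r$ subspace $V_{\varphi_{F(p)}}$ yields an isotropic subspace of $\varphi_{F(p)}$, which must be $0$, whence $\witti{0}{q_{F(p)}} \leq r$.

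Since the whole argument is a short reduction, there is essentially no obstacle to overcome: the single nontrivial ingredient is the Separation Theorem, which is used as a black box. The only point deserving a moment's care is the degenerate case $\mathrm{dim}(\varphi) = 1$, where one should not invoke the Separation Theorem (whose hypotheses require dimension $\geq 2$) but instead note the triviality that one-dimensional anisotropic forms never become isotropic.
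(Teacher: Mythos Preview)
Your proof is correct and follows essentially the same approach as the paper's: take a codimension-$r$ subform $\varphi$ of $q$, apply the Separation Theorem to see that $\varphi_{F(p)}$ is anisotropic, and deduce the bound via Lemma~\ref{Dlem}~(1). You are simply more explicit than the paper in spelling out the application of Lemma~\ref{Dlem}~(1) and in handling the boundary case $\mathrm{dim}(\varphi)=1$.
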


As a consequence, we have:

\begin{lemma} \label{lemtrueforlow} The quasilinear case of Conjecture \ref{mainconj} holds if $\mathrm{dim}(q) \leq 2^{s+1} +k$.
\begin{proof} It suffices to consider the case where $\mathrm{dim}(q) < 2^{s+1} - k$. In this case, however, we have
\begin{eqnarray*} \witti{0}{q_{F(p)}} &=&  \frac{1}{2}\left(\mathrm{dim}(q) - k\right)\\
&>& \frac{1}{2}\left(2\mathrm{dim}(q) -2^{s+1}\right)\\
&=& \mathrm{dim}(q) -2^s,\end{eqnarray*}
so that $\witti{0}{q_{F(p)}} = 0$ by Lemma \ref{lemhoffmann}. Since the statement of the conjecture holds vacuously in this case, the lemma is proved. \end{proof} \end{lemma}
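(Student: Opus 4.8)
The statement to prove is that, for anisotropic quasilinear $p,q$ of dimension $\geq 2$ with $k = \mathrm{dim}(q) - 2\witti{0}{q_{F(p)}}$, the conclusion of Conjecture \ref{mainconj} holds whenever $\mathrm{dim}(q) \leq 2^{s+1} + k$. The plan is to isolate the only non-formal situation and dispatch it with the Separation Theorem in the packaged form of Lemma \ref{lemhoffmann}.

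First I would dispose of the ``large'' range. Since $k \geq 0$ (Lemma \ref{lempositived}), the interval $[\,2^{s+1} - k,\; 2^{s+1} + k\,]$ is non-empty, and if $\mathrm{dim}(q)$ lies in it we may simply take $a = 1$ and $\epsilon = \mathrm{dim}(q) - 2^{s+1}$: then $-k \leq \epsilon \leq k$ by the defining inequalities, so the conclusion holds. Combined with the hypothesis $\mathrm{dim}(q) \leq 2^{s+1} + k$, this reduces us to the single remaining case $\mathrm{dim}(q) < 2^{s+1} - k$, which is exactly the reduction announced in the statement of the lemma.

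In that case I would argue that $q$ actually stays anisotropic over $F(p)$. Rewriting the definition of $k$ as $\witti{0}{q_{F(p)}} = \tfrac12\bigl(\mathrm{dim}(q) - k\bigr)$ and feeding in the inequality $\mathrm{dim}(q) < 2^{s+1} - k$ (equivalently $2\mathrm{dim}(q) - 2^{s+1} < \mathrm{dim}(q) - k$), one gets $\witti{0}{q_{F(p)}} > \mathrm{dim}(q) - 2^s$. But Lemma \ref{lemhoffmann} gives $\witti{0}{q_{F(p)}} \leq \max\bigl(0,\, \mathrm{dim}(q) - 2^s\bigr)$. If $\mathrm{dim}(q) - 2^s \geq 0$ the two bounds are contradictory, so necessarily $\mathrm{dim}(q) < 2^s$ and $\witti{0}{q_{F(p)}} = 0$. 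Then $k = \mathrm{dim}(q)$, and the conclusion of Conjecture \ref{mainconj} holds vacuously with $a = 0$ and $\epsilon = \mathrm{dim}(q) = k$. This completes the argument.

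There is no real obstacle here: all the weight is carried by Lemma \ref{lemhoffmann} (i.e.\ the quasilinear Separation Theorem of \cite{HoffmannLaghribi2}, slightly reformulated), and the rest is a careful bookkeeping of strict versus non-strict inequalities together with the trivial choice $a \in \{0,1\}$. The only point to watch is that the case split $\mathrm{dim}(q) \geq 2^{s+1} - k$ versus $\mathrm{dim}(q) < 2^{s+1} - k$ is exhaustive under the hypothesis, and that in the second branch the derived strict inequality $\witti{0}{q_{F(p)}} > \mathrm{dim}(q) - 2^s$ forces the $\max$ in Lemma \ref{lemhoffmann} to be attained at $0$ rather than at $\mathrm{dim}(q) - 2^s$.
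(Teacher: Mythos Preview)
Your proof is correct and follows the same approach as the paper's own argument: reduce to the range $\mathrm{dim}(q) < 2^{s+1} - k$, then use Lemma \ref{lemhoffmann} to force $\witti{0}{q_{F(p)}} = 0$. You have simply made explicit the two steps the paper leaves implicit, namely why the reduction is valid (take $a=1$) and why the strict inequality combined with Lemma \ref{lemhoffmann} forces the isotropy index to vanish.
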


We also need the following fact which follows from standard specialization arguments:

\begin{lemma} \label{lem2s+1} To prove Conjecture \ref{mainconj}, it suffices to treat the case where $\mathrm{dim}(p) = 2^s + 1$. 
\begin{proof} Let $\sigma \subset p$ be a subform of dimension $2^s + 1$. We claim that the substitution $p \rightarrow \sigma$ does not increase the value of $k$. In other words, we claim that $\witti{0}{q_{F(\sigma)}} \geq \witti{0}{q_{F(p)}}$. By \cite[Lem. 3.4]{Scully0}, it suffices to show that there exists an $F$-place from $F(p)$ to $F(\sigma)$. Let $X_{\sigma}$ and $X_{p}$ be the projective quadrics defined by $\sigma$ and $p$, respectively. The inclusion $\sigma \subset p$ gives a regular embedding $X_{\sigma} \hookrightarrow X_p$. The quadric $X_p$ is then regular at the generic point of $X_\sigma$. The existence of the desired $F$-place then follows from \cite[Lem. A.4]{Scully0}. \end{proof} \end{lemma}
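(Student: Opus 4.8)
The plan is to replace $p$ by a subform of dimension exactly $2^s+1$ and argue that this substitution can only enlarge (or preserve) the integer $k$, so that the conjecture for the smaller form implies it for $p$. First I would fix a pair $(p,q)$ as in Conjecture \ref{mainconj} and choose a subform $\sigma \subset p$ with $\mathrm{dim}(\sigma) = 2^s + 1$; such a $\sigma$ exists because $\mathrm{dim}(p) > 2^s$, and it is again anisotropic of dimension $\geq 2$. Since $2^s < 2^s + 1 \leq 2^{s+1}$, the non-negative integer attached to $\sigma$ in the statement of the conjecture is again $s$, so the target identity $\mathrm{dim}(q) = a2^{s+1} + \epsilon$ is literally unchanged. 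Writing $k = \mathrm{dim}(q) - 2\witti{0}{q_{F(p)}}$ and $k' = \mathrm{dim}(q) - 2\witti{0}{q_{F(\sigma)}}$, the conclusion of the conjecture for $(\sigma, q)$ will imply the conclusion for $(p,q)$ provided $k' \leq k$, that is, provided
$$ \witti{0}{q_{F(\sigma)}} \geq \witti{0}{q_{F(p)}}. $$
So the whole lemma reduces to establishing this single inequality.

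To do so I would appeal to the standard behaviour of isotropy indices under specialization: by \cite[Lem. 3.4]{Scully0}, it suffices to produce an $F$-place from $F(p)$ to $F(\sigma)$, since the isotropy index of $q$ cannot decrease along such a place. This part is purely formal and uses nothing special about $p$, $q$ or $\sigma$.

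The remaining task --- constructing the $F$-place --- is where the geometric content sits. Choosing coordinates adapted to the inclusion $\sigma \subset p$, the quadric $X_\sigma$ is the intersection of $X_p$ with a linear subspace $\Lambda$ of the ambient projective space of codimension $\mathrm{dim}(p) - \mathrm{dim}(\sigma)$; this exhibits $X_\sigma \hookrightarrow X_p$ as a closed, indeed regular, embedding. I would then show that $X_p$ is regular at the generic point $\eta$ of $X_\sigma$. The key points are: (i) because $\sigma$ is anisotropic of dimension $\geq 2$, the scheme-theoretic intersection $X_p \cap \Lambda$ coincides with the integral quadric $X_\sigma$, hence is reduced, so in $\mathcal{O}_{X_p, \eta}$ the maximal ideal is generated by the $\mathrm{dim}(p) - \mathrm{dim}(\sigma)$ linear forms cutting out $\Lambda$; and (ii) $X_p$ is an integral hypersurface, hence equidimensional and catenary, so $\dim \mathcal{O}_{X_p, \eta} = \dim X_p - \dim X_\sigma = \mathrm{dim}(p) - \mathrm{dim}(\sigma)$. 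Matching the number of generators with the Krull dimension shows $\mathcal{O}_{X_p, \eta}$ is a regular local ring. Granting this, \cite[Lem. A.4]{Scully0} converts regularity of $X_p$ at $\eta$ into the desired $F$-place from $F(p)$ to $F(\sigma)$, completing the argument.

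I expect the only genuinely delicate point to be the regularity statement in the last paragraph --- specifically, the identification of the scheme-theoretic linear section with $X_\sigma$ (so that it is reduced, which is precisely where anisotropy of $\sigma$ enters) together with the verification that the codimension of $X_\sigma$ in $X_p$ equals the number of cutting equations. Everything else is a routine chain of reductions.
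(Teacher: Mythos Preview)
Your argument is essentially the paper's own proof: pick a $(2^s+1)$-dimensional subform $\sigma \subset p$, reduce to $\witti{0}{q_{F(\sigma)}} \geq \witti{0}{q_{F(p)}}$, and obtain this via \cite[Lem.~3.4]{Scully0} and \cite[Lem.~A.4]{Scully0} once $X_p$ is shown to be regular at the generic point of $X_\sigma$ --- a fact the paper simply asserts, whereas your paragraph (i)--(ii) supplies the linear-section justification. One wording slip to fix: your opening sentence says the substitution $p \to \sigma$ ``can only enlarge (or preserve)'' $k$, but what you actually need and subsequently prove is the opposite inequality $k' \leq k$; the logic in the body is correct, only the introductory phrasing is reversed.
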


We are now ready to give the proof of Theorem \ref{thmadditional};

\begin{proof}[Proof of Theorem \ref{thmadditional}] By Lemmas \ref{lem2s+1} and \ref{lemtrueforlow}, we can assume that
\begin{enumerate} \item $\mathrm{dim}(p) = 2^s + 1$, and
\item $\mathrm{dim}(q) > 2^{s+1} + k$. 
\end{enumerate}
By (2), we then have that $N \geq s+1$. If $\mathrm{dim}(q) \leq 2^N + k$, there is nothing to prove, so suppose now that $2^N + k <\mathrm{dim}(q) \leq 2^N + 2^{s+1}$. By Lemma \ref{lempositived} and the definition of $k$, we then have that
$$ 2^s \leq 2^{N-1} < \witti{0}{q_{F(p)}} \leq 2^{N-1} + 2^s. $$ 

In particular, $q_{F(p)}$ is isotropic. Thus, by Theorem \ref{highersubformthm} and Remark \ref{remsrefinedconj} (2), we can assume that there exists an anisotropic form $\tau$ over $F(p)$ such that $\mathrm{dim}(\tau) = \witti{0}{q_{F(p)}}$ and $\anispart{(\tau \otimes p_1)} \subset \anispart{(q_{F(p)})}$. Let $\varphi = \anispart{(\tau \otimes p_1)}$. As in the proof of Lemma \ref{leminductionlemma}, we then have
$$ \witti{0}{q_{F(p)}} \leq \mathrm{dim}(\varphi) \leq \mathrm{dim}(q) - \witti{0}{q_{F(p)}}.$$
Since $\witti{0}{q_{F(p)}} \leq 2^{N-1} + 2^s$, it follows from Lemma \ref{lemreformconj} that it will be sufficient to show that $\mathrm{dim}(\varphi) \geq 2^{N-1} + 2^s$. Similar to the arguments of \S \ref{secmainresults}, the main point is to observe that $\varphi$ becomes isotropic to a considerable extent over the function field of $\tau$. More precisely, let $L = F(p)(\tau)$. As $\mathrm{dim}(p) = 2^s + 1$, we have $\mathrm{dim}(p_1) = 2^s$. Since $\mathrm{dim}(\tau) = \witti{0}{q_{F(p)}}>2^s$, the Separation Theorem (see Lemma \ref{lemhoffmann} above) implies that $p_1$ remains anisotropic over $L$. In particular, $d\big((p_1)_{L}\big) = 2^s$. By Lemma \ref{RefinedDlem}, we therefore have that
$$ d(\varphi_{L}) \leq \mathrm{dim}(\varphi) - 2^s - 2^s = \mathrm{dim}(\varphi)-2^{s+1}.$$
Rearranging, we get
\begin{eqnarray*} \witti{0}{\varphi_{L}} &=& \frac{1}{2}\left(\mathrm{dim}(\varphi) - d(\varphi_{L})\right)\\
 &\geq & \frac{1}{2}(2^{s+1})\\
 &=& 2^s. \end{eqnarray*}
Now, since $\mathrm{dim}(q) > 2^N + k$, we have 
$$ \mathrm{dim}(\tau) = \witti{0}{q_{F(p)}} = \frac{1}{2} \left(\mathrm{dim}(q) - k\right) > \frac{1}{2}(2^N) = 2^{N-1}. $$
By Lemma \ref{lemhoffmann}, it follows that 
$$ \witti{0}{\varphi_{L}} \leq \mathrm{max}(0, \mathrm{dim}(\varphi) - 2^{N-1}). $$
Since $\witti{0}{\varphi_{L}} \geq 2^s$, this implies that $\mathrm{dim}(\varphi) \geq 2^{N-1} + 2^s$, as desired. \end{proof}

\begin{corollary} The quasilinear case of Conjecture \ref{mainconj} is true when $\mathrm{dim}(q) \leq 2^{s+2} + 2^{s+1}$. 
\begin{proof} In this range, we can write $\mathrm{dim}(q) =  2^N + a$ for some non-negative integer $N$ and some integer $a \leq 2^{s+1}$.\end{proof} \end{corollary}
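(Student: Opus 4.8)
The plan is to reduce everything to Theorem \ref{thmadditional}, which already settles the quasilinear case of Conjecture \ref{mainconj} whenever $2^N \leq \mathrm{dim}(q) \leq 2^N + 2^{s+1}$ for some positive integer $N$. So the corollary is purely a packaging statement: I need only verify that the range $2 \leq \mathrm{dim}(q) \leq 2^{s+2} + 2^{s+1}$ is covered by the union of the intervals $[2^N,\, 2^N + 2^{s+1}]$ as $N$ runs over the positive integers.

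Concretely, I would write $m = \mathrm{dim}(q)$ and let $N$ be the unique integer with $2^N \leq m < 2^{N+1}$. Since $q$ has dimension $\geq 2$ we get $N \geq 1$, which is exactly what Theorem \ref{thmadditional} requires of $N$. Setting $a = m - 2^N$, we automatically have $0 \leq a < 2^N$, so the only thing left to check is $a \leq 2^{s+1}$. Here I would split into the cases $m < 2^{s+2}$ and $m \geq 2^{s+2}$. In the first case $2^N \leq m < 2^{s+2}$ forces $N \leq s+1$, hence $a < 2^N \leq 2^{s+1}$. In the second case the hypothesis $m \leq 2^{s+2} + 2^{s+1} = 3\cdot 2^{s+1}$, together with $3\cdot 2^{s+1} < 2^{s+3}$, pins down $N = s+2$, and then $a = m - 2^{s+2} \leq 3\cdot 2^{s+1} - 2^{s+2} = 2^{s+1}$. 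Either way $\mathrm{dim}(q) = 2^N + a$ with $N$ positive and $0 \leq a \leq 2^{s+1}$, and Theorem \ref{thmadditional} applies directly.

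I do not anticipate any genuine obstacle here: all the mathematical content lives in Theorem \ref{thmadditional}, and this last corollary just records one convenient consequence. The only point requiring a little care is the boundary behaviour — ensuring that $N$ is genuinely positive (which follows from $\mathrm{dim}(q) \geq 2$, and is worth noting explicitly when $s = 0$) and that the two cases in the case split are exhaustive over the stated range — but the elementary estimate above handles this.
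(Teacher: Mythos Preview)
Your proposal is correct and takes essentially the same approach as the paper: both argue that any $\mathrm{dim}(q)$ in the stated range can be written as $2^N + a$ with $a \leq 2^{s+1}$, and then invoke Theorem \ref{thmadditional}. Your version simply spells out the case split and the boundary check (including the observation that $N \geq 1$) that the paper leaves implicit in its one-line proof.
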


\noindent {\bf Acknowledgements.} The author gratefully acknowledges the support of a PIMS postdoctoral fellowship and NSERC discovery grant during the period in which this research was carried out.

\bibliographystyle{alphaurl}

\end{document}